\documentclass[10pt]{amsart}

\title{Three-manifolds with constant vector curvature}

\author{Benjamin Schmidt}
\address{Department of Mathematics \\ 
                 Michigan State University \\
                 East Lansing, MI 48824}

\author{Jon Wolfson}

\thanks{The first author was partially supported by NSF grants DMS-0905906 and DMS-1207655.}

\date{\today}

\newtheorem{thm}{Theorem}[section]
\newtheorem{lem}[thm]{Lemma}
\newtheorem{cor}[thm]{Corollary}
\newtheorem{prop}[thm]{Proposition}

\theoremstyle{definition}
\newtheorem{rem}{Remark}[section]

\newtheorem{defn}{Definition}[section]

\numberwithin{equation}{section}

\newcommand{\e}{\varepsilon}
\newcommand{\g}{\gamma}

\renewcommand{\l}{\lambda}

\newcommand{\n}{\nabla}

\newcommand{\Sig}{\Sigma}
\renewcommand{\t}{\tau}

\def\Pb{\ifmmode{\Bbb P}\else{$\Bbb P$}\fi}
\def\Z{\ifmmode{\Bbb Z}\else{$\Bbb Z$}\fi}
\def\Q{\ifmmode{\Bbb Q}\else{$\Bbb Q$}\fi}
\def\C{\ifmmode{\Bbb C}\else{$\Bbb C$}\fi}
\def\R{\ifmmode{\Bbb R}\else{$\Bbb R$}\fi}
\def\H{\ifmmode{\Bbb H}\else{$\Bbb H$}\fi}
\def\S{\ifmmode{S^2}\else{$S^2$}\fi}

\def\tr{\operatorname{tr}}
\def\det{\operatorname{det}}

\def\cvc{\operatorname{cvc}}

\def\sec{\operatorname{sec}}

\def\Ric{\operatorname{Ric}}

\def\S{\mathcal S}
\def\I{\operatorname{Isom}}
\def\di{\operatorname{div}}
\def\vol{\operatorname{vol}}
\def\SO{\operatorname{SO}}
\def\SU{\operatorname{SU}}
\def\SL{\operatorname{SL}}

\usepackage{graphicx} 

\usepackage{amsmath,amsthm,amsfonts,amscd,flafter,epsf}
\usepackage{graphics}
\usepackage{epsfig}
\usepackage{psfrag}

\begin{document}

\maketitle

\begin{abstract}
A connected Riemannian manifold $M$ has {\it constant vector curvature $\e$}, denoted by $\cvc(\e)$, if every tangent vector $v \in TM$ lies in a 2-plane with sectional curvature $\e$.  When the sectional curvatures satisfy an additional bound $\sec \leq \e$ or $\sec \geq \e$, we say that $\e$ is an \textit{extremal} curvature.     

In this paper we study three-manifolds with constant vector curvature.  Our main results show that finite volume $\cvc(\e)$ three-manifolds with extremal curvature $\e$ are locally homogenous when $\e=-1$ and admit a local product decomposition when $\e=0$.  As an application, we deduce a hyperbolic rank-rigidity theorem.  
\end{abstract}

\setcounter{secnumdepth}{1}

\setcounter{section}{0}

\section{\bf Introduction}

 We introduce a new curvature condition called {\it constant vector curvature} and present a number of classification theorems in dimension three. 

A connected Riemannian manifold $M$ has {\it constant vector curvature $\e$}, denoted by $\cvc(\e)$, if every tangent vector $v \in TM$ lies in a 2-plane with sectional curvature $\e$; by scaling the metric on $M$, there is no loss in generality in assuming that  $\e = -1, 0$, or $1$.  When the sectional curvatures satisfy an additional bound $\sec \leq \e$ or $\sec \geq \e$, we say that $\e$ is an \textit{extremal} curvature.

Our definition is partly motivated by a consideration of results on geometric rank-rigidity \cite{ba, busp, con, co, ha, shspwi}.  A manifold $M$ has \textit{positive hyperbolic rank} if along each complete geodesic $\g:\R \rightarrow M$ there exists an orthogonal Jacobi field $J(t)$ with $\sec(\dot{\g},J)(t)\equiv -1$; the notions of \textit{positive Euclidean rank} and \textit{positive spherical rank} are analogously defined.  The condition of constant vector curvature simply replaces the condition ``along each complete geodesic'' with ``at each point''. It replaces a global condition on geodesics with a pointwise condition on vectors. 

It is immediate that surfaces with constant vector curvature have constant sectional curvatures. In Section \ref{section:homogeneous} we classify the complete, simply-connected, and homogeneous $\cvc(\epsilon)$ three-manifolds with extremal curvature $\epsilon$, a class of manifolds properly containing the eight three-dimensional Thurston geometries. In Section \ref{section:cvc(-1)} we prove the following:

\begin{thm}\label{maintheorem-case-1}
Suppose that $M$ is a finite volume $\cvc(-1)$ three-manifold.\\  If $\sec \leq -1$, then $M$ is real hyperbolic.  If $\sec\geq -1$ and $M$ is not real hyperbolic, then its universal covering is isometric to a left-invariant metric on one of the Lie groups $E(1,1)$ or $\widetilde{\SL(2,\mathbb{R})}$ with sectional curvatures having range $[-1,1]$.\\  
\end{thm}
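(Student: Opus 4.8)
The plan is to first turn the two pointwise hypotheses into a normal form for the curvature operator and then to integrate the resulting structure equations, using finite volume only at the very end. At a point $p\in M$ the curvature operator $\mathcal R\colon\Lambda^2 T_pM\to\Lambda^2 T_pM$ is symmetric, so in dimension three it is diagonalized by an orthonormal basis $\omega_1,\omega_2,\omega_3$ of $\Lambda^2T_pM$ whose Hodge duals $e_i=\star\omega_i$ are orthonormal in $T_pM$; the sectional curvatures then fill $[\l_{\min},\l_{\max}]$, the extreme eigenvalues of $\mathcal R$. First I would note that $\cvc(-1)$ forces $-1$ into this interval, so $\l_{\max}=-1$ when $\sec\le-1$ and $\l_{\min}=-1$ when $\sec\ge-1$. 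Solving $\sum\l_i a_i^2=-1$ on $\sum a_i^2=1$ then shows, in either case, that \emph{two} eigenvalues equal $-1$. Writing $\mu$ for the third eigenvalue and $\xi$ for its unit eigendirection (a line field defined wherever $\mu\ne-1$), this gives the uniform description: every $2$-plane containing $\xi$ has $\sec=-1$, while $\xi^{\perp}$ has $\sec=\mu$, with $\mu\le-1$ in the first case and $\mu\ge-1$ in the second. Equivalently $R=-R_1+(\mu+1)R_P$, where $R_1$ is the constant-curvature-$(-1)$ model tensor and $R_P$ is the curvature-type tensor of the projection metric onto $\xi^{\perp}$.

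Next I would extract the differential content of this normal form from the second Bianchi identity. Since $R_1$ is parallel, the twice-contracted identity $\di\Ric=\tfrac{1}{2}d(\operatorname{scal})$ applied to $\Ric=(\mu-1)g-(\mu+1)\,\xi\otimes\xi$ collapses, after splitting into components along and orthogonal to $\xi$, to
\[
(\mu+1)\,\n_\xi\xi=0,\qquad \xi(\mu)=-(\mu+1)\,\di\xi,
\]
so wherever $\mu\ne-1$ the integral curves of $\xi$ are geodesics and the variation of $\mu$ along $\xi$ is governed by $\di\xi$. The \emph{full} second Bianchi identity should then determine the remaining connection data—the shape operator of $\xi^{\perp}$ and the twisting $2$-form measuring its failure to be integrable—and I expect it to tie the sign of $\mu+1$ to this twisting. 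This is exactly where the two cases must diverge: $\sec\le-1$ (so $\mu\le-1$) should force $\xi^{\perp}$ integrable with umbilic leaves, whence the metric is locally a warped product $dt^2+\var(t)^2 g_\Sigma$ with $\xi=\p_t$; while $\sec\ge-1$ (so $\mu\ge-1$) should force a nonzero twisting, i.e.\ a contact plane field, realized by a left-invariant metric on a unimodular Lie group.

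In the integrable branch the mixed curvature $-1$ forces $\var''=\var$, hence $\var=Ae^{t}+Be^{-t}$, and the leaf curvature gives $\mu=-1+(k_\Sigma+4AB)/\var^2$ for a constant $k_\Sigma$. Now the finite-volume hypothesis enters: by Poincar\'e recurrence the geodesic flow returns arbitrarily close to itself on a full-measure set, and applying this to the $\xi$-geodesics together with the monotone behaviour of $\var$ forces $\di\xi\equiv0$, hence $\mu$ constant. Once $\mu$ is constant the warped-product relation forces $(k_\Sigma+4AB)/\var^2$ constant; since $\var$ is non-constant this gives $k_\Sigma+4AB=0$, i.e.\ $\mu\equiv-1$ and $M=\H^3$. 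In the non-integrable branch the same recurrence argument makes $\mu$ constant, so $M$ is curvature homogeneous; integrating the now-rigid structure equations makes it locally homogeneous, and I would invoke the classification of Section~\ref{section:homogeneous} to identify the surviving models as the left-invariant metrics on $E(1,1)$ and $\widetilde{\SL(2,\R)}$, whose sectional curvatures are computed to fill $[-1,1]$. Finally, finite volume together with local homogeneity lets me lift to the universal cover and identify it isometrically with the model.

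The main obstacle is the middle step: pushing the full second Bianchi identity far enough to integrate the structure equations to an explicit local model and to certify the integrable/non-integrable dichotomy, and then converting the soft finite-volume hypothesis into the hard conclusion that $\mu$ is constant. The asymmetry between the two cases is the delicate point—pointwise curvature algebra and the contracted Bianchi identity alone permit $\mu<-1$, so ruling these out (showing that a genuinely non-hyperbolic $\sec\le-1$ solution cannot have finite volume) is precisely where the argument must trade pointwise information for the global recurrence of the geodesic flow.
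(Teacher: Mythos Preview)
Your setup through the contracted Bianchi identity is correct and matches the paper: the adapted frame, the geodesic field $\xi=e_3$, and the relation $e_3(\l+1)=-(\tr A)(\l+1)$ with $\tr A=\di\xi$ all appear in Section~\ref{section:notation}. The genuine gap is the integrable/non-integrable dichotomy you are hoping for, and it is not merely unproved---it is false.

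The obstruction to integrability of $\xi^\perp$ is the skew part $b=\tfrac12(a_{12}-a_{21})$ of the endomorphism $A:v\mapsto\n_v e_3$, and nothing in the second Bianchi identity ties its vanishing to the sign of $\l+1$. Concretely, the $E(1,1)$ model in Theorem~\ref{cvc-1-hom} has $b=0$, so $\xi^\perp$ \emph{is} integrable, yet it has $\sec\ge-1$ and $\l\equiv1$. Worse for your warped-product branch: on the entire non-isotropic set the paper shows $-\det A_0=\sigma^2+\tau^2>0$ (the Corollary after Theorem~\ref{tr=0:-1}), so the symmetric part of $A$ is never a multiple of the identity and the leaves of $\xi^\perp$, when they exist, are \emph{never} umbilic. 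The local form $dt^2+\var(t)^2 g_\Sigma$ therefore does not arise at all outside the isotropic set, in either curvature regime.

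What actually drives the argument is a piece of information you have not extracted: beyond the single equation for $\l$, the curvature conditions $R_{1331}=R_{2332}=-1$ and $R_{1323}=R_{2313}=0$ yield a \emph{closed} ODE system along each $e_3$-geodesic for the pair $(\tr A,\det A)$ (Theorem~\ref{thm:basic-ode}), which integrates explicitly via the linear equation $\ell''-4\ell=2k$ (Lemma~\ref{odesolution:-1}). The finite-volume input is then a divergence argument rather than recurrence applied to $\mu$: if $\tr A=\di e_3$ were eventually positive on a tube of $e_3$-geodesics, the volume of a forward-saturated set would be both non-increasing and strictly increasing (Theorem~\ref{tr=0:-1}). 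Only after $\tr A\equiv0$ and $\det A\equiv-1$ are in hand does a second, genuinely recurrence-based step (Theorem~\ref{frame:-1}) kill the remaining Christoffel symbols $f,g$, whence $\l\equiv1$ (Proposition~\ref{mu:constant}). Both sign hypotheses are handled simultaneously by Singer's theorem and the classification of Section~\ref{section:homogeneous}; the $\sec\le-1$ conclusion falls out at the end because the only homogeneous model compatible with $\l\le-1$ is $\H^3$.
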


\begin{thm}\label{maintheorem-rr} A finite volume three-manifold $M$ with extremal curvature $-1$ and positive hyperbolic rank is real hyperbolic.
\end{thm}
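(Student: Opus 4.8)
The plan is to reduce to Theorem \ref{maintheorem-case-1} and then eliminate the two exceptional Lie groups using the positive hyperbolic rank hypothesis. First I observe that positive hyperbolic rank forces $M$ to be $\cvc(-1)$: given any $v\in T_pM$, let $\g$ be the complete geodesic with $\dot{\g}(0)=v$ and let $J$ be an orthogonal Jacobi field along $\g$ with $\sec(\dot{\g},J)(t)\equiv -1$. Since this sectional curvature is defined for every $t$, we have $J(t)\neq 0$ for all $t$; in particular $J(0)\neq 0$ is orthogonal to $v$ and spans with $v$ a $2$-plane of curvature $-1$. Thus $M$ is $\cvc(-1)$, and together with the extremal bound the hypotheses of Theorem \ref{maintheorem-case-1} are met. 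If $\sec\leq -1$ we conclude immediately that $M$ is real hyperbolic.

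It remains to treat the case $\sec\geq -1$. Arguing by contradiction, suppose $M$ is not real hyperbolic. By Theorem \ref{maintheorem-case-1} the universal cover $\widetilde{M}$ is isometric to a left-invariant metric on $E(1,1)$ or $\widetilde{\SL(2,\mathbb{R})}$ with sectional curvatures having range $[-1,1]$. Positive hyperbolic rank passes to $\widetilde{M}$: a complete geodesic upstairs projects to a complete geodesic in $M$, and the Jacobi field guaranteed there lifts through the local isometry with its sectional curvatures unchanged. Hence it suffices to show that neither of these homogeneous models has positive hyperbolic rank, producing the desired contradiction.

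To analyze a model $\widetilde{M}$, fix a complete unit-speed geodesic $\g$ and consider the symmetric curvature operator $w\mapsto R(w,\dot{\g})\dot{\g}$ on $\dot{\g}(t)^{\perp}$. Because $\sec\geq -1$ and $-1$ is attained along $\g$ by $\cvc(-1)$, its smaller eigenvalue is identically $-1$; on the open set where the $-1$-plane is unique (equivalently, where the larger eigenvalue exceeds $-1$) the corresponding eigenline is spanned by a smooth unit field $e_1(t)$, and any Jacobi field with $\sec(\dot{\g},J)\equiv -1$ must satisfy $J=f\,e_1$ with $f=|J|>0$. Writing $e_1'=a\,e_2$ for the unit normal field $e_2$ completing the orthonormal frame, the Jacobi equation separates into
\begin{equation}\label{eq:rr-ode}
f''=(1+a^2)f, \qquad (a f^2)'=0 .
\end{equation}
The second identity gives $af^2\equiv c$; since $f$ never vanishes, $c=0$ forces $a\equiv 0$, while $c\neq 0$ forces $a$ to be nowhere zero. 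Consequently positive hyperbolic rank fails along any complete geodesic whose minimal-curvature eigenline $e_1$ is non-parallel (so $a\not\equiv 0$) yet has vanishing rotation speed $a(t_0)=0$ at some point.

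The main task, and the principal obstacle, is therefore an explicit computation in each model: using the left-invariant frame one must identify the minimal-curvature line field and exhibit a complete geodesic along which $a$ changes sign. Since the models carry positive-curvature planes, the eigenline genuinely rotates along suitable geodesics, and I expect that the recurrent behavior of geodesics in these homogeneous spaces forces $a$ to vanish somewhere without vanishing identically; in the remaining case, where $a$ is nowhere zero, one instead checks directly that $f=\sqrt{c/a}$ cannot solve both relations in \eqref{eq:rr-ode}. Either way no admissible Jacobi field exists, contradicting positive hyperbolic rank and completing the proof.
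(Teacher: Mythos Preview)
Your reduction to Theorem~\ref{maintheorem-case-1} and the observation that positive hyperbolic rank lifts to the universal cover are exactly what the paper does. The gap is in the final step: you do not actually eliminate the Lie-group models. Your ODE framework \eqref{eq:rr-ode} is correct, but the last paragraph is a plan, not a proof---you write ``I expect that the recurrent behavior \ldots forces $a$ to vanish somewhere'' without verifying this, and you never identify a concrete geodesic on which to run the argument. In fact the recurrence heuristic points in the wrong direction: along the geodesic the paper uses, the rotation speed $a$ is a nonzero \emph{constant}, so it never vanishes.

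The paper completes the argument via Theorem~\ref{useinrigidity}, whose proof is short and explicit. In the left-invariant framing $\{e_1,e_2,e_3\}$ of the model one has $\nabla_{e_1}e_1=0$, so the integral curve $\gamma$ of $e_1$ is a geodesic with $\dot{\gamma}(t)=e_1(\gamma(t))$. By Theorem~\ref{thm:local-character} the curvature $-1$ planes are exactly those containing $e_3$, so any rank Jacobi field along $\gamma$ has the form $J(t)=a(t)\,e_3(t)$. Since $\nabla_{e_1}e_3=\mu e_2$ with $\mu\geq 1$ constant (this is your rotation speed), a direct computation gives $J''=2\mu a' e_2+(a''-\mu^2 a)e_3$ and $R(J,\dot{\gamma})\dot{\gamma}=-a\,e_3$; the Jacobi equation then forces $a'\equiv 0$ and $a''=(\mu^2+1)a$, whence $a\equiv 0$. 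In your language this is precisely the ``remaining case'' with $a\equiv\mu$ constant and nonzero: then $f=\sqrt{c/\mu}$ is constant, so $f''=0$, contradicting $f''=(1+\mu^2)f$. Replacing your speculative paragraph with this explicit choice of geodesic and two-line computation closes the argument.
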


The left-invariant metrics arising in Theorem \ref{maintheorem-case-1} are classified in Section \ref{section:homogeneous}; the necessity of the finite volume assumption will be addressed in \cite{scwo}.   Alternative proofs of Theorem \ref{maintheorem-rr} (1) appear in \cite{co, ha} when $M$ is compact.  To our knowledge (2) is the first hyperbolic rank-rigidity theorem without the additional assumption of non-positive sectional curvatures.  

In contrast to the $\cvc(-1)$ case, there is flexibility in the construction of $\cvc(0)$ three-manifolds.  Observe that any three-manifold that is locally a Riemannian product of a surface and an interval has $\cvc(0)$ since every tangent vector lies in a tangent plane containing the interval factor, a plane of curvature zero.  The following theorem is a partial converse of this observation. Recall that a point $p \in M$ is said to be \textit{isotropic} if all tangent planes to $p$ have the same sectional curvature.

\begin{thm}\label{maintheorem-case0}
Suppose that $M$ is a complete and finite volume $\cvc(0)$ three-manifold with extremal curvature $0$. The subset of $M$ consisting of non-isotropic points admits a local Riemannian product structure.  In addition, if $M$ has no isotropic points then its universal covering is isometric to a Riemannian product.
\end{thm}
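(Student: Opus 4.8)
The plan is to read off the pointwise algebra of the curvature operator, extract a distinguished line field on the non-isotropic set, and prove it is parallel. In dimension three the curvature operator on $\Lambda^2 T_pM$ is symmetric, so after Hodge-dualizing it is diagonalized by an orthonormal frame $\{e_1,e_2,e_3\}$, and the sectional curvature of a plane depends only on its unit normal $n=\sum_i b_i e_i$ through $\sec(n^\perp)=\sum_i b_i^2\lambda_i$, where $\lambda_1,\lambda_2,\lambda_3$ are the principal curvatures. The condition $\cvc(0)$ then says: for every unit $v$ there is a unit $n\perp v$ with $\sum_i b_i^2\lambda_i=0$. When $0$ is extremal, say $\sec\ge 0$ so all $\lambda_i\ge 0$, a term-by-term argument forces at least two $\lambda_i$ to vanish, while a single positive eigenvalue is compatible with $\cvc(0)$; the same holds with signs reversed when $\sec\le 0$. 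Hence at each non-isotropic point the curvature operator has eigenvalues $0,0,\kappa$ with $\kappa=\kappa(p)\neq 0$, the Ricci tensor is $\Ric=\kappa\,\pi_D$ with $\pi_D$ the orthogonal projection onto the plane $D$ spanned by the two $\kappa$-eigendirections, and there is a distinguished unit line field $L=\R\xi$, the null direction of $\Ric$, smooth on the open set $U$ of non-isotropic points. I expect the extremal hypothesis to be essential precisely here: without it $\cvc(0)$ permits eigenvalues such as $-1,0,1$, which occur on the non-product homogeneous spaces of Theorem \ref{maintheorem-case-1}.

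Next I would extract the first-order consequences. Since the two eigenplanes containing $\xi$ are flat one checks $R(X,\xi)Y=0$, hence $R(X,Y)\xi=0$ for all $X,Y$. The contracted second Bianchi identity applied to $\Ric=\kappa\,\pi_D$ separates into a component along $\xi$ and a component in $D$, yielding $\nabla_\xi\xi=0$ (so the integral curves of $\xi$ are geodesics, and $\xi$ and $D$ are parallel along them) together with $\xi(\kappa)+\kappa\operatorname{div}\xi=0$. Setting $A=\nabla\xi$, one has $A\xi=0$ and $A(D)\subseteq D$, and the Ricci identity $R(X,Y)\xi=(\nabla_X A)Y-(\nabla_Y A)X$ reduces, because $R(\cdot,\cdot)\xi=0$, to the Codazzi equation $(\nabla_X A)Y=(\nabla_Y A)X$. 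Specializing $Y=\xi$ gives the Riccati equation $\nabla_\xi A=-A^2$ along the integral geodesics of $\xi$.

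The crux is to upgrade this to $A\equiv 0$, i.e.\ that $\xi$ is parallel. Because $R(\cdot,\xi)\xi=0$, the Jacobi tensor along a $\xi$-geodesic solves $B''=0$, so $B(t)=\mathrm{Id}+tA_0$ and the area distortion of the flow of $\xi$ is the quadratic
\[
\det B(t)=1+t\operatorname{div}\xi+t^2\det(A|_D).
\]
Completeness makes the flow of $\xi$ global on the locus where $\xi$ is defined, forcing $\det B(t)>0$ for all $t$, while the finite-volume hypothesis forbids $\det B(t)\to\infty$; together these pin $\operatorname{div}\xi=0$ and $\det(A|_D)=0$, whence $\operatorname{tr}(A^2)=0$ and $A$ is nilpotent. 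Removing the residual shear then uses the tangential Codazzi equations together with $\xi(\kappa)=0$ and the exact product form of the curvature to conclude $A=0$. I expect this to be the main obstacle and the point where finite volume is indispensable: incomplete or infinite-volume $\cvc(0)$ examples with $A\neq 0$ should exist (the necessity of finite volume is addressed in \cite{scwo}), and the delicate technical issue is controlling the $\xi$-geodesics relative to the boundary of $U$, i.e.\ the isotropic set.

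Finally, $A=0$ means $L$ and $D=L^\perp$ are parallel distributions on $U$, so the local de Rham decomposition theorem presents $U$ as a Riemannian product $\Sigma^2\times I$, with $\Sigma$ a surface of Gauss curvature $\kappa$ whose sign is fixed by the extremal hypothesis; this is the asserted local product structure. If $M$ has no isotropic points then $U=M$, and pulling $\xi$ back to the universal cover $\widetilde M$ (after orienting $L$) produces a globally defined parallel unit field on a complete simply connected manifold, so the global de Rham theorem splits $\widetilde M$ isometrically as $\Sigma\times\R$.
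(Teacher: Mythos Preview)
Your outline tracks the paper's strategy quite closely: isolate the null line field $L=\R\xi$ of the Ricci tensor on the non-isotropic set, show $\xi$ is geodesic via Bianchi, derive the Riccati equation $A'=-A^2$ along $\xi$-geodesics (this is exactly the content of equations (\ref{equ:curvature--1331})--(\ref{equ:curvature--2332}), (\ref{equ:curvature--1323})--(\ref{equ:curvature--2313}) with $\e=0$, in a frame with $c=0$), use positivity of the Jacobi determinant $\ell(t)=1+t\,\tr A(0)+t^2\det A(0)$ together with a finite-volume divergence argument to force $\tr A=\det A=0$, and then apply de~Rham once $A\equiv 0$. Two points deserve comment.

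First, a minor one you already flag: completeness of $M$ does not by itself make the $\xi$-flow complete on $U$, since a $\xi$-geodesic could a priori reach the isotropic set in finite time. The paper closes this via the linear ode $(\lambda)'=-(\tr A)\lambda$ (equation (\ref{equ:ode-lambda}) with $\e=0$): since $\lambda\ne 0$ at the initial point, it never vanishes, so the geodesic stays in $U$ forever (Corollary~\ref{foliation}). You will need this both to justify $\ell(t)>0$ for all $t\in\R$ and to invoke Poincar\'e recurrence later.

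Second, and this is the real gap: after $\tr A=\det A=0$ the endomorphism $A|_D$ is nilpotent but not yet zero, and your sentence ``removing the residual shear then uses the tangential Codazzi equations together with $\xi(\kappa)=0$'' does not suffice. In a frame putting $A$ in the form $\bigl(\begin{smallmatrix}0&0\\ \mu&0\end{smallmatrix}\bigr)$ with $\mu\ne 0$, the Codazzi-type equations (\ref{equ:curvature--1213})--(\ref{equ:curvature--2312}) give only $g=0$, $e_3(f)=0$, and $\bar e_1(\mu)=-f\mu$; they do \emph{not} force $\mu=0$ or $f=0$. The paper needs a second, genuinely dynamical, use of finite volume here (Lemma~\ref{rho-constant}): assuming $\mu$ nonconstant, one foliates by its level surfaces $\Sigma_r$ (to which $e_3$ is tangent since $e_3(\mu)=0$), measures the angle $\theta$ between the adapted frame and the normal to $\Sigma_r$, and computes $\theta'=-\mu\cos^2\theta$, so $\tan\theta(t)=\tan\theta(0)-\mu t\to\pm\infty$. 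Poincar\'e recurrence for the volume-preserving $\xi$-flow then gives the contradiction. Only after $\mu$ is shown constant do the curvature equations (\ref{equ:curvature--1213})--(\ref{equ:curvature--1223}) give $f=g=0$, whence (\ref{equ:curvature--1221}) yields $\lambda=0$ on the putative region, contradicting non-isotropy and forcing $A\equiv 0$ everywhere on $\mathcal P$. You correctly anticipate that finite volume is indispensable at this step, but the mechanism is recurrence rather than anything extractable from Codazzi alone.
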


We conclude with a brief account of our methods.  A $\cvc(\e)$ three-manifold $M$ with extremal curvature $\e$ has the following local structure in neighborhoods of non-isotropic points (see Section \ref{section:notation}):  $M$ admits a local orthonormal framing $\{ e_1, e_2, e_3 \}$ that diagonalizes the Ricci tensor with eigenvalues $\{ \l+\e, \l+\e, 2\e \}$ where $\l \neq \e$. The line field spanned by $e_3$ is globally defined in the set of non-isotropic points and is tangent to a foliation by complete geodesics.  This is essentially all that local theory yields. Indeed, a necessary condition for $M$ to be locally homogeneous is for $\l$ to be locally constant.  Even in this case, the local isometry classes of Riemannian three-manifolds for which the eigenvalues of the Ricci tensor are constants $\rho_1=\rho_2 \neq \rho_3$ depend on two arbitrary functions of one variable \cite{ko1, ko2}.

We introduce global methods by studying various ode along the $e_3$-geodesics satisfied by Christoffel symbols. When $M$ has finite volume and  $\e=-1$ these ode give enough information to conclude that, in a suitable frame, the curvature and its covariant derivatives are constant. A result of Singer \cite{si} then implies that $M$ is locally homogeneous. Similarly, when $M$ has finite volume and $\e=0$, these ode give enough information to deduce the local product structure. 

Section \ref{section:notation} introduces preliminary ideas common throughout the paper. Section \ref{section:homogeneous} describes the homogeneous $\cvc(\e)$ three-manifolds with extremal curvature $\e$.  The proofs of Theorems \ref{maintheorem-case-1} and \ref{maintheorem-rr} appear in Section \ref{section:cvc(-1)}.  The proof of Theorem \ref{maintheorem-case0} appears in Section \ref{section:cvc(0)}.

A preliminary study of $\cvc(1)$ three-manifolds with extremal curvature $1$ shows that there are large moduli of such metrics.  The construction of these moduli involve substantially different methods and will constitute a future paper.

\section{\bf General structure of $\cvc(\e)$ three-manifolds}
\label{section:notation}

In this section, we set up notation and collect together general results that will be used in subsequent sections.

\subsection{\bf Local computations}

We begin with a curvature identity that holds for an \textit{arbitrary} Riemannian three-manifold $M$.  Let $\{v_1, v_2, v_3 \}$ be an orthonormal framing of a neighborhood $U$ of a point $p \in M$. For unit orthogonal vectors $X, Y \in T_pM$, write $X = \sum_{i=1}^3 a_i v_i$ and $Y = \sum_{i=1}^3 b_i v_i$, where the $a_i$ and $b_i$ are scalars.  Set $A = (a_1, a_2, a_3)$ and $B = (b_1, b_2, b_3)$. Then $|A| = |B| = 1$. Since $X$ and $Y$ are orthogonal unit vectors, we have:
$$
\sec(X \wedge Y) = \langle R(X, Y) Y, X \rangle .
$$

\begin{lem}\label{local3d}
Let $C = (c_1, c_2, c_3) = A \times B$ and set $Z = \sum_{i=1}^3 c_i v_i$. Then
$$
\langle R(X, Y) Y, X \rangle = \frac{1}{2} S - \Ric(Z,Z),
$$
where $\Ric$ is the Ricci curvature and $S$ is the scalar curvature.
\end{lem}

\begin{proof}
We first compute:
\begin{eqnarray*}
R(X,Y) &=& \sum_{i,j} a_i b_j R(v_i,v_j) = \sum_{i,j} a_i b_j R_{ij} \\
&=& (a_1b_2 - a_2 b_1) R_{12} + (a_1b_3 - a_3 b_1) R_{13} + (a_2b_3 - a_3 b_2) R_{23}\\
&=& c_3 R_{12} - c_2 R_{13} + c_1 R_{23},
\end{eqnarray*}

where $R_{ij} : T_pM \to T^*_pM$. Then

\begin{eqnarray*}
\langle R(X, Y) Y, X \rangle
&=& c_3^2 R_{1221} - c_2 c_3 R_{1231} - c_1 c_3 R_{1223}\\
&&- c_3 c_2 R_{1321} + c_2^2 R_{1331} - c_1 c_2 R_{1332} \\
&&- c_1 c_3 R_{2312} - c_1 c_2 R_{2331} + c_1^2 R_{2332} 
\end{eqnarray*}

Using $c_1^2 + c_2^2 + c_3^2 = 1$ we have

\begin{eqnarray*}
\langle R(X, Y) Y, X \rangle
&=& (1 - c_1^2 - c_2^2) R_{1221} - c_2 c_3 R_{1231} - c_1 c_3 R_{1223}\\
&&- c_3 c_2 R_{1321} +  (1 - c_1^2 - c_3^2) R_{1331} - c_1 c_2 R_{1332} \\
&&- c_1 c_3 R_{2312} - c_1 c_2 R_{2331} + (1 - c_2^2 - c_3^2)  R_{2332}  \\
&=& \frac{1}{2} S -   \begin{pmatrix} c_1& c_2 & c_3 \\   \end{pmatrix}  \begin{pmatrix} \Ric  \end{pmatrix} \begin{pmatrix} c_1 \\
c_2 \\
c_3 \end{pmatrix}\\
&=&  \frac{1}{2} S - \Ric(Z,Z).
\end{eqnarray*}
\end{proof}

\begin{lem}\label{quadratic}
Let $\{v_1,v_2,v_3\}$ be a local orthonormal framing over a neighborhood $U$ of $p \in M$ for which the Ricci tensor is diagonal.  Let $\sigma$ be a two-plane at $p$ and $Z=\sum_{i=1}^{3} c_i v_i$ a unit vector orthogonal to $\sigma$.  Then $$\sec(\sigma)=c_1^2 \l_{23} + c_2^2 \l_{13} + c_3^2 \l_{12}$$ where $$
\l_{12} = R_{1221}, \;\; \l_{13}= R_{1331}, \;\; \l_{23} = R_{2332}.  
$$
\end{lem}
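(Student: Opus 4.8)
The plan is to feed the hypotheses directly into Lemma~\ref{local3d} and then reduce everything to the three numbers $\l_{12},\l_{13},\l_{23}$. Writing $\sigma = X\wedge Y$ for an orthonormal basis $\{X,Y\}$ of $\sigma$ and letting $Z' = A\times B$ in the notation of that lemma, the vector $Z'$ is a unit normal to $\sigma$, so it agrees with the given $Z$ up to sign. Since $\Ric(Z,Z)$ is insensitive to this sign, Lemma~\ref{local3d} applies verbatim and gives $\sec(\sigma) = \tfrac12 S - \Ric(Z,Z)$. The whole task then becomes rewriting this right-hand side in the frame $\{v_1,v_2,v_3\}$.

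The step that does the work is to exploit the diagonality of the Ricci tensor. Expanding $Z=\sum_i c_i v_i$ and discarding the vanishing off-diagonal entries gives $\Ric(Z,Z) = \sum_i c_i^2\,\Ric(v_i,v_i)$. I would then express each diagonal Ricci entry through the planar sectional curvatures: in an orthonormal frame $\Ric(v_i,v_i)=\sum_{j\neq i}\langle R(v_i,v_j)v_j,v_i\rangle$, which yields $\Ric(v_1,v_1)=\l_{12}+\l_{13}$, $\Ric(v_2,v_2)=\l_{12}+\l_{23}$, and $\Ric(v_3,v_3)=\l_{13}+\l_{23}$. Summing these three identities computes the scalar curvature as $S = 2(\l_{12}+\l_{13}+\l_{23})$, so that $\tfrac12 S = \l_{12}+\l_{13}+\l_{23}$.

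Finally I would substitute and collect. Grouping the coefficient of each $\l_{ij}$ in $\tfrac12 S - \Ric(Z,Z)$ and invoking the normalization $c_1^2+c_2^2+c_3^2=1$ turns the coefficient of $\l_{12}$ into $1-c_1^2-c_2^2=c_3^2$, that of $\l_{13}$ into $1-c_1^2-c_3^2=c_2^2$, and that of $\l_{23}$ into $1-c_2^2-c_3^2=c_1^2$, producing exactly $\sec(\sigma)=c_1^2\l_{23}+c_2^2\l_{13}+c_3^2\l_{12}$. I do not expect a genuine obstacle; the only point demanding care is the bookkeeping, namely that each squared coefficient $c_i^2$ pairs with the sectional curvature of the plane \emph{opposite} to $v_i$, which is precisely what the cancellation against $\tfrac12 S$ arranges.
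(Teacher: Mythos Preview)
Your proposal is correct and follows essentially the same approach as the paper: apply Lemma~\ref{local3d}, write $\tfrac12 S$ and $\Ric(Z,Z)$ in terms of $\l_{12},\l_{13},\l_{23}$ using diagonality of the Ricci tensor, and subtract. The only extra care you take is noting that the cross-product normal $Z'$ may differ from $Z$ by a sign, which is harmless since $\Ric(Z,Z)$ is even in $Z$; the paper leaves this implicit.
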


\begin{proof}

As $\{v_1,v_2,v_3\}$ diagonalizes the Ricci tensor,
\begin{eqnarray*}
\frac{1}{2} S &=& \l_{12} + \l_{13} + \l_{23} \\
\Ric(Z,Z) &=& c_1^2 ( \l_{12} + \l_{13}) + c_2^2( \l_{12} +  \l_{23}) + c_3^2 ( \l_{13} + \l_{23}).
\end{eqnarray*}

By Lemma \ref{local3d},  $$\sec(\sigma)=\frac{1}{2}S-\Ric(Z,Z)=c_1^2 \l_{23} + c_2^2 \l_{13} + c_3^2 \l_{12},$$ as desired.

\end{proof}

\begin{lem}
\label{cvclocal}
Let $M$ be a Riemannian three-manifold with $\cvc(\e)$ and let $\{v_1,v_2,v_3\}$ be a local orthonormal framing over a neighborhood of $p \in M$ which diagonalizes the Ricci tensor.  Then at least one of the sectional curvatures $\l_{ij}$ equals $\e$.
\end{lem}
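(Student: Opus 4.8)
The plan is to use Lemma~\ref{quadratic} to realize every sectional curvature at $p$ as a value of a quadratic form, and then exploit the $\cvc(\e)$ hypothesis to force one of the diagonal entries of that form to equal $\e$. By Lemma~\ref{quadratic}, for any unit vector $Z=\sum c_i v_i$ orthogonal to a two-plane $\sigma$, we have $\sec(\sigma)=c_1^2\l_{23}+c_2^2\l_{13}+c_3^2\l_{12}$. In other words, the sectional curvature of the plane $\sigma=Z^\perp$ is exactly the value at $(c_1,c_2,c_3)$ of the diagonal quadratic form $q(c)=\l_{23}c_1^2+\l_{13}c_2^2+\l_{12}c_3^2$ restricted to the unit sphere. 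Conversely, every two-plane $\sigma$ at $p$ arises as $Z^\perp$ for a unit normal $Z$, so the set of sectional curvatures at $p$ is precisely the set of values $q$ takes on the unit sphere in $T_pM$.

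First I would unpack what $\cvc(\e)$ says in this language. For each unit vector $v\in T_pM$, the hypothesis provides a two-plane $\sigma$ containing $v$ with $\sec(\sigma)=\e$. Writing $\sigma=Z^\perp$, the condition that $v\in\sigma$ is exactly $\langle v,Z\rangle=0$, and $\sec(\sigma)=\e$ becomes $q(Z)=\e$. Thus $\cvc(\e)$ asserts: for every unit $v$, there exists a unit $Z\perp v$ with $q(Z)=\e$. Equivalently, the level set $\{q=\e\}$ on the unit sphere meets every great circle's worth of normal directions---more precisely, its orthogonal complement hits every $v$. The cleanest reformulation is that $\e$ lies in the range of $q$ on the unit sphere, and I will argue it must in fact be attained at one of the coordinate axes $e_i$, where $q$ takes the values $\l_{23},\l_{13},\l_{12}$.

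The key step is to show that $\e$ being in the range of $q$ on the \emph{whole} sphere is not enough on its own, but that the \emph{uniform} availability of a normal vector for \emph{every} $v$ forces $\e$ to be one of the three eigenvalues $\l_{ij}$. Here is the mechanism I expect to use. Order the diagonal entries and suppose, for contradiction, that $\e$ differs from all three of $\l_{23},\l_{13},\l_{12}$. Since $q$ is continuous on the connected unit sphere, its range is the closed interval $[\min_i,\max_i]$ between the smallest and largest of the $\l_{ij}$. If $\e$ lies strictly outside this interval, then $q\neq\e$ everywhere and $\cvc(\e)$ fails outright at $p$. The substantive case is $\e$ strictly between two of the eigenvalues: I would then choose $v$ to be the coordinate axis $v_i$ corresponding to the \emph{middle} eigenvalue and examine the restriction of $q$ to the two-plane $v_i^\perp$. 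On that plane $q$ reduces to a quadratic form in two variables whose two coefficients are the remaining two eigenvalues; if $\e$ does not lie between \emph{those} two, then no unit $Z$ in $v_i^\perp$ satisfies $q(Z)=\e$, contradicting $\cvc(\e)$ at the vector $v_i$. Carefully bookkeeping the three orderings shows that in every configuration where $\e$ is not already one of the $\l_{ij}$, some coordinate axis $v_i$ admits no orthogonal $Z$ with $q(Z)=\e$, contradicting the hypothesis.

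The main obstacle, and the part deserving the most care, is this combinatorial case analysis over the possible orderings of $\e$ relative to the three eigenvalues: one must verify that for the single coordinate direction $v_i$ picked out as the ``obstruction axis,'' the two-dimensional quadratic form on $v_i^\perp$ genuinely omits the value $\e$. The verification hinges on the elementary fact that a two-variable diagonal form $\alpha x^2+\beta y^2$ restricted to the unit circle has range exactly $[\min(\alpha,\beta),\max(\alpha,\beta)]$, so it attains $\e$ iff $\e\in[\min(\alpha,\beta),\max(\alpha,\beta)]$. Once this is in hand, the contradiction in each ordering is immediate, and the conclusion is that at least one $\l_{ij}$ must equal $\e$.
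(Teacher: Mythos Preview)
Your overall strategy is correct and matches the paper's: use Lemma~\ref{quadratic}, suppose for contradiction that no $\l_{ij}$ equals $\e$, and locate a coordinate axis $v_k$ such that every plane through $v_k$ has curvature $\neq\e$. However, your explicit prescription to choose ``the coordinate axis $v_i$ corresponding to the \emph{middle} eigenvalue'' is wrong. Take for instance $\l_{12}<\l_{13}<\l_{23}$ with $\e\in(\l_{12},\l_{13})$. The middle eigenvalue is $\l_{13}=q(v_2)$, so your rule selects $v_2$; but on $v_2^\perp$ one has $q(Z)=c_1^2\l_{23}+c_3^2\l_{12}$, whose range $[\l_{12},\l_{23}]$ \emph{does} contain $\e$, and no contradiction results. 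The axis that actually works is $v_3$, since $q(v_3)=\l_{12}$ is the eigenvalue on the \emph{lone} side of $\e$: on $v_3^\perp$ the form becomes $c_1^2\l_{23}+c_2^2\l_{13}$ with range $[\l_{13},\l_{23}]$, entirely above $\e$. Your closing remark about ``carefully bookkeeping the three orderings'' would presumably catch this, but the stated mechanism is incorrect as written.

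The paper sidesteps the ordering analysis entirely with a one-line pigeonhole: if none of $\l_{12},\l_{13},\l_{23}$ equals $\e$, then at least two of them lie strictly on the same side of $\e$. Any two of the symbols $\l_{12},\l_{13},\l_{23}$ share a common index $k$; every plane containing $v_k$ has unit normal $Z\in v_k^\perp$, so its curvature is a convex combination of those two same-side values and therefore is never $\e$, contradicting $\cvc(\e)$ at $v_k$. This is the same geometric idea as yours, but the pigeonhole framing identifies the obstruction axis directly and eliminates the case split.
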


\begin{proof}
If not, then there either exist two of the $\l_{ij}$ which are both greater than $\e$ or which are both less than $\e$.  Consider the case when two of the $\l_{ij}>\e$ (the other case is analogous).  After a possible relabeling of indices, we have that  both $\l_{23}$ and $\l_{13} $ are greater than $\e$.  Then by Lemma \ref{quadratic}, all planes containing $v_3$ have curvature greater than $\e$, a contradiction.
\end{proof}

\subsection{Adapted frames, the $A$ matrix, and curvature equations.}\text{}

The starting point for all of our analysis concerning $\cvc(\e)$ manifolds with extremal curvature $\e$ is the following:

\begin{thm}
\label{thm:local-character}
Let $M$ be a Riemannian three-manifold with $\cvc(\e)$ and extremal curvature $\e$. Then for each non-isotropic point $p \in M$ there is a number $\l_p\neq \e$ and a unit vector $e \in T_pM$ such that 

\begin{enumerate}

\item the curvature $\e$ planes at $p$ are precisely those containing the vector $e$

 \item the plane $P$ orthogonal to $e$ has sectional curvature $\l_p$ and all sectional curvatures at $p$ lie between $\e$ and $\l_p$

\item for any orthonormal frame $\{e_1, e_2 \}$ of the plane $P$, the frame $\{e_1, e_2, e= e_3 \}$ diagonalizes the Ricci tensor at $p$ with eigenvalues $\{ \l + \e, \l+ \e, 2\e \}$.

\end{enumerate}
\end{thm}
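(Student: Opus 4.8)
The plan is to fix a non-isotropic point $p$, choose an orthonormal frame $\{v_1,v_2,v_3\}$ diagonalizing the Ricci tensor at $p$, and exploit the convex-combination structure supplied by Lemma \ref{quadratic}. Writing $\l_{12}=R_{1221}$, $\l_{13}=R_{1331}$, $\l_{23}=R_{2332}$, that lemma expresses the sectional curvature of any plane $\sigma$ with unit normal $Z=\sum_i c_i v_i$ as $\sec(\sigma)=c_1^2\l_{23}+c_2^2\l_{13}+c_3^2\l_{12}$. Since $(c_1^2,c_2^2,c_3^2)$ ranges over the standard simplex as $Z$ ranges over the unit sphere, the set of sectional curvatures at $p$ is exactly the interval spanned by $\{\l_{12},\l_{13},\l_{23}\}$; in particular $p$ is non-isotropic precisely when these three numbers are not all equal.

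Next I would reduce to the case $\sec\ge\e$ (the case $\sec\le\e$ being identical after reversing inequalities), so that all three $\l_{ij}\ge\e$. Lemma \ref{cvclocal} forces at least one $\l_{ij}=\e$, while non-isotropy forbids all three from being equal, hence from all equalling $\e$. The heart of the argument is then to rule out the possibility that \emph{exactly one} of the three equals $\e$: if, say, $\l_{12}=\e$ while $\l_{13},\l_{23}>\e$, then $\sec(\sigma)=\e$ forces $c_1=c_2=0$, so the only curvature-$\e$ plane is $\mathrm{span}\{v_1,v_2\}$. But then the normal direction $v_3$ lies in no curvature-$\e$ plane, contradicting $\cvc(\e)$; the same computation handles the other two labelings. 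I expect this step to carry the real weight, since it is exactly where the global $\cvc$ hypothesis (every vector lies in a curvature-$\e$ plane) is used to eliminate a configuration that the pointwise curvature bounds alone do not exclude.

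Having excluded zero, one, and three of the $\l_{ij}$ from equalling $\e$, exactly two must equal $\e$; after relabeling I may assume $\l_{12}=\l_{13}=\e$ and $\l_{23}=:\l_p\neq\e$, and I set $e:=v_1$ with $P:=\mathrm{span}\{v_2,v_3\}$. Conditions (1) and (2) then fall out of the convex-combination formula: $\sec(\sigma)=c_1^2(\l_p-\e)+\e$, which equals $\e$ iff $c_1=0$, i.e.\ iff $e\in\sigma$, giving (1); and every sectional curvature lies between $\e$ and $\l_p=\sec(P)$, giving (2). For condition (3) I would compute the Ricci eigenvalues directly from $\Ric(v_i,v_i)=\sum_{j\neq i}\l_{ij}$, obtaining $\Ric(e,e)=2\e$ and $\Ric(v_2,v_2)=\Ric(v_3,v_3)=\l_p+\e$. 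Because the eigenvalue $\l_p+\e$ has multiplicity two, its eigenspace is all of $P$, so any orthonormal pair $\{e_1,e_2\}$ spanning $P$ together with $e_3:=e$ diagonalizes the Ricci tensor with eigenvalues $\{\l_p+\e,\l_p+\e,2\e\}$, as claimed.
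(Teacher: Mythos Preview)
Your proof is correct and follows essentially the same approach as the paper: diagonalize the Ricci tensor, use Lemma~\ref{quadratic} together with Lemma~\ref{cvclocal}, non-isotropy, and extremality to conclude that exactly two of the $\l_{ij}$ equal $\e$, and then read off (1)--(3) from the convex-combination formula. Your elimination of the ``exactly one $\l_{ij}=\e$'' case is slightly more direct (you invoke extremality first to force the remaining two $\l_{ij}$ strictly above $\e$, then contradict $\cvc(\e)$), whereas the paper applies the $\cvc$ argument first to force the remaining two onto opposite sides of $\e$ and then contradicts extremality; these are logically equivalent reorderings of the same idea.
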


\begin{proof}
Let $p \in M$ be a non-isotropic point and assume that $\sec \geq \e$;  the case when $\sec \leq \e$ is handled similarly.  Let $\{v_1, v_2, v_3\}$ be an orthonormal framing of $T_pM$ as in Lemma \ref{quadratic}. It suffices to prove that exactly two of $\l_{12},\l_{13},\l_{23}$ equal $\epsilon$.  

As $p$ is not isotropic, one of the $\l_{ij}$ is not equal to $\e$ and by Lemma \ref{cvclocal} at least one of the $\l_{ij}$ equals $\e$.  If at most one of the $\l_{ij}$ equals $\e$, then one is less than $\e$ and one is greater than $\e$.  This contradicts the assumption that $\e$ is extremal. 
\end{proof}

By Theorem \ref{thm:local-character}, there is a continuous function $\lambda:M \rightarrow \R$, (with value $\e$ at isotropic points), with the property that $\sec$ has range between $\l_p$ and $\e$ at each $p\in M$.  In fact, we have that $\lambda$ is smooth on $M$ since the scalar curvature of $M$ equals $2\lambda+4 \e$.  Throughout, we let ${\mathcal I}=\{\l=\e\}$ denote the closed subset of isotropic points and ${\mathcal P} = M \setminus {\mathcal I}$ the open subset of non-isotropic points. 

We frequently will exploit the fact that $\cvc(\e)$ metrics lift to $\cvc(\e)$ metrics on covers.  In particular, we always assume that $M$ is oriented.  On the non-isotropic set ${\mathcal P}$ the vector field $e_3$ is \textit{locally} well-defined; the line field spanned by the vector field $e_3$ is globally defined.  This line field may not be orientable in which case it is in a double cover of each connected component of  ${\mathcal P}$.  In all arguments that follow, the reader may check that there is no loss of generality by passing to these covers when necessary. Consequently, we will always assume that the line field is oriented by a \textit{globally} defined vector field $e_3$ on $\mathcal{P}$.  As $M$ is oriented, a choice for the vector field $e_3$ as above orients the perpendicular plane field $e_3^{\perp}$.  

\begin{defn} An \textit{adapted frame} is a positively oriented orthonormal framing $\{e_1,e_2,e_3\}$.  
\end{defn}

Note that any two adapted frames differ by the $\SO(2)$ action on positively oriented orthonormal subframings of $e_3^{\perp}$.  Making a choice of such a subframe, we write the Christoffel symbols of the Levi-Civita connection $\n$ for the adapted frame $\{e_1, e_2, e_3\}$ as follows:

\begin{eqnarray}
\label{equ:Christoffel} \nonumber
\n_{e_1} e_3 = a_{11} e_1 + a_{12} e_2  && \n_{e_2} e_3 = a_{21} e_1 + a_{22} e_2 \\[.2cm] \nonumber
\n_{e_3} e_1 = c e_2 && \n_{e_3} e_2 = -c e_1 \\[.2cm]
\n_{e_2} e_1 = f e_2  - a_{21} e_3 && \n_{e_2} e_2 = -f e_1  - a_{22} e_3 \\[.2cm]  \nonumber
\n_{e_1} e_2 = ge_1  - a_{12} e_3 && \n_{e_1} e_1 = -g e_2  - a_{11} e_3 \\[.2cm] \nonumber
\n_{e_3} e_3 = 0. \nonumber
\end{eqnarray}

We remark that $\n_{e_3} e_3 = 0$ is demonstrated below in Theorem \ref{thm:e_3-geodesics}.  Consider the linear endomorphism $e_3^{\perp} \rightarrow e_3^{\perp}$ defined by $v \mapsto \nabla_{v} e_3$.  Throughout, we denote its adjoint by $A:e_{3}^\perp \rightarrow e_3^{\perp}$. With respect to the subframing $\{e_1,e_2\}$ of $e_3^{\perp}$ we have that 

$$
A= \begin{pmatrix} a_{11} & a_{12} \\
a_{21} & a_{22} \end{pmatrix} 
$$

\bigskip

Let $T_{\theta}=\begin{pmatrix} \cos \theta & \sin \theta \\
-\sin \theta & \cos \theta \end{pmatrix}$.  Under the orthonormal change of  adapted frame:

\begin{equation}
\label{equ:frametransform}
 \begin{pmatrix} \widetilde{e}_1 \\
 \widetilde{e}_2 \end{pmatrix}   \\  = T_{\theta}  \\ \begin{pmatrix}{e_1} \\
 {e_2} \end{pmatrix}   \\ 
\end{equation}

the matrix A transforms by:

\begin{equation}
\label{equ:transform-A}
\begin{pmatrix} \widetilde{a_{11}} & \widetilde{a_{12}} \\
\widetilde{a_{21}} & \widetilde{a_{22}} \end{pmatrix} = T_{\theta} \begin{pmatrix} a_{11} & a_{12} \\
a_{21} & a_{22} \end{pmatrix} T_{\theta}^{-1}
\end{equation}

and the remaining Christoffel symbols $f$, $g$, and $c$  transform by:

\begin{equation}
\label{equ:transform-f-g-c}
 \begin{pmatrix} \widetilde{f} \\
 \widetilde{g} \\
   \widetilde{c} \\ \end{pmatrix}   \\  = \begin{pmatrix} \cos \theta & \sin \theta & 0 \\
-\sin \theta & \cos \theta & 0 \\
0 & 0 & 1\\ \end{pmatrix}   \\ \begin{pmatrix} f + {e_2}(\theta) \\
 g - {e_1}(\theta) \\
 c  +  {e_3}(\theta) \\ \end{pmatrix}  \\ 
\end{equation}

\bigskip

Next, we describe a normal forms decomposition for $A$.  Write $A$ as the sum of a symmetric matrix $A_{sym}$ and a skew-symmetric matrix $A_{sk}$.  Further decompose  $A_{sym}$ into the sum of a traceless symmetric matrix $A_{0}$ and the multiple of the identity matrix $(\frac{1}{2} \tr A) \mbox{Id}$ so that 
$$
A = A_{0} + (\tfrac{1}{2} \tr A) \mbox{Id} + A_{sk}.
$$
\medskip

Set $\sigma = \frac{1}{2}(a_{11}- a_{22})$, $\tau = \frac{1}{2} (a_{12} + a_{21})$, $a = \frac{1}{2} (a_{11} + a_{22})$ and $b= \frac{1}{2} (a_{12} - a_{21})$. 

\medskip

Then,
$$
A_0=  \begin{pmatrix} \sigma & \tau \\
\tau & -\sigma \end{pmatrix}, \,\,\,\,\, (\tfrac{1}{2} \tr A) \mbox{Id}= \begin{pmatrix} a & 0 \\0 & a \end{pmatrix},\,\,\,\,\,  A_{sk}=\begin{pmatrix} 0 & b \\-b & 0 \end{pmatrix}. $$

The matrices $(\tfrac{1}{2} \tr A)\mbox{Id}$ and $A_{sk}$ are fixed under conjugation by an element of $\SO(2)$.  It follows that the normal forms decomposition of $A$ is invariant under conjugation by an element of $\SO(2)$ and that $\tr A$, $\det A$, $a$, and $b$ are smooth functions on $\mathcal{P}$ defined independently of a choice of an adapted frame.   Note that under the change $e_3 \to -e_3$ we have $b \to -b$ and $a \to -a$.

Under conjugation by the matrix $$T_{\theta}  =  \begin{pmatrix} \cos \theta &  \sin \theta \\
- \sin  \theta &  \cos  \theta \end{pmatrix}$$ the matrix $A_0$ transforms according to

\begin{equation}
\label{equ:transformA_0}
T_{\theta}  A_{0} T_{\theta}^{-1} =  \begin{pmatrix} \sigma \cos 2 \theta + \tau \sin 2  \theta &  \tau \cos 2 \theta - \sigma \sin 2 \theta \\[.2cm]
\tau \cos 2  \theta - \sigma \sin 2 \theta & -(\sigma \cos 2 \theta + \tau \sin 2  \theta) \end{pmatrix}   \\
\end{equation}

\medskip

While $\sigma$ and $\tau$ in general depend on a choice of an adapted frame, the function $-\det A_0= \sigma^2+ \tau^2$ is a smooth function on $\mathcal{P}$ defined independently of a choice of adapted frame.

Using the curvature tensor we derive the following equations on the Christoffel symbols:
From $R_{1221} = \l$:
\begin{equation}
\label{equ:curvature--1221}
 e_1(f) + e_2(g)  + f^2 + g^2 + c(a_{12} - a_{21}) + \det A =- \l
\end{equation}
From $R_{1331} = \e$:
\begin{equation}
\label{equ:curvature--1331}
-e_3(a_{11})  +  c(a_{12} + a_{21}) - (a_{11})^2 - a_{12} a_{21} = \e
\end{equation}
From $R_{2332} = \e$ :
\begin{equation}
\label{equ:curvature--2332}
 -e_3(a_{22})  - c( a_{12} + a_{21}) - (a_{22})^2 - a_{12} a_{21} = \e
\end{equation}
From $R_{1213} = 0$ :
\begin{equation}
\label{equ:curvature--1213}
-e_1(a_{21}) + e_2(a_{11}) + g(a_{11} - a_{22}) - f(a_{21} + a_{12})= 0
\end{equation}
From $R_{1223} = 0$ :
\begin{equation}
\label{equ:curvature--1223}
-e_1(a_{22}) + e_2(a_{12}) +  f(a_{11}-a_{22}) + g(a_{12} + a_{21})= 0
\end{equation}
From $R_{1312} = 0$ :
\begin{equation}
\label{equ:curvature--1312}
e_1(c) + e_3(g) + ga_{11} + f(c - a_{12})= 0
\end{equation}
From $R_{2312} = 0$ :
\begin{equation}
\label{equ:curvature--2312}
e_2(c) - e_3(f)   - f a_{22} + g(c + a_{21})= 0
\end{equation}
From $R_{1323} = 0$ :
\begin{equation}
\label{equ:curvature--1323}
 e_3(a_{12}) +  c( a_{11} - a_{22}) + a_{12} \tr A = 0
\end{equation}
From $R_{2313} = 0$ :
\begin{equation}
\label{equ:curvature--2313}
 e_3(a_{21}) +  c( a_{11} - a_{22}) + a_{21} \tr A = 0
\end{equation}

\medskip

Hence taking the difference of (\ref{equ:curvature--1323}) and (\ref{equ:curvature--2313}) we have:
\begin{equation}
\label{equ:curvature--difference}
e_3(b) +  (\tr A)b = 0
\end{equation}
Taking the sum of (\ref{equ:curvature--1323}) and (\ref{equ:curvature--2313}) we have:
\begin{equation}
\label{equ:curvature--sum2}
e_3(\tau) + 2c \sigma +  (\tr A) \tau = 0
\end{equation}
Taking the difference of (\ref{equ:curvature--1331}) and (\ref{equ:curvature--2332}) we have:
\begin{equation}
\label{equ:curvature--difference2}
e_3(\sigma) - 2c \tau +  (\tr A) \sigma = 0
\end{equation}

 \bigskip

 \subsection{The $e_3$-geodesic decomposition of $\mathcal{P}$.}\text{}

 The first theorem in this subsection demonstrates that $\mathcal{P}$ is foliated by geodesics of $M$ tangent to the vector field $e_3$.  Throughout, these geodesics will be called $e_3$-geodesics.

\begin{thm}
\label{thm:e_3-geodesics}
On the set $\mathcal{P}$ the vector-field $e_3$ satisfies: $\n_{e_3} e_3 = 0$.
\end{thm}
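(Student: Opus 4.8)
The plan is to sidestep the connection equations \eqref{equ:curvature--1331}--\eqref{equ:curvature--2313} entirely (these are derived from the Christoffel display \eqref{equ:Christoffel}, which already builds in $\n_{e_3}e_3=0$, and so are unavailable at this stage), and instead to extract the geodesic equation from the contracted second Bianchi identity combined with the rigid Ricci eigenstructure recorded in Theorem~\ref{thm:local-character}(3).

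First I would rewrite the Ricci endomorphism $\rho$ on $\mathcal{P}$ in the frame-independent rank-one form
\[
\rho = (\l+\e)\,\mathrm{Id} + (\e-\l)\, e_3\otimes e_3^\flat,
\]
which is precisely the assertion that its eigenvalues are $\{\l+\e,\l+\e,2\e\}$ with $e_3$ spanning the $2\e$-eigenline. Here $\e-\l\neq 0$ throughout $\mathcal{P}$, and although $e_3$ is only defined up to sign, the tensor $e_3\otimes e_3^\flat$ (hence $\rho$) is globally smooth, so the computation is legitimate and its conclusion is insensitive to the sign of $e_3$ (note $\n_{e_3}e_3$ is itself sign-independent). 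Next I would compute $\di\rho$ directly from this expression, applying the Leibniz rule to the rank-one term and tracing $(\n_X\rho)(Y)$ over an orthonormal frame. The contributions surviving the trace are $\grad\l$, a multiple of $e_3$ assembled from $e_3(\l)$ and from $\tr A=\di(e_3)$, and the term $(\e-\l)\,\n_{e_3}e_3$ produced by differentiating $e_3$ in its own direction.

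The payoff is the contracted Bianchi identity $\di\rho=\tfrac12\grad S$. Since $S=2\l+4\e$ gives $\tfrac12\grad S=\grad\l$, the two $\grad\l$ terms cancel and I am left with an identity of the shape
\[
(\e-\l)\,\n_{e_3}e_3 = \big(e_3(\l)-(\e-\l)\tr A\big)\,e_3 .
\]
The right-hand side is parallel to $e_3$, whereas $\n_{e_3}e_3\perp e_3$ because $e_3$ is a unit field; as $\e-\l$ never vanishes on $\mathcal{P}$, both sides must vanish, yielding $\n_{e_3}e_3=0$ (and, as a byproduct, the scalar relation $e_3(\l)=(\e-\l)\tr A$). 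The main obstacle is conceptual rather than computational: a frontal attack, writing $\n_{e_3}e_3=p\,e_1+q\,e_2$ and expanding the Jacobi-operator conditions $R(e_i,e_3)e_3=\e e_i$, produces \emph{coupled first-order PDEs} in $p$ and $q$ that do not visibly force $p=q=0$ pointwise. The key realization is that tracing through the Bianchi identity collapses these conditions into a single algebraic relation whose component transverse to $e_3$ is exactly $(\e-\l)\,\n_{e_3}e_3$.
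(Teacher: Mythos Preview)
Your argument is correct, and it takes a genuinely different route from the paper's. The paper applies the \emph{full} second Bianchi identity twice, once to the component $\langle(\n_{e_2}R)(e_1,e_3)e_3+\cdots,e_1\rangle$ and once to $\langle(\n_{e_1}R)(e_2,e_3)e_3+\cdots,e_2\rangle$, reading off directly that $(\l-\e)\langle\n_{e_3}e_3,e_2\rangle=0$ and $(\l-\e)\langle\n_{e_3}e_3,e_1\rangle=0$. You instead pass to the traced identity $\di\rho=\tfrac12\grad S$ and exploit the rank-one presentation $\rho=(\l+\e)\mathrm{Id}+(\e-\l)\,e_3\otimes e_3^\flat$; the Leibniz computation $\di\rho=\grad\l-e_3(\l)\,e_3+(\e-\l)\di(e_3)\,e_3+(\e-\l)\n_{e_3}e_3$ is exactly right, and matching with $\tfrac12\grad S=\grad\l$ isolates the $e_3^\perp$-part as claimed. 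What your approach buys is economy---a single tensorial identity rather than two componentwise Bianchi expansions---together with the scalar relation $e_3(\l)=(\e-\l)\tr A$ for free, which the paper proves separately (by a third Bianchi computation) as the first equation of Theorem~\ref{thm:basic-ode}. What the paper's approach buys is that it never needs to package $\rho$ globally or worry about sign ambiguities in $e_3$: it works entirely at a point with a chosen frame, and the curvature-component bookkeeping, while longer, is completely elementary.
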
  

\begin{proof}
We use the differential Bianchi identity twice. First
$$
\langle(\n_{e_2}R)(e_1, e_3)e_3, e_1 \rangle + \langle(\n_{e_1}R)(e_3, e_2)e_3, e_1 \rangle + \langle(\n_{e_3}R)(e_2, e_1)e_3, e_1 \rangle=0
$$ 
and the curvature conditions:  $$\langle R(e_1, e_3)e_3, e_1 \rangle = \e=\langle R(e_2, e_3)e_3, e_2 \rangle,$$ $$\langle R(e_1, e_2)e_2, e_1 \rangle = \lambda,$$and $$\langle R(e_i, e_j)e_k, e_\ell \rangle = 0,$$ when three of $i,j,k,\ell$ are distinct, imply that:
$$
(\l - \e) \langle \n_{e_3} e_3, e_2 \rangle = 0.
$$
Next, 
$$
\langle(\n_{e_1}R)(e_2, e_3)e_3, e_2 \rangle + \langle(\n_{e_2}R)(e_3, e_1)e_3, e_2 \rangle + \langle(\n_{e_3}R)(e_1, e_2)e_3, e_2 \rangle=0,
$$ 
 implies that:
$$
(\l - \e) \langle \n_{e_3} e_3, e_1 \rangle = 0.
$$
The result follows.
\end{proof}

\medskip

Next, we describe a decomposition of $\mathcal{P}$ into two sets.  Let $\mathcal{P}_1=\{-\det A_0>0\}$ and $\mathcal{P}_2=\{\det A_0=0\}$.  Clearly, $\mathcal{P}$ is the disjoint union of the open subset $\mathcal{P}_1$ and the closed subset $\mathcal{P}_{2}$.  Recall that a subset of a foliated space is said to be \textit{saturated} if it is a union of entire leaves of the foliation.

\begin{lem}
\label{saturation}
The subsets $\mathcal{P}_1$ and $\mathcal{P}_2$ are saturated.
\end{lem}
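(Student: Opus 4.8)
The plan is to show that the smooth function $h := -\det A_0 = \sigma^2 + \tau^2$ satisfies a linear homogeneous first-order ordinary differential equation along each $e_3$-geodesic. Uniqueness of solutions then forces $h$ to be either identically zero or nowhere vanishing on each leaf, which is exactly the assertion that the level set $\{h = 0\} = \mathcal{P}_2$, and hence its complement $\mathcal{P}_1 = \{h > 0\}$, is saturated.

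First I would differentiate $h$ in the $e_3$ direction. The two curvature identities (\ref{equ:curvature--sum2}) and (\ref{equ:curvature--difference2}) read $e_3(\tau) = -2c\sigma - (\tr A)\tau$ and $e_3(\sigma) = 2c\tau - (\tr A)\sigma$, so I compute
\begin{equation*}
e_3(h) = 2\sigma\, e_3(\sigma) + 2\tau\, e_3(\tau) = \big(4c\sigma\tau - 2(\tr A)\sigma^2\big) + \big(-4c\sigma\tau - 2(\tr A)\tau^2\big) = -2(\tr A)\, h.
\end{equation*}
The frame-dependent terms involving $c$ cancel, and the remaining coefficient $\tr A$ is, as noted above, a smooth function on $\mathcal{P}$ defined independently of the choice of adapted frame. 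Thus the relation $e_3(h) = -2(\tr A)\, h$ is intrinsic.

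Next, fix an $e_3$-geodesic $\gamma$ and parametrize it by arc length, so that restricting to $\gamma$ turns $e_3$ into $d/dt$. Because the leaves of the foliation furnished by Theorem \ref{thm:e_3-geodesics} lie entirely in $\mathcal{P}$, the function $t \mapsto (\tr A)(\gamma(t))$ is smooth along the whole leaf, and $h(t) := h(\gamma(t))$ solves the scalar linear ODE $h'(t) = -2\,(\tr A)(\gamma(t))\, h(t)$. Integrating gives $h(t) = h(t_0)\exp\!\big(-2\int_{t_0}^{t}(\tr A)(\gamma(s))\,ds\big)$, so $h(t_0) = 0$ forces $h \equiv 0$ on $\gamma$, while $h(t_0) > 0$ forces $h > 0$ on all of $\gamma$. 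This shows $\mathcal{P}_2 = \{h = 0\}$ is saturated; since the leaves are connected and partition $\mathcal{P}$ into $\mathcal{P}_1$ and $\mathcal{P}_2$, the complementary set $\mathcal{P}_1 = \{-\det A_0 > 0\}$ is saturated as well.

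I do not anticipate a serious obstacle: the substance is the clean cancellation of the $c$-terms that collapses the coupled system for $(\sigma,\tau)$ into a single linear scalar equation for $h$. The only point requiring care is the justification that the coefficient $\tr A$ is defined and smooth along the \emph{entire} leaf, so that the uniqueness argument applies globally rather than merely locally; this follows because $e_3$-geodesics remain in $\mathcal{P}$, where $\tr A$ is a globally defined smooth function.
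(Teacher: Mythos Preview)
Your proof is correct and follows essentially the same approach as the paper: both derive from (\ref{equ:curvature--sum2}) and (\ref{equ:curvature--difference2}) the linear ODE $\frac{d}{dt}(\det A_0) = -2(\tr A)\det A_0$ along an $e_3$-geodesic and invoke uniqueness of solutions. Your version is slightly more explicit about the cancellation of the $c$-terms and the frame-independence of the resulting equation, but the substance is identical.
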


\begin{proof}
It suffices to prove that $\mathcal{P}_2$ is a saturated subset.  Let $p \in \mathcal{P}_2$ and let $\g(t)$ be the  $e_3$-geodesic with $\g(0)=p$ and $\dot{\g}(0)=e_3$. Let $\det A_0(t)=\det A_0(\g(t))$.  By (\ref{equ:curvature--sum2}) and (\ref{equ:curvature--difference2}),  $\det A_0(t)$ satisfies the ode $$\frac{d}{dt}(\det A_0)=-2\tr A(\det A_0).$$  As $\det A_0(0)=0$, it follows that $\det A_0(t) \equiv 0$, concluding the proof.

\end{proof}

Note that at points $p \in  \mathcal{P}_2$, the matrix $A_0=0$ independent of the choice of an adapted frame.  This immediately implies:

\begin{lem}
\label{type2frame}
At each point $p \in  \mathcal{P}_2$ and for any choice of adapted frame at $p$, $$A=\begin{pmatrix} a & b \\
-b & a \end{pmatrix}. $$
\end{lem}

In contrast, at points $p \in \mathcal{P}_1$ the entries of the matrix $A_0$ depend on a choice of an adapted frame.  In subsequent sections, the following lemma is used to pick out a particular adapted framing at points in $\mathcal{P}_1$.

\begin{lem}
\label{type1frame}
At each point $p \in \mathcal{P}_1$, there are precisely two adapted framings for which the matrix $A_0$ has the form $$A_0=\begin{pmatrix} \sigma & 0 \\
0 & -\sigma \end{pmatrix}$$ with $\sigma=\sqrt{-\det A_0}>0$.  Moreover, these two adapted framings differ by the element $T(\pi) \in \SO(2)$.
\end{lem}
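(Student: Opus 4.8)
The plan is to use the explicit transformation rule (\ref{equ:transformA_0}) for $A_0$ under a change of adapted frame and solve directly for the rotation angles producing the asserted diagonal form. Fix any adapted frame at $p \in \mathcal{P}_1$, so that $A_0 = \begin{pmatrix} \sigma & \tau \\ \tau & -\sigma \end{pmatrix}$ with $\sigma^2 + \tau^2 = -\det A_0 > 0$ by definition of $\mathcal{P}_1$. Since any other adapted frame is obtained by conjugating by some $T_\theta \in \SO(2)$, and distinct values of $\theta$ in $\mathbb{R}/2\pi\mathbb{Z}$ give distinct adapted frames, the lemma reduces to finding all $\theta \in \mathbb{R}/2\pi\mathbb{Z}$ for which the matrix in (\ref{equ:transformA_0}) is diagonal with positive $(1,1)$-entry.

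First I would record the two conditions read off from (\ref{equ:transformA_0}): the off-diagonal entry $\tau\cos 2\theta - \sigma\sin 2\theta$ must vanish, and the $(1,1)$-entry $\sigma\cos 2\theta + \tau\sin 2\theta$ must equal $+\sqrt{\sigma^2 + \tau^2}$ (this being the requirement $\widetilde{\sigma} = \sqrt{-\det A_0} > 0$, which is consistent since conjugation preserves $-\det A_0 = \widetilde{\sigma}^2$). Writing $\sigma = r\cos\alpha$ and $\tau = r\sin\alpha$ with $r = \sqrt{\sigma^2 + \tau^2} > 0$, the off-diagonal entry becomes $r\sin(\alpha - 2\theta)$ and the $(1,1)$-entry becomes $r\cos(\alpha - 2\theta)$, so the two conditions read $\sin(\alpha - 2\theta) = 0$ and $\cos(\alpha - 2\theta) > 0$.

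Next I would solve the system. The vanishing condition forces $\alpha - 2\theta \in \pi\mathbb{Z}$, and the positivity condition selects the even multiples, i.e.\ $\alpha - 2\theta \in 2\pi\mathbb{Z}$, so $2\theta \equiv \alpha \pmod{2\pi}$. Modulo $2\pi$ this has exactly the two solutions $\theta \equiv \alpha/2$ and $\theta \equiv \alpha/2 + \pi$, which differ by $\pi$. Since $T_{\alpha/2 + \pi}\,T_{\alpha/2}^{-1} = T_\pi = -\mathrm{Id}$, the two corresponding adapted framings differ by $T(\pi)$, establishing both the count of two and the stated relation between them.

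The only point requiring care is the bookkeeping of the $\SO(2)$-parametrization. Adapted frames are genuinely parametrized by $\theta \in \mathbb{R}/2\pi\mathbb{Z}$, and the diagonalization condition $\sin(\alpha - 2\theta) = 0$ by itself yields four solutions spaced by $\pi/2$ (equivalently, two pairs differing by $\pi$); it is precisely the sign normalization $\widetilde{\sigma} > 0$ that discards one pair and leaves exactly two frames. I expect this sign-and-periodicity accounting to be the sole subtlety, since the underlying geometric content is merely that the traceless symmetric matrix $A_0$ has the two distinct eigenvalues $\pm\sqrt{\sigma^2 + \tau^2}$, whose positively oriented orthonormal eigenframes listing the $+$ eigenvector first are exactly two and are related by $-\mathrm{Id} = T_\pi$.
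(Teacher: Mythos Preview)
Your proof is correct and follows essentially the same approach as the paper: both read off the two conditions $\tau\cos 2\theta - \sigma\sin 2\theta = 0$ and $\sigma\cos 2\theta + \tau\sin 2\theta > 0$ from the transformation rule (\ref{equ:transformA_0}) and solve for $\theta$ modulo $2\pi$. Your polar substitution $\sigma = r\cos\alpha$, $\tau = r\sin\alpha$ neatly replaces the paper's case analysis on the sign of $\tau$, and your direct count of solutions modulo $2\pi$ collapses the paper's two-step argument (uniqueness on $[0,\pi)$ followed by the stabilizer computation) into one.
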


\begin{proof}
Let $p \in \mathcal{P}_1$ and fix a positively oriented framing $\{e_1,e_2\}$ of $e_3^{\perp}$ at $p$.  Suppose that with respect to this framing $$A_0=\begin{pmatrix} \sigma & \tau \\
\tau & -\sigma \end{pmatrix}$$ where $\sigma^2+\tau^2>0$.  We first claim that there is a unique $\theta \in [0,\pi)$ such that $T_{\theta}A_{0}T_{\theta}^{-1}$ has the desired form.  By (\ref{equ:transformA_0}) this is equivalent to proving that 

\begin{equation}\label{equality}
\tau \cos 2\theta -\sigma \sin 2 \theta=0
\end{equation} 
and 
\begin{equation}\label{inequality}
\sigma \cos 2 \theta + \tau \sin 2\theta>0
\end{equation} have a unique common solution $\theta \in [0,\pi)$.

First suppose that $\tau=0$.  Since $\sigma^2+\tau^2 >0$, we have that $\sigma \neq 0$.  By (\ref{equality}) $\sin 2\theta=0$ so that $\theta=0$ or $\theta=\frac{\pi}{2}$.  By (\ref{inequality}), $\theta=0$ is the unique common solution when $\sigma>0$ and $\theta=\frac{\pi}{2}$ is the unique common solution when $\sigma<0$.

Next suppose that $\tau>0$.  Equation (\ref{equality}) yields $$\cot 2 \theta=\frac{\sigma}{\tau}$$ which has precisely two solutions $\theta \in [0,\pi)$, one with $\theta \in (0,\frac{\pi}{2})$ and one with $\theta \in (\frac{\pi}{2},\pi)$.  It is easy to check that the former satisfies (\ref{inequality}) while the latter does not.  An analogous line of reasoning shows that there is a unique common solution when $\tau<0$, concluding the proof of our claim.

By the previous claim, we may assume that the positively oriented framing $\{e_1,e_2\}$ of $e_3^{\perp}$ from the beginning of the proof puts $A_0$ in the desired form $$A_0=\begin{pmatrix} \sigma & 0 \\
0 & -\sigma \end{pmatrix}$$ with $\sigma=\sqrt{- \det A_0}>0$.  To conclude the proof, we show that for  $\theta \in (0,2\pi)$, the matrix $T_{\theta}A_0T_{\theta}^{-1}=A_0$ if and only if $\theta=\pi$.  By (\ref{equality}), we necessarily have that $\sin 2\theta=0$ so that $\theta=\frac{\pi}{2}$, $\pi$, or $\frac{3\pi}{2}$.  Inequality (\ref{inequality}) additionally holds if and only if $\cos 2\theta =1$ or equivalently if and only if $\theta=\pi$.
\end{proof}

 \subsection{Evolution equations for $\tr A$, $\det A$, and $\l$ along $e_3$-geodesics.}\text{}

Restrict the three scalar functions $\tr A$, $\det A$ and $\lambda = R_{1221}$ on $\mathcal{P}$ to functions along an $e_3$-geodesic $\g$. Let $t$ be a parameter for $\g$ such that $e_3 = \frac{d}{dt}$. Then,

\begin{thm}
\label{thm:basic-ode}
Along the $e_3$-geodesic $\g(t)$ we have:
\begin{eqnarray}
\label{equ:ode-lambda}
&&\frac{d}{dt}  (\l-\e) =  - \tr A  (\l-\e)\\ \label{equ:ode-trace}
&&\frac{d}{dt}(\tr A) =2(\det A -\e) - (\tr A)^2  \\ \label{equ:ode-det}
&&\frac{d}{dt}  (\det A + \e) = - \tr A  (\det A + \e) 
\end{eqnarray}
\end{thm}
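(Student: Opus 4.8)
The plan is to establish the three evolution equations independently. The equations for $\tr A$ and $\det A$ are purely algebraic consequences of the curvature equations (\ref{equ:curvature--1331})--(\ref{equ:curvature--2313}), while the equation for $\l$ requires the second Bianchi identity, exactly as in the proof of Theorem \ref{thm:e_3-geodesics}. Throughout I use that $e_3 = \tfrac{d}{dt}$ along $\g$, so that $e_3(\cdot) = \tfrac{d}{dt}(\cdot)$, and that $\n_{e_3} e_3 = 0$ by Theorem \ref{thm:e_3-geodesics}.

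For the trace equation (\ref{equ:ode-trace}), I would simply add (\ref{equ:curvature--1331}) and (\ref{equ:curvature--2332}); the terms $\pm c(a_{12}+a_{21})$ cancel, leaving $-e_3(\tr A) - (a_{11}^2 + a_{22}^2 + 2a_{12}a_{21}) = 2\e$, and the algebraic identity $a_{11}^2 + a_{22}^2 + 2a_{12}a_{21} = (\tr A)^2 - 2\det A$ then yields (\ref{equ:ode-trace}) directly. For the determinant equation (\ref{equ:ode-det}), I would differentiate $\det A = a_{11}a_{22} - a_{12}a_{21}$ along $\g$ by the product rule and substitute $e_3(a_{11}), e_3(a_{22})$ from (\ref{equ:curvature--1331}), (\ref{equ:curvature--2332}) and $e_3(a_{12}), e_3(a_{21})$ from (\ref{equ:curvature--1323}), (\ref{equ:curvature--2313}). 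The key simplification I expect is that every term carrying the Christoffel symbol $c$ cancels; what survives then factors as $-\tr A(\det A + \e)$, which is (\ref{equ:ode-det}).

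The equation (\ref{equ:ode-lambda}) for $\l$ is where the real work lies, since none of the listed curvature equations contains $e_3(\l)$. Starting from $\l = \langle R(e_1,e_2)e_2, e_1\rangle$ and differentiating along $e_3$, the connection terms all vanish: since $\n_{e_3}e_1 = c\,e_2$ and $\n_{e_3}e_2 = -c\,e_1$, each produces a curvature component with a repeated index, so $e_3(\l) = (\n_{e_3}R)(e_1,e_2,e_2,e_1)$. Applying the second Bianchi identity to the cyclic triple $(e_3, e_1, e_2)$ rewrites this as $-(\n_{e_1}R)(e_2,e_3,e_2,e_1) - (\n_{e_2}R)(e_3,e_1,e_2,e_1)$. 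Expanding each term by the product rule and reading the covariant derivatives of the frame off (\ref{equ:Christoffel}), while using that components with three distinct indices vanish together with $R_{2121} = -\l$, $R_{2323} = -\e$, and $R_{3131} = -\e$, I expect the first term to collapse to $a_{11}(\l-\e)$ and the second to $a_{22}(\l - \e)$. Their negated sum is precisely $-\tr A(\l-\e)$, giving (\ref{equ:ode-lambda}).

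The main obstacle is the bookkeeping in this last computation: one must correctly identify which covariant-derivative-of-curvature terms survive once the many vanishing components are discarded. As an independent cross-check I would also run the contracted second Bianchi identity $\sum_i (\n_{e_i}\Ric)(e_i, e_3) = \tfrac{1}{2}e_3(S)$. Since $S = 2\l + 4\e$ and $\Ric = \mathrm{diag}(\l+\e, \l+\e, 2\e)$ in the adapted frame by Theorem \ref{thm:local-character}, the right-hand side equals $e_3(\l)$, while the left-hand side, evaluated with (\ref{equ:Christoffel}) and $\n_{e_3}e_3 = 0$, collapses to $-(a_{11}+a_{22})(\l-\e)$, reproducing (\ref{equ:ode-lambda}) and confirming the sign conventions.
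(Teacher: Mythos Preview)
Your proposal is correct and follows essentially the same route as the paper: adding (\ref{equ:curvature--1331}) and (\ref{equ:curvature--2332}) for the trace equation, differentiating $\det A$ via the product rule and substituting from (\ref{equ:curvature--1331}), (\ref{equ:curvature--2332}), (\ref{equ:curvature--1323}), (\ref{equ:curvature--2313}) for the determinant equation, and applying the second Bianchi identity on the cyclic triple $(e_3,e_1,e_2)$ together with the vanishing of mixed curvature components for the $\l$ equation. Your additional contracted-Bianchi cross-check is not in the paper but is a reasonable sanity check.
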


\begin{proof}
To prove the first equality, use the differential Bianchi identity:
$$
\langle(\n_{e_3}R)(e_1, e_2)e_2, e_1 \rangle + \langle(\n_{e_1}R)(e_2, e_3)e_2, e_1 \rangle + \langle(\n_{e_2}R)(e_3, e_1)e_2, e_1 \rangle=0
$$ 
and the curvatures $\langle R(e_1, e_2)e_2, e_1 \rangle = \lambda$, $\langle R(e_1, e_3)e_3, e_1 \rangle = \e$, $\langle R(e_2, e_3)e_3, e_2 \rangle = \e$, and $\langle R(e_i, e_j)e_k, e_\ell \rangle = 0$, if any three of $i,j,k,\ell$ differ to conclude that:
$$
e_3(\l) = - \tr A (\l -\e).
$$
The first equality follows.

\medskip

To prove the second equality, add (\ref{equ:curvature--1331}) and (\ref{equ:curvature--2332}) to derive

$$
e_3(a_{11} + a_{22}) =  - (a_{11})^2  - (a_{22})^2 - 2a_{12} a_{21} -2\e,
$$ from which the second equality easily follows.

\medskip

To prove the third equality, compute $e_3(a_{11}a_{22}-a_{12}a_{21})$ using (\ref{equ:curvature--1323}) and (\ref{equ:curvature--2313}) to derive $e_3(\det A) = - \tr A (\e + \det A)$. The result follows.

\end{proof}

The following are two immediate corollaries:

\begin{cor}\label{foliation}
Along any $e_3$-geodesic, $\l$ never assumes the value $\e$. Hence, the vector field $e_3$ is complete and ${\mathcal P}$ is foliated by complete $e_3$-geodesics. 
\end{cor}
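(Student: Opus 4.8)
The plan is to extract both conclusions of Corollary \ref{foliation} from the single ode \eqref{equ:ode-lambda}, namely $\frac{d}{dt}(\l-\e) = -\tr A \,(\l-\e)$, combined with the fact that $e_3$-geodesics are, by definition, the integral curves of the complete-unit-speed vector field $e_3$ only on the open set $\mathcal{P}$. I first observe that \eqref{equ:ode-lambda} is a first-order linear homogeneous ode in the quantity $u(t) := (\l-\e)(\g(t))$, so by uniqueness of solutions to linear odes it has the explicit form $u(t) = u(0)\exp\!\left(-\int_0^t \tr A\,ds\right)$. Since the exponential factor never vanishes, $u(t)$ is identically zero if $u(0)=0$ and is everywhere nonzero if $u(0)\neq 0$.

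The key step is the following. Start with any $e_3$-geodesic, parametrized by $t$ over its maximal interval of definition. By the definition of $\mathcal{P}$ and Theorem \ref{thm:local-character}, at the base point $p=\g(0)$ we have $\l_p\neq\e$, i.e. $u(0)\neq 0$; hence by the displayed exponential formula $u(t)\neq 0$, so $\l\neq\e$, at every parameter value $t$ for which $\g(t)$ is defined and lies in $\mathcal{P}$. This is exactly the statement that \emph{$\l$ never assumes the value $\e$} along the geodesic. The point of this observation is that it prevents the geodesic from ever running into the isotropic set $\mathcal{I}=\{\l=\e\}$: as long as the geodesic stays inside $M$ it stays inside $\mathcal{P}$, because hitting $\mathcal{I}$ would force $u=0$, contradicting what we just showed.

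Next I would upgrade this to completeness of $e_3$. The vector field $e_3$ is a smooth unit vector field defined only on the open set $\mathcal{P}$, so a priori its integral curves might fail to be complete either by reaching the boundary $\partial\mathcal{P}=\mathcal{I}$ in finite time or by escaping to infinity. The computation above rules out the first possibility, since the integral curve can never approach $\mathcal{I}$ (where $e_3$ is undefined). For the second, I would invoke that the $e_3$-geodesics are honest geodesics of $M$ by Theorem \ref{thm:e_3-geodesics} ($\n_{e_3}e_3=0$) and that $M$ is complete (this hypothesis enters here); a unit-speed geodesic in a complete manifold extends to all of $\R$. The only subtlety is that geodesic completeness of $M$ gives a geodesic defined on all of $\R$, and the $\l\neq\e$ conclusion then guarantees that this entire real-parameter geodesic remains in $\mathcal{P}$, so it is a genuine $e_3$-geodesic for all time. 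Therefore $e_3$ is complete and $\mathcal{P}$ is foliated by complete $e_3$-geodesics.

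The main obstacle is the second half: reconciling completeness of $M$ as a manifold with the fact that $e_3$ lives only on the proper open subset $\mathcal{P}$. The clean way around it is precisely the order above — use the ode to show the geodesic cannot limit onto $\mathcal{I}$, and only then use completeness of $M$ to extend the underlying geodesic to all of $\R$, concluding that the extended curve stays in $\mathcal{P}$. One must be careful not to argue in the reverse order, since an arbitrary geodesic of $M$ emanating from a point of $\mathcal{P}$ has no reason to stay in $\mathcal{P}$ without the ode input; it is the nonvanishing of the exponential factor that does the essential work.
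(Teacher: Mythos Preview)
Your approach is exactly what the paper has in mind (the corollary is stated without proof as immediate from Theorem~\ref{thm:basic-ode}), and your handling of the completeness of the $e_3$-flow via geodesic completeness of $M$ is correct. There is, however, one logical slip worth flagging. You write that ``hitting $\mathcal{I}$ would force $u=0$, contradicting what we just showed,'' but what you established is $u(t)\neq 0$ only for parameters with $\g(t)\in\mathcal{P}$; a hypothetical first exit time $T$ has $\g(T)\in\mathcal{I}$, and $u(T)=0$ there is not by itself a contradiction. Concretely, nothing in the single ode \eqref{equ:ode-lambda} rules out $\int_0^t\tr A\,ds\to+\infty$ as $t\to T^-$, which would make your exponential formula send $u(t)\to 0$ consistently with the smoothness of $\l$ on $M$. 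The fix uses the remaining odes of Theorem~\ref{thm:basic-ode}: from \eqref{equ:ode-trace} and \eqref{equ:ode-det} one checks (this is the computation in the proof of Theorem~\ref{odesolution}) that $\ell(t)=\exp\big(\int_0^t\tr A\,ds\big)$ satisfies the constant-coefficient linear equation $\ell''+4\e\,\ell=2k$ on $[0,T)$, hence agrees there with a globally defined solution that is bounded on $[0,T]$. Therefore $\int_0^t\tr A\,ds=\log\ell(t)$ is bounded above, $u(t)$ stays bounded away from $0$, and the desired contradiction with $u(T)=0$ now goes through.
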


\begin{cor}\label{tr-det}
Along any $e_3$-geodesic, $\tr A \equiv 0$ if and only if $\det A\equiv \e.$ 
\end{cor}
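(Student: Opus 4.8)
The plan is to prove Corollary \ref{tr-det} directly from the evolution equation (\ref{equ:ode-det}) for $\det A + \e$ together with the trace equation (\ref{equ:ode-trace}). I would begin by establishing the implication $\det A \equiv \e \Rightarrow \tr A \equiv 0$, which is the easy direction: if $\det A(t) = \e$ for all $t$ along the $e_3$-geodesic, then $\det A - \e \equiv 0$, so the right-hand side of (\ref{equ:ode-trace}) reduces to $-(\tr A)^2$, giving the autonomous equation $\frac{d}{dt}(\tr A) = -(\tr A)^2$. A priori this does not force $\tr A \equiv 0$; instead I would substitute back: differentiating the identity $\det A \equiv \e$ and comparing with (\ref{equ:ode-det}) gives $0 = \frac{d}{dt}(\det A + \e) = -\tr A\,(\det A + \e) = -\tr A\,(2\e)$, so the conclusion $\tr A \equiv 0$ follows provided $\e \neq 0$.

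For the reverse implication, $\tr A \equiv 0 \Rightarrow \det A \equiv \e$, I would feed $\tr A \equiv 0$ into (\ref{equ:ode-trace}), whose left-hand side then vanishes, yielding $0 = 2(\det A - \e) - 0$, hence $\det A \equiv \e$ immediately, with no integration required. This direction is clean and holds regardless of the value of $\e$.

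The main obstacle I anticipate is the role of $\e = 0$, where the argument above for the first implication degenerates: when $\e = 0$, equation (\ref{equ:ode-det}) only gives $\frac{d}{dt}(\det A) = -\tr A\,(\det A)$, and knowing $\det A \equiv 0$ is consistent with any solution of $\frac{d}{dt}(\tr A) = -(\tr A)^2$, which has nonzero solutions $\tr A(t) = 1/(t - t_0)$. However, such solutions blow up in finite time, contradicting the completeness of the $e_3$-geodesics established in Corollary \ref{foliation} (the Christoffel symbols, and hence $\tr A$, are smooth along the complete geodesic $\g:\R \to \mathcal{P}$). So the cleanest uniform treatment is to observe that $\tr A$ is a globally defined smooth function along the entire complete geodesic; the equation $\frac{d}{dt}(\tr A) = -(\tr A)^2$ then forces $\tr A \equiv 0$, since any nonvanishing initial value produces a solution with a finite-time singularity. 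I would state the corollary's proof by invoking completeness from Corollary \ref{foliation} to rule out the finite-time blow-up, making both directions valid for all three values of $\e$.

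Thus the proof reduces to: (1) the reverse implication by direct substitution into (\ref{equ:ode-trace}); and (2) the forward implication by combining (\ref{equ:ode-trace}) with completeness of the $e_3$-geodesics to eliminate the singular solutions of $\frac{d}{dt}(\tr A) = -(\tr A)^2$.
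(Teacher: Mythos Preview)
Your proposal is correct and essentially matches the paper's intent. The paper offers no explicit proof, listing this as an ``immediate corollary'' of Theorem~\ref{thm:basic-ode}; your argument fills in precisely the details one would expect: the reverse implication is read off directly from (\ref{equ:ode-trace}), and the forward implication comes from substituting $\det A \equiv \e$ into (\ref{equ:ode-trace}) to obtain $(\tr A)' = -(\tr A)^2$, then invoking completeness of the $e_3$-geodesic (Corollary~\ref{foliation}) to rule out the blow-up solutions. Your initial detour through (\ref{equ:ode-det}) for the case $\e\neq 0$ is unnecessary---as you yourself observe, the blow-up argument via (\ref{equ:ode-trace}) handles all three values of $\e$ uniformly---so the final version you summarize is the cleanest and is exactly what the paper has in mind.
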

\medskip

We conclude this section with a general form solution to the coupled odes (\ref{equ:ode-trace}) and (\ref{equ:ode-det}).  Explicit form solutions when $\e=-1$ or $\e=0$ are given at the beginning of each relevant section.

\begin{thm}
\label{odesolution}

Along the $e_3$-geodesic  $\g$ let $\ell(t)$ denote the solution to
$$
\ell''+4\e \ell=2k, \,\,\, k=\det A(0)+\e
$$ 
with initial conditions 
$$
\ell(0)=1,\,\,\,\ell'(0)=\tr A(0).
$$

Then $\ell(t)>0$ for each $t \in \R$ and
$$\tr A=\frac{\ell'}{\ell}
$$

and 

$$
\det A + \e =\frac{1}{\ell}( \det A(0) +\e).
$$

\end{thm}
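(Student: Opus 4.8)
The plan is to recognize the coupled system (\ref{equ:ode-trace})--(\ref{equ:ode-det}) as a Riccati equation for $\tr A$ coupled to a first-order linear equation for $\det A + \e$, and to linearize it through the classical substitution $\tr A = \ell'/\ell$. Rather than beginning with $\ell$ and dividing by it (which would require knowing in advance that $\ell$ has no zeros), I would construct the relevant auxiliary function directly as an exponential, so that its positivity comes for free, and only then identify it with $\ell$ via uniqueness.

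First I would record that along the $e_3$-geodesic $\g$ the functions $\tr A$ and $\det A$ are smooth on all of $\R$: by Corollary \ref{foliation} the geodesic is complete and stays in $\mathcal{P}$, so these functions are defined for every $t \in \R$. This is the only place completeness is needed, and it is exactly what licenses integrating $\tr A$ over all of $\R$. I would then set $m(t) = \exp\!\big(\int_0^t \tr A\, ds\big)$, which is smooth and strictly positive on $\R$, satisfies $m(0)=1$ and $m' = (\tr A)\,m$, and hence gives $m'(0) = \tr A(0)$ and $m'/m = \tr A$.

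The heart of the argument is two short computations using (\ref{equ:ode-det}) and (\ref{equ:ode-trace}). Differentiating $(\det A + \e)\,m$ and substituting (\ref{equ:ode-det}) together with $m' = (\tr A)m$ shows this product has vanishing derivative, so $(\det A + \e)\,m \equiv \det A(0) + \e = k$; that is, $\det A + \e = k/m$. Differentiating $m' = (\tr A)\,m$ a second time gives $m'' = \big[(\tr A)' + (\tr A)^2\big] m$, and (\ref{equ:ode-trace}) in the form $(\tr A)' + (\tr A)^2 = 2(\det A - \e)$, combined with $\det A = k/m - \e$, yields $m'' = (2k/m - 4\e)m = 2k - 4\e m$, i.e. $m'' + 4\e m = 2k$. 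Thus $m$ solves precisely the linear initial value problem that defines $\ell$, so by uniqueness for the linear second-order ODE $\ell = m$. In particular $\ell > 0$ everywhere since $m$ is an exponential, and the stated formulas $\tr A = \ell'/\ell$ and $\det A + \e = k/\ell = (\det A(0)+\e)/\ell$ follow at once.

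I expect no serious obstacle in the computations themselves, which are direct substitutions into (\ref{equ:ode-trace}) and (\ref{equ:ode-det}); the only real subtlety is logical ordering. Attacking $\ell$ directly would force a continuation argument to exclude zeros of $\ell$ before one could even form the quotient $\ell'/\ell$. Building $m$ as an exponential avoids this entirely, since positivity is automatic and is then transported to $\ell$ by uniqueness. Getting this ordering right, so that positivity is secured \emph{before} dividing, is the main point to be careful about.
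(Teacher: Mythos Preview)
Your proposal is correct and follows essentially the same approach as the paper: define the exponential $\exp\!\big(\int_0^t \tr A\big)$, use (\ref{equ:ode-det}) to get the formula for $\det A + \e$, and use (\ref{equ:ode-trace}) to verify the second-order linear ODE. Your version is slightly more careful in distinguishing the auxiliary function $m$ from the ODE solution $\ell$ until uniqueness is invoked, and in making explicit the appeal to Corollary~\ref{foliation} for completeness, but the underlying argument is identical.
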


\begin{proof}
To solve (\ref{equ:ode-trace}) and (\ref{equ:ode-det}) introduce the function $\ell(t) = \exp(\int_0^t (\tr A) d\t)$ so that $\tr A = \frac{\ell'}{\ell}$, $\ell(0)=1$, $\ell'(0)=\tr A(0)$, and $\ell(t)>0$ for all $t\in \R$.  Then from (\ref{equ:ode-det}) we have:
\begin{equation}
(\det A + \e)' = -\frac{\ell'}{\ell} (\det A + \e)
\end{equation}
Hence,
\begin{equation}
\ell (\det A + \e)= k,
\end{equation}
where $k = \det A(0) + \e$ is a constant. 
From (\ref{equ:ode-trace}) we have:
\begin{equation}
\frac{\ell'' \ell - (\ell')^2}{\ell^2}= \bigg( \frac{\ell'}{\ell}\bigg)' = 2(\frac{k}{\ell}-2\e) -\frac{(\ell')^2}{\ell^2}\end{equation}

Hence,
\begin{equation} 
\frac{\ell''}{\ell} = \frac{2k}{\ell}-4\e
\end{equation}
Simplifying, we have:
\begin{equation}
\label{equ:solving2}
(\ell)''+ 4 \e \ell = 2k
\end{equation}

\end{proof}

By (\ref{equ:curvature--difference}), (\ref{equ:ode-lambda}), and (\ref{equ:ode-det}),  $b$, $\l-\e$, and $\det A+\e$ satisfy the same linear ode along an $e_3$-geodesic $\g$. Hence:

\begin{cor}\label{lambda-det-b}
For each $e_3$-geodesic $\g$, there exists constants $K$ and $C$ such that $\det A+\e=K(\l-\e)$ and $b=C(\l-\e)$.
\end{cor}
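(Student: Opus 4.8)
The plan is to recognize that the three quantities $\det A + \e$, $\l - \e$, and $b$ all obey \emph{the same} first-order linear homogeneous ODE along a fixed $e_3$-geodesic $\g$, and then to invoke the elementary uniqueness principle that any two solutions of such an ODE are constant multiples of one another. Concretely, equation (\ref{equ:ode-det}) gives $\tfrac{d}{dt}(\det A + \e) = -\tr A\,(\det A + \e)$, equation (\ref{equ:ode-lambda}) gives $\tfrac{d}{dt}(\l - \e) = -\tr A\,(\l - \e)$, and equation (\ref{equ:curvature--difference}) gives $e_3(b) + (\tr A)b = 0$, i.e.\ $\tfrac{d}{dt}b = -\tr A\,b$. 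Writing $p(t) = \tr A(\g(t))$, all three functions $u(t)$ satisfy the single ODE $u' = -p(t)\,u$ along $\g$.

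The key step is to use the solution $\ell(t)$ from Theorem \ref{odesolution} as a common integrating factor. Since $\tr A = \ell'/\ell$ along $\g$ with $\ell(t) > 0$ everywhere, any function $u$ solving $u' = -(\ell'/\ell)u$ satisfies $(\ell u)' = \ell' u + \ell u' = \ell' u - \ell'u = 0$, so $\ell u$ is constant along $\g$. Applying this to $u = \det A + \e$, to $u = \l - \e$, and to $u = b$ shows that each of the products $\ell\cdot(\det A + \e)$, $\ell\cdot(\l-\e)$, and $\ell\cdot b$ is a constant along $\g$; call these constants $c_1$, $c_2$, and $c_3$ respectively. By Corollary \ref{foliation}, $\l - \e$ never vanishes along $\g$, hence $c_2 = \ell\cdot(\l-\e) \neq 0$. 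Therefore we may set $K = c_1/c_2$ and $C = c_3/c_2$, and dividing yields
\begin{equation*}
\frac{\det A + \e}{\l - \e} = \frac{c_1}{c_2} = K, \qquad \frac{b}{\l - \e} = \frac{c_3}{c_2} = C,
\end{equation*}
which rearrange to the claimed identities $\det A + \e = K(\l - \e)$ and $b = C(\l - \e)$ along $\g$.

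I do not expect any genuine obstacle here, as the result is essentially a restatement of the linear-ODE uniqueness argument already used in Theorem \ref{odesolution} and Lemma \ref{saturation}. The only point requiring a little care is the nonvanishing of the denominator: one must confirm via Corollary \ref{foliation} that $\l - \e \neq 0$ along the entire geodesic before forming the ratios, so that $K$ and $C$ are well-defined finite constants. It is also worth noting that $K$ and $C$ depend on the particular $e_3$-geodesic $\g$ (through the initial data at $t=0$), which is exactly what the phrasing ``for each $e_3$-geodesic $\g$'' in the statement anticipates.
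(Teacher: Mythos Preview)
Your proof is correct and follows essentially the same approach as the paper: the paper simply observes that by (\ref{equ:curvature--difference}), (\ref{equ:ode-lambda}), and (\ref{equ:ode-det}) the three quantities $b$, $\l-\e$, and $\det A+\e$ satisfy the same first-order linear ODE along $\g$, and your argument via the integrating factor $\ell$ from Theorem~\ref{odesolution} is precisely the explicit justification of that one-line remark. Your explicit invocation of Corollary~\ref{foliation} to guarantee $\l-\e\neq 0$ (so that $K$ and $C$ are well defined) is a detail the paper leaves implicit.
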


\section{\bf Homogeneous $\cvc(\e)$ three-manifolds with extremal curvature $\e$}
\label{section:homogeneous}

A good general reference for this section is \cite{mi}.

In this section, $M$ denotes a connected, simply-connected, complete, and homogeneous three-manifold with $\cvc(\e)$ and extremal curvature $\e$.  We continue to use the notation of Section \ref{section:notation}.  

Sekigawa \cite{se} classified the connected, simply-connected, complete, and homogeneous three-manifolds.  They either have constant sectional curvatures, or are isometric to a product $\Sigma \times \R$ where $\Sigma$ is a complete and simply-connected surface with constant sectional curvatures, or are isometric to a connected and simply-connected three-dimensional Lie group endowed with a left-invariant metric.

If $M$ has an isotropic point, then all points are isotropic by homogeneity and $M$ is a space form of curvature $\e$.  The products $S^2 \times \R$ and $\H^2 \times \R$ have no isotropic points and $\cvc(0)$.  It remains to classify the left-invariant metrics on connected and simply-connected three dimensional Lie groups with $\cvc(\e)$, extremal curvature $\e$, and no isotropic points. 

Let $G$ denote a connected and simply-connected three-dimensional Lie group endowed with a left-invariant Riemannian metric determined by an inner-product $\langle \cdot,\cdot \rangle$ on its Lie algebra $\mathfrak{g}$. We assume that $G$ has $\cvc(\e)$, extremal curvature $\e$, and no isotropic points.  By Theorem \ref{thm:local-character} there exists an orthonormal framing $\{\bar{e}_1, \bar{e}_2, e_3\}$ of $\mathfrak{g}$ that diagonalizes the Ricci tensor where the vector $e_3$ is contained in all curvature $\epsilon$ planes.  As left-translation acts by orientation preserving isometries of $G$, $\{\bar{e}_1,\bar{e}_2,e_3\}$ extends to a left-invariant adapted framing on all of $G$.

As this framing is left-invariant, the Christoffel symbols (\ref{equ:Christoffel}) are \textit{constant} functions on $G$.  As $\lambda-\epsilon$ is a non-zero constant function on $G$,  Theorem \ref{thm:basic-ode} implies that $\tr A=0$ and $\det A =\epsilon$.  Next, we consider the normal forms for $A$ and in particular, $a$, $b$, $\sigma$ and $\tau$ as defined in Section \ref{section:notation}.

We have that  $a=\frac{1}{2} \tr A =\frac{1}{2}(a_{11}+a_{22})=0$. Consequently, $a_{11}=-a_{22}$ and $\sigma=\frac{1}{2}(a_{11}-a_{22})=a_{11}=-a_{22}$.  By possibly replacing $e_3$ with $-e_3$, we will assume that $b=\frac{1}{2}(a_{12}-a_{21}) \geq 0$.    By Lemmas \ref{type2frame} and \ref{type1frame}, we may assume that $\tau=0$, or equivalently that $a_{12}=-a_{21}$.  Consequently, $b=a_{12}=-a_{21}\geq 0$ and moreover, with respect to the left-invariant adapted framing $\{\bar{e}_1,\bar{e}_2,e_3\}$ we have that $$A=  \begin{pmatrix} \sigma & b  \\
 -b & -\sigma  \end{pmatrix}  \\
$$  where $b\geq 0$, $\sigma>0$ if $G=\mathcal{P}_1$, and $\sigma=0$ if $G=\mathcal{P}_2$.
\medskip

The curvature equations (\ref{equ:curvature--1221})-(\ref{equ:curvature--2313}) for this left-invariant adapted framing reduce to:

\bigskip

From $R_{1221} = \l$:
\begin{equation}
\label{equ:curvature--1221-hom}
  f^2 + g^2 + 2cb+ \epsilon =- \l
\end{equation}
From $R_{1331} = \epsilon$:
\begin{equation}
\label{equ:curvature--1331-hom}
 b^2-\sigma^2=\epsilon
\end{equation}
From $R_{2332} = \epsilon$ :
\begin{equation}
\label{equ:curvature--2332-hom}
 b^2-\sigma^2 = \epsilon
\end{equation}
From $R_{1213} = 0$ :
\begin{equation}
\label{equ:curvature--1213-hom}
2g \sigma= 0
\end{equation}
From $R_{1223} = 0$ :
\begin{equation}
\label{equ:curvature--1223-hom}
2f \sigma= 0
\end{equation}
From $R_{1312} = 0$ :
\begin{equation}
\label{equ:curvature--1312-hom}
g\sigma + f(c - b)= 0
\end{equation}
From $R_{2312} = 0$ :
\begin{equation}
\label{equ:curvature--2312-hom}
f\sigma + g(c -b)= 0
\end{equation}
From $R_{1323} = 0$ :
\begin{equation}
\label{equ:curvature--1323-hom}
 2c\sigma = 0
\end{equation}
From $R_{2313} = 0$ :
\begin{equation}
\label{equ:curvature--2313-hom}
 2c\sigma = 0
\end{equation}

\medskip

Consider the linear transformation $L:\mathfrak{g} \rightarrow \mathfrak{g}$ defined by extending $$L(\bar{e}_1)=[\bar{e}_2,e_3],\,\,\,\, L(\bar{e}_2)=[e_3,\bar{e}_1],\,\,\,\, L(e_3)=[\bar{e}_1,\bar{e}_2]$$ linearly.  Then the Lie group $G$ is unimodular if and only if $L$ is self-adjoint \cite{mi}.  In this case, the Lie algebra $\mathfrak{g}$ is determined by the signs of the eigenvalues of $L$ as described by \cite[p. 307]{mi}.   

With respect to the framing $\{\bar{e}_1,\bar{e}_2,e_3\}$ described above, we have that 

$$
L=  \begin{pmatrix} c+a_{21} & -\sigma & g \\
 -\sigma & c-a_{12} & -f \\
0 & 0 & -2b \end{pmatrix}  \\
$$

Hence $G$ is unimodular if and only if $f=g=0$.  Our first lemma characterizes when $G$ Riemannian covers a finite volume manifold:

\begin{lem}
\label{finvolquot}
Assume that $G$ is a connected and simply-connected three-dimensional Lie group endowed with a left-invariant metric of $\cvc(\e)$, extremal curvature $\e$, and with no isotropic points .  Let $\{\bar{e}_1, \bar{e}_2, e_3\}$ be a left-invariant framing of $G$ as described above.  Then $G$ Riemannian covers a finite volume manifold if and only if $f=g=0$. 
\end{lem}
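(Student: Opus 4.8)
The plan is to reduce the assertion to the unimodularity of $G$ and then appeal to the standard relationship between unimodularity and the existence of finite volume quotients. The discussion preceding the lemma already shows that $G$ is unimodular precisely when $L$ is self-adjoint, which for the left-invariant adapted framing $\{\bar{e}_1,\bar{e}_2,e_3\}$ occurs if and only if $f=g=0$. It therefore suffices to prove that $G$ Riemannian covers a finite volume manifold if and only if $G$ is unimodular.

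For the forward implication I would make the obstruction to unimodularity geometric. From the Christoffel symbols (\ref{equ:Christoffel}) a direct computation gives $\di(\bar{e}_1)=f$, $\di(\bar{e}_2)=g$, and $\di(e_3)=\tr A=0$; equivalently $\tr(\operatorname{ad}_{\bar{e}_1})=-f$ and $\tr(\operatorname{ad}_{\bar{e}_2})=-g$, so the left-invariant field $W=f\bar{e}_1+g\bar{e}_2$ has constant divergence $\di(W)=f^2+g^2$. Being left-invariant, the flow of $W$ is by right translations and hence distorts Riemannian volume by the factor $e^{(f^2+g^2)s}$. Thus if $G$ covers a finite volume manifold, unimodularity must follow: in the model situation where the deck group consists of left translations, this flow descends to volume-distorting diffeomorphisms of a finite volume manifold, which is impossible unless $f^2+g^2=0$, i.e.\ unless $f=g=0$.

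For the reverse implication, assume $f=g=0$, so that $G$ is unimodular. Substituting $f=g=0$ into the reduced curvature equations (\ref{equ:curvature--1221-hom})--(\ref{equ:curvature--2313-hom}) determines the structure constants of $\mathfrak{g}$, and matching the eigenvalue signs of $L$ against Milnor's classification \cite[p.~307]{mi} identifies $G$ among the finitely many simply-connected unimodular three-dimensional Lie groups. Each of these admits a cocompact lattice $\Gamma$, so the compact quotient $\Gamma\backslash G$ is a finite volume manifold that $G$ Riemannian covers.

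The crux is the forward implication. The deck transformations are a priori only isometries of $G$, not left translations, so the right-translation flow of $W$ need not descend to the quotient and the naive volume-distortion argument does not immediately apply. The standard way around this is to observe that the deck group is a lattice in the unimodular group $\I(G)$ and to transfer unimodularity back down to $G$; this is precisely the classical input---that a Lie group admitting a discrete subgroup of finite covolume is unimodular---supplied by \cite{mi}. Granting this, the only remaining work is the routine divergence computation above and the elementary bookkeeping in the reverse direction.
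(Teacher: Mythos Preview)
Your strategy---reduce the forward direction to unimodularity via a divergence computation, and handle the reverse direction by citing that three-dimensional unimodular Lie groups admit lattices---is exactly the paper's, and your reverse direction is fine. The gap is where you say it is: the deck group $\Gamma$ lies in $\I^+(G)$, not in $G$, so the flow of $W$ need not descend. But your proposed fix does not close this gap. The classical fact you cite (a Lie group with a lattice is unimodular) applies to $\I^+(G)$ and yields only that $\I^+(G)$ is unimodular; the step ``transfer unimodularity back down to $G$'' is neither standard nor in \cite{mi}. A closed cocompact subgroup of a unimodular Lie group need not be unimodular in general, and nothing in your argument rules out a one-dimensional stabilizer $F\subset\I^+(G)$, in which case $\Gamma\cap G$ may fail to be a lattice in $G$.

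The paper closes the gap by exploiting the very hypothesis you are trying to contradict. Assume one of $f,g$ is nonzero. Any $I\in\I^+(G)$ fixing the identity has $dI(e_3)=\pm e_3$ (the Ricci eigenline is preserved); if moreover $dI(e_3)=e_3$, then comparing $dI([\bar{e}_1,\bar{e}_2])$ with $[dI(\bar{e}_1),dI(\bar{e}_2)]$ shows the rotation $dI|_{e_3^\perp}$ must fix the nonzero vector $\binom{g}{f}$, forcing $dI=\mathrm{id}$ and hence $I=\mathrm{id}$. Thus the point stabilizer $F$ injects into $\{\pm 1\}$, so $G$ has index at most two in $\I^+(G)$. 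Now $\hat{\Gamma}=\Gamma\cap G$ has index at most two in $\Gamma$, still acts with finite covolume, and consists of left translations; the left-invariant fields $\bar{e}_1,\bar{e}_2$ therefore descend to $\hat{\Gamma}\backslash G$ with constant divergences $f,g$, and your finite-volume contradiction applies. This finiteness of the stabilizer---obtained from the bracket relation under the assumption $f$ or $g$ nonzero---is the idea your proposal is missing.
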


\begin{proof}
First assume that $f=g=0$ or equivalently that $G$ is unimodular.  As all three-dimensional unimodular Lie groups admit lattices \cite{mi}, $G$ covers a finite volume manifold. 

 Next assume that $G$ covers a finite volume manifold $N$.  Then $\Gamma= \pi_1(N)$ acts by isometries on $G$.  After possibly passing to an index two subgroup, we may assume that $\Gamma$ acts by orientation preserving isometries of $G$ with finite volume quotient.  We assume that one of $f$ or $g$ is non-zero and derive a contradiction in what follows.
 
 Let $\I^{+}(G)$ denote the group of orientation preserving isometries of $G$ and $F<\I^{+}(G)$ the subgroup consisting of orientation preserving isometries fixing the identity element $e\in G$.  We first claim that $F$ is trivial or isomorphic to $\Z_2$.  To see this, note that since $G$ has constant vector curvature and is not a space form, the derivative map of each $I \in F$ fixes the line in $\mathfrak{g}\cong T_e G$ spanned by $e_3$.  Define the homomorphism $\phi:F \rightarrow \{-1,1\}$ by $dI(e_3)=\phi(I)(e_3)$.  The claim will follow once we prove $\phi$ is injective. 
 
 To see that $\phi$ is injective, assume that $I$ is an orientation preserving isometry of $G$ fixing the identity $e$ and the vector $e_3\in \mathfrak{g}\cong T_e G$.  As $I$ preserves orientation, there is a $\theta \in [0,2\pi)$ such that $$dI(\bar{e}_1)=\cos\theta\,\bar{e}_1+\sin \theta \,\bar{e}_2$$ $$dI(\bar{e}_2)=-\sin \theta \, \bar{e}_1 +\cos \theta\, \bar{e}_2.$$  Using the fact that $dI([\bar{e}_1,\bar{e}_2])=[dI(\bar{e}_1),dI(\bar{e}_2)]$, we have that 
 
$$  \begin{pmatrix} g \\
f \end{pmatrix}  \\  =  \begin{pmatrix} \cos \theta & \sin \theta \\
-\sin \theta & \cos \theta \end{pmatrix}   \\   \begin{pmatrix} g \\
f \end{pmatrix}  \\$$

As one of $g$ or $f$ is assumed to be non-zero, we have that $dI$ is the identity map of $\mathfrak{g}$.  An isometry is determined by where it sends a point and its derivative at this point.  Consequently, $I$ is the identity map of $G$, concluding the proof that $\phi$ is injective.  

Left-translation $L:G \rightarrow \I^{+}(G)$, $g \mapsto L_g$, identifies $G$ with a subgroup of  $\I^{+}(G)$.  Define the map $$\nu:\I^{+}(G) \rightarrow \{-1,1\}$$ by $\nu(I)=\phi(L^{-1}_{I(e)}\circ I)$ for each $I\in \I^{+}(G)$.  Alternatively, the derivative of each isometry $I \in \I^{+}(G)$ maps the left-invariant vector-field $e_3$ to the left-invariant vector-field $\nu(I) e_3$.  It follows easily that $\nu$ is a homomorphism and that $G=\ker (\nu)$.  The group $\hat{\Gamma}=G\cap \Gamma$ has index at most two in $\Gamma$, hence also acts on $G$ with a finite volume quotient $\hat{N}$.  As all elements in $\hat{\Gamma}$ act by left-translations, the left-invariant vector fields $\bar{e}_1$ and $\bar{e}_2$ on $G$ descend to vector fields $\hat{e}_1$ and $\hat{e}_2$ on $\hat N$.  A simple calculation shows that $f\equiv\di{\bar{e}_1}$ and $g \equiv \di{\bar{e}_2}$ and consequently, $f\equiv \di{\hat{e}_1}$ and $g\equiv \di {\hat{e}_2}$.  As $\hat{N}$ has finite volume, the divergence of every vector field on $\hat{N}$ vanishes somewhere, contradicting the assumption that one of $f$ or $g$ is non-zero.
 \end{proof}

We begin with the proof of the following:

\begin{thm}\label{cvc1-hom}
Assume that $M$ is a connected,  simply-connected, complete and homogeneous $\cvc(1)$ three-dimensional manifold with extremal curvature $+1$.
Then either

\begin{enumerate}

\item $M$ is isometric to a left-invariant metric on $\SU(2)$, or

\item $M$ is isometric to a left-invariant metric on the Heisenberg group, or

\item $M$ is isometric to a left-invariant metric on $\widetilde{\SL(2,\R)}$, or

\item $M$ is isometric to a left-invariant metric on a non-unimodular solvable three-dimensional Lie group.  
\end{enumerate}

The manifolds appearing in $(1)-(3)$ Riemannian cover finite-volume manifolds while those in $(4)$ do not.  
\end{thm}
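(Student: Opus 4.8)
The plan is to combine the normal form for the left-invariant endomorphism $A$ derived above with the reduced curvature equations (\ref{equ:curvature--1221-hom})--(\ref{equ:curvature--2313-hom}) and Milnor's classification of three-dimensional Lie algebras by the signs of the eigenvalues of the self-adjointness obstruction $L$.

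I first record the constraints already in force. Because $\lambda - \epsilon$ is a nonzero constant, Theorem \ref{thm:basic-ode} gives $\tr A = 0$ and $\det A = 1$, and Lemmas \ref{type2frame} and \ref{type1frame} put $A$ into the normal form displayed above, with $b \geq 0$. Setting $\epsilon = 1$ in (\ref{equ:curvature--1331-hom}) yields $b^2 - \sigma^2 = 1$, hence $b^2 = 1 + \sigma^2$; in particular $b \geq 1 > 0$ and, crucially, $b > \sigma$ whenever $\sigma > 0$. These inequalities are precisely what will determine the eigenvalue signs of $L$.

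I then split into cases according to $\sigma$, reading off which Christoffel symbols vanish from the reduced equations. If $\sigma > 0$, then (\ref{equ:curvature--1213-hom}), (\ref{equ:curvature--1223-hom}) and (\ref{equ:curvature--1323-hom}) force $f = g = c = 0$, so $L$ is symmetric and $G$ is unimodular; its eigenvalues are $-b \pm \sigma$ and $-2b$, all strictly negative since $b > \sigma > 0$, so by \cite{mi} $G$ is $\SU(2)$. If $\sigma = 0$, then $b = 1$ and (\ref{equ:curvature--1312-hom})--(\ref{equ:curvature--2312-hom}) reduce to $f(c-1) = g(c-1) = 0$, so either $f = g = 0$ or $c = 1$. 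In the unimodular sub-case $f = g = 0$ the matrix $L$ is diagonal with eigenvalues $c-1$, $c-1$, $-2$, and Milnor's table \cite{mi} identifies $G$ as $\widetilde{\SL(2,\R)}$ when $c > 1$ (signs $+,+,-$), as the Heisenberg group when $c = 1$ (signs $0,0,-$), and as $\SU(2)$ when $c < 1$ (signs $-,-,-$). This produces conclusions $(1)$--$(3)$.

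The only remaining possibility is $\sigma = 0$, $b = 1$, $c = 1$ with $(f,g) \neq (0,0)$. Here $L$ fails to be symmetric, so $G$ is non-unimodular; as every non-unimodular three-dimensional Lie algebra is solvable (the functional $x \mapsto \tr(\operatorname{ad} x)$ is a nonzero linear form whose kernel is a two-dimensional, hence solvable, ideal containing $[\mathfrak{g},\mathfrak{g}]$, with abelian quotient), this is conclusion $(4)$. The final assertion is then immediate from Lemma \ref{finvolquot}: conclusions $(1)$--$(3)$ are exactly the unimodular cases $f = g = 0$, which Riemannian cover finite-volume manifolds, whereas case $(4)$ has $(f,g) \neq (0,0)$ and so does not. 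I expect the only delicate point to be the bookkeeping in matching each sign pattern to the correct Lie group in Milnor's list, read up to a global sign reversal of $L$ coming from an orientation change; the strict inequality $b > \sigma$ furnished by $b^2 - \sigma^2 = 1$ is exactly what keeps all three eigenvalues negative in the $\sigma > 0$ case and thereby rules out the borderline patterns yielding $\widetilde{E(2)}$ or $\widetilde{\SL(2,\R)}$.
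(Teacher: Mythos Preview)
Your proof is correct and follows essentially the same route as the paper: reduce to the Lie-group case with the normal form for $A$, feed the reduced curvature equations into the $\sigma>0$ / $\sigma=0$ dichotomy, and read off the Lie group from the sign pattern of the eigenvalues of $L$ via Milnor. Your Type~I case is marginally slicker than the paper's---you use (\ref{equ:curvature--1213-hom}), (\ref{equ:curvature--1223-hom}), (\ref{equ:curvature--1323-hom}) to kill $f,g,c$ in one stroke and compute the eigenvalues $-b\pm\sigma$, $-2b$ of $L$ directly in the given frame, whereas the paper first rotates by $\pi/4$ to diagonalize $L$---but the substance is the same.
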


\begin{proof}
As the round three dimensional sphere is isometric to a left-invariant metric on $\SU(2)$, we may assume that $M$ is isometric to a connected and simply-connected three dimensional Lie group $G$ endowed with a left-invariant metric with no isotropic points (or equivalently $1\neq \lambda \in \R$).  We assume that $G$ is endowed with a left-invariant framing $\{\bar{e}_1,\bar{e}_2,e_3\}$ as described above. By (\ref{equ:curvature--1331-hom}) $b^2=1+\sigma^2$ so that $b \geq 1$ by our convention that $b\geq 0$.

In what follows, we consider two separate types of left-invariant metrics. \\\\

\textbf{Type I, $G=\mathcal{P}_1$:}  

Metrics of this type have $\sigma>0$.  Therefore $c=0$ by (\ref{equ:curvature--1323-hom}) and $$
 \begin{pmatrix} 0 \\
0 \end{pmatrix}  \\  =  \begin{pmatrix} \sigma & b \\
-b & -\sigma \end{pmatrix}   \\   \begin{pmatrix} g \\
-f \end{pmatrix}  \\
$$  by (\ref{equ:curvature--1312-hom}) and (\ref{equ:curvature--2312-hom}).  As $b^2-\sigma^2 =1$, $f=g=0$.  The curvature equations (\ref{equ:curvature--1221-hom})-(\ref{equ:curvature--2313-hom}) are all solved once values for the constants $b>1$ and $\sigma>0$ satisfying the equality $1=b^2-\sigma^2$ are specified.

Let $\alpha=\frac{\sigma}{b}$. As $1= b^2-\sigma^2=b^2(1-\alpha^2)$ and $b>1$, we have that  $\alpha$ may take any value in  $(0,1)$.  Let $\mu=b(1-\alpha)>0$.  A simple calculation shows that $\mu=(\frac{1-\alpha}{1+\alpha})^{1/2}$.  Note that $\mu$ may take any value in $(0,1)$ .  

Rotating $\bar{e}_1$ and $\bar{e}_2$ at all points by angle $\pi/4$, we obtain a new left-invariant framing $\{e_1, e_2, e_3\}$ with constant Christoffel symbols $c=f=g=a_{11}=a_{22}=0$ and $\mu=a_{12}=-\frac{1}{a_{21}}$.  The curvature $\l=-1$ by (\ref{equ:curvature--1221-hom}).  With respect to the framing $\{e_1,e_2,e_3\},$

$$
L=  \begin{pmatrix} -\frac{1}{\mu} & 0 & 0 \\
 0 & -\mu & 0 \\
0 & 0 & -(\mu+\frac{1}{\mu}) \end{pmatrix}.   \\
$$

According to \cite[p. 307]{mi}, $G$ is isomorphic to $\SU(2)$.

\bigskip

\textbf{Type II, $G=\mathcal{P}_2$ :} 

Metrics of this type have $\sigma=0$.  Thus $b^2=b^2-\sigma^2=1$ so that $b=1$ by our convention $b \geq 0$.  We consider two separate cases in what follows:
\medskip

\textbf{Case 1, $c\neq 1$:}

  By equations (\ref{equ:curvature--1312-hom}) and (\ref{equ:curvature--2312-hom}) we have that $f(c-1)=g(c-1)=0$ whence $f=g=0$ and $\lambda=-(2c+1)$.

With respect to the framing $\{\bar{e}_1,\bar{e}_2,e_3\}$, 

$$
L=  \begin{pmatrix} c-1 & 0 & 0 \\
 0 & c-1 & 0 \\
0 & 0 & -2 \end{pmatrix}   \\
$$ giving the bracket relations mentioned in the introduction.

According to \cite[p. 307]{mi}, if $c>1$ then $G$ is isomorphic to $\widetilde{\SL(2, \R)}$ and if $c<1$ then $G$ is isomorphic to $\SU(2)$.  

The interested reader may check that when $c=\frac{3}{2}$, $G$ is isometric to the universal covering of the unit-tangent bundle of the hyperbolic plane and that when $c=-1$, $G$ is isometric to the constant curvature one three sphere.  Moreover, the family of metrics with $c<1$ are isometric to the Berger spheres suitably rescaled so that the Hopf vector field is contained in curvature one planes.  The family of metrics with $c>1$ are constructed in a similar fashion starting with the universal covering of the unit-tangent bundle of the hyperbolic plane. \\\\
\medskip
\textbf{Case 2, $c=1$:}

As in the previous case, we have $b=1$.  The curvature equations $(\ref{equ:curvature--1221-hom})-(\ref{equ:curvature--2313-hom})$ are then satisfied for any values of $f$ and $g$.  The curvature $\lambda=-(3+f^2+g^2).$  With respect to the framing $\{\bar{e}_1,\bar{e}_2,e_3\}$, 

$$
L=  \begin{pmatrix} 0 & 0 & g \\
 0 & 0 & -f \\
0 & 0 & -2 \end{pmatrix}   \\
$$

The group $G$ is unimodular if and only if $f=g=0$ in which case $G$ is isomorphic to the three-dimensional Heisenberg group.  If one of $f$ or $g$ is non-zero, then $G$ is a non-unimodular three-dimensional Lie group, hence solvable \cite{mi}.  In this case, Lemma \ref{finvolquot} implies that $G$ does not cover a finite volume manifold. 
\end{proof}

\begin{rem}
\label{finer-classification1}
In Theorem \ref{cvc1-hom} the parameter $\mu \in (0,1)$ parameterizes the \textit{isometry classes} of Type I metrics and the parameter $c \in \R$ parameterizes the \textit{isometry classes} of Type II metrics as we now outline:
\medskip

For a Type II metric with parameter $c \in \R$ the tangent plane $e_3^{\perp}$ has curvature $-(2c+1)$ implying the second statement.  As for the first statement, let $\mu \in (0,1)$ and consider the metric Lie algebra $(\mathfrak{g}, <,>_{\mu})$ with basis $\{e_1,e_2,e_3\}$ satisfying:

$$ [e_2,e_3]=-\frac{1}{\mu} e_1, \,\,\,\,\,\,[e_3,e_1]=-\mu e_2, \,\,\,\,\,\,[e_1,e_2]=-(\frac{1}{\mu}+\mu)e_3.$$  Let $(G,<,>_{\mu})$ denote the induced connected and simply-connected Lie group with left-invariant metric.  As explained in the proof of Theorem \ref{cvc1-hom}, $G$ is isomorphic to $\SU(2)$.

Now suppose that $0<\mu_1<1$ and $0<\mu_2<1$.  For $i=1,2$, let 
$\{e^i_1,e^i_2,e^i_3\}$ denote the left-invariant orthonormal framing of $(SU(2), <,>_{\mu_i})$ just described.  Assume that $$I:(\SU(2),<,>_{\mu_1}) \rightarrow (\SU(2),<,>_{\mu_2})$$ is an isometry.  By following $I$ with a left-translation, we may assume that $I$ fixes the identity element.  The derivative map at the identity $dI: (\mathfrak{g},<,>_{\mu_1}) \rightarrow (\mathfrak{g},<,>_{\mu_2})$ is a linear isometry of metric Lie algebras.  It follows that 
 
 \begin{enumerate}
 \item $dI([e^1_1,e^1_2])=[dI(e^1_1),dI(e^1_2)],$  
 \item $dI(e_3^1)=\pm e_3^2,$ 
 \item  the image under $dI$ of the subframing $\{e_1^1,e_2^1\}$ of $(e_3^1)^{\perp}$ differs from the subframing $\{e_1^2,e_2^2\}$ of $(e_3^2)^{\perp}$ by an element of $O(2)$.
\end{enumerate}

A simple calculation using (1)-(3) implies $\mu_1=\mu_2$.  

It is interesting to note that as $\mu \rightarrow 1$, the metric Lie algebras $(\mathfrak{g},<,>_{\mu})$ converge to the Type II metric Lie algebra corresponding to the parameter $c=0$.    
\end{rem}

\begin{thm}\label{cvc-1-hom}
Assume that $M$ is a connected, simply-connected, complete and homogeneous $\cvc(-1)$ three-dimensional manifold with extremal curvature $-1$.
Then either 

\begin{enumerate}

\item $M$ is isometric to three dimensional hyperbolic space, or 

\item $M$ is isometric to a left-invariant metric on $E(1,1)$, or

\item $M$ is isometric to a left-invariant metric on $\widetilde{\SL(2,\R)}$.  

\end{enumerate}

The manifolds appearing in $(1)-(3)$ all Riemannian cover finite volume manifolds and the sectional curvatures of manifolds appearing in $(2)-(3)$ have range $[-1,1]$.  

\end{thm}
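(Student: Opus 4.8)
The plan is to follow the same route as the proof of Theorem \ref{cvc1-hom}, taking advantage of the fact that the algebra for $\e=-1$ is strictly more rigid. First I would dispose of the degenerate case: if $M$ has an isotropic point, then homogeneity forces every point to be isotropic, so $M$ is a space form of curvature $-1$, i.e.\ hyperbolic three-space, which is case (1). By the section preamble (Sekigawa's classification, together with the observation that the products $\Sigma \times \R$ carry $\cvc(0)$ rather than $\cvc(-1)$), any remaining example is a connected, simply-connected Lie group $G$ with a left-invariant metric and no isotropic points. I would then adopt the left-invariant adapted framing $\{\bar e_1,\bar e_2, e_3\}$ set up in the preamble, for which $\tr A = 0$, $\det A = -1$, and (via Lemmas \ref{type2frame} and \ref{type1frame}) $A = \begin{pmatrix} \sigma & b \\ -b & -\sigma\end{pmatrix}$ with $b \geq 0$.

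The first genuine step, and the one that makes this case rigid, is the identity (\ref{equ:curvature--1331-hom}), which now reads $b^2 - \sigma^2 = -1$, forcing $\sigma^2 = 1 + b^2 \geq 1$ and hence $\sigma \geq 1 > 0$. Thus, in contrast with Theorem \ref{cvc1-hom}, the group is \emph{always} of Type I ($G = \mathcal{P}_1$) and there is no Type II branch to analyze. Since $\sigma > 0$, equation (\ref{equ:curvature--1323-hom}) gives $c = 0$, and then (\ref{equ:curvature--1312-hom})--(\ref{equ:curvature--2312-hom}) form a linear system in $(g,f)$ whose coefficient matrix has determinant $\sigma^2 - b^2 = 1 \neq 0$, so $f = g = 0$.

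With $c = f = g = 0$ the Milnor operator $L$ of \cite[p.~307]{mi} is symmetric (so $G$ is unimodular) with eigenvalues $\sigma - b$, $-(\sigma + b)$, and $-2b$. Because $\sigma = \sqrt{1+b^2} > b$, the first is positive and the second negative. When $b = 0$ the third eigenvalue vanishes, giving signs $(+,-,0)$, so $G$ is isomorphic to $E(1,1)$; when $b > 0$ the third eigenvalue is negative, giving signs $(+,-,-)$, which is $\widetilde{\SL(2,\R)}$. This yields cases (2) and (3). For the curvature range, equation (\ref{equ:curvature--1221-hom}) with $c = f = g = 0$ forces $\lambda = 1$; then Lemma \ref{quadratic} with $\lambda_{13} = \lambda_{23} = -1$ and $\lambda_{12} = 1$ shows that a plane with unit normal $\sum_i c_i e_i$ has sectional curvature $-1 + 2c_3^2$, which sweeps out exactly $[-1,1]$ as $c_3^2$ runs over $[0,1]$. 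Finally, $f = g = 0$ means $G$ is unimodular, so by Lemma \ref{finvolquot} (equivalently, since unimodular three-dimensional Lie groups admit lattices) each group in (2)--(3) covers a finite-volume manifold, as does hyperbolic space in (1).

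The main obstacle is not computational---the structure theory of Section \ref{section:notation} does nearly all the work---but rather the correct bookkeeping in the Milnor classification: one must verify that the pattern $(+,-,-)$ arising for $b>0$ is genuinely the $\widetilde{\SL(2,\R)}$ pattern, using that reversing a single orthonormal frame vector flips all three structure-constant signs simultaneously, so that $(+,-,-)$ and $(+,+,-)$ coincide; and that the boundary value $b=0$ produces $E(1,1)$ (pattern $(+,-,0)$) rather than $\widetilde{E}(2)$ (pattern $(+,+,0)$). A secondary point needing care is confirming that \emph{every} value in $[-1,1]$ is actually attained, which follows because $c_3$ may be prescribed arbitrarily in $[-1,1]$.
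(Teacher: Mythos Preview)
Your proof is correct and follows essentially the same route as the paper's. The only cosmetic difference is that, after establishing $c=f=g=0$, the paper rotates the subframe $\{\bar e_1,\bar e_2\}$ by $\pi/4$ to diagonalize $L$ (introducing the parameter $\mu=b-\sigma$ and arranging $\mu\geq 1$), whereas you compute the eigenvalues $\sigma-b,\ -(\sigma+b),\ -2b$ of $L$ directly in the original frame and read off the sign pattern; both lead to the same Milnor identification, and your explicit verification of the $[-1,1]$ curvature range via Lemma~\ref{quadratic} is a nice addition that the paper leaves implicit.
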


\begin{proof}
If $M$ is not hyperbolic, then $M$ is isometric to a connected and simply-connected three dimensional Lie group $G$ endowed with a left-invariant metric with no isotropic points (or equivalently $-1 \neq \lambda \in \R$).  We assume that $G$ is endowed with a left-invariant framing $\{\bar{e}_1,\bar{e}_2,e_3\}$ as described above. By (\ref{equ:curvature--1331-hom}),  $b^2-\sigma^2=-1$. In particular, $\sigma \neq 0$ and $G=\mathcal{P}_1$. 

By (\ref{equ:curvature--1323-hom}), $2c\sigma=0$ so that $c=0$. By (\ref{equ:curvature--1312-hom}) and (\ref{equ:curvature--2312-hom}), we have:

$$
 \begin{pmatrix} 0 \\
0 \end{pmatrix}  \\  =  \begin{pmatrix} \sigma & -b \\
-b & \sigma \end{pmatrix}   \\   \begin{pmatrix} f \\
g \end{pmatrix}  \\
$$ 

As $\sigma^2-b^2=1$, $f=g=0$.  By \ref{equ:curvature--1221-hom}, $\lambda=1$. 

Rotate $\bar{e}_1$ and $\bar{e}_2$ at all points by angle $\pi/4$.  By (\ref{equ:transform-A}) and (\ref{equ:transform-f-g-c}) we obtain a new left-invariant framing $\{e_1, e_2, e_3\}$ with constant Christoffel symbols $c=f=g=a_{11}=a_{22}=0$ and $a_{12}=\frac{1}{a_{21}}=b-\sigma\neq 0$.  Let $\mu=b-\sigma$.  Replacing $e_3$ with $-e_3$ if necessary, we may assume that $\mu>0$ and  by possibly switching $e_1$ and $e_2$ if necessary, we may assume that $\mu\geq 1$. With respect to the orthonormal framing $\{e_1,e_2,e_3\},$

$$
L=  \begin{pmatrix} \frac{1}{\mu} & 0 & 0 \\
 0 & -\mu & 0 \\
0 & 0 & \frac{1}{\mu}-\mu \end{pmatrix}   \\
$$

According to \cite[p. 307]{mi}, if $\mu=1$, then $G$ is isomorphic to $E(1,1)$ and if $\mu>1$, then $G$ is isomorphic to the universal covering group of $\SL(2,\R)$.  

As for the last claim of the theorem, it is well known that the three dimensional hyperbolic space covers finite volume manifolds while Lemma \ref{finvolquot} implies that manifolds appearing in (2) and (3) do as well.
\end{proof}

\medskip

\begin{rem}
\label{finer-classification-1}
An argument analogous to the one outlined in remark \ref{finer-classification1} classifies the \textit{isometry classes} of metrics appearing in Theorem \ref{cvc-1-hom}.  The interested reader may check that there is a unique isometry class of metrics appearing in (2) corresponding to the parameter $\mu=1$ and that the isometry classes of metrics appearing in (3) are parameterized by $\mu \in (1, \infty)$.  
\end{rem}

\begin{thm}\label{useinrigidity}
Assume that $M$ is a connected, simply-connected, complete and homogeneous three-manifold with extremal curvature $-1$.  If $M$ has positive hyperbolic rank, then $M$ is isometric to three dimensional hyperbolic space.
\end{thm}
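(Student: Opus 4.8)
The plan is to combine the homogeneous classification of Theorem \ref{cvc-1-hom} with the extra information carried by the rank hypothesis. The first step is to note that positive hyperbolic rank forces $M$ to satisfy $\cvc(-1)$: given a unit vector $v\in T_pM$, let $\g$ be the complete geodesic with $\g(0)=p$ and $\dot\g(0)=v$; by hypothesis $\g$ carries an orthogonal Jacobi field $J$ with $\sec(\dot\g,J)(t)\equiv -1$, and since this sectional curvature is defined for every $t$ the field $J$ is nowhere zero, so $\{v,J(0)\}$ spans a plane of curvature $-1$ through $v$. As $v$ was arbitrary, $M$ has $\cvc(-1)$; combined with the standing hypotheses (extremal curvature $-1$, homogeneous, complete, simply-connected), Theorem \ref{cvc-1-hom} then gives that $M$ is hyperbolic $3$-space, or a left-invariant metric on $E(1,1)$, or a left-invariant metric on $\widetilde{\SL(2,\R)}$. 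It therefore suffices to show that the latter two metrics fail to have positive hyperbolic rank.

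For this I would work in the left-invariant adapted frame $\{e_1,e_2,e_3\}$ constructed in the proof of Theorem \ref{cvc-1-hom}, in which $c=f=g=a_{11}=a_{22}=0$, $a_{12}=\mu$, $a_{21}=1/\mu$ with $\mu\geq 1$, the plane $e_3^{\perp}$ has curvature $\l=1$, and every plane containing $e_3$ has curvature $-1$. Since $\n_{e_1}e_1=0$, the integral curve $\g$ of the left-invariant field $e_1$ is a complete unit-speed geodesic with $\dot\g(t)=e_1$. By Lemma \ref{quadratic} the plane spanned by $e_1$ and $\cos\phi\,e_2+\sin\phi\,e_3$ has sectional curvature $\cos 2\phi$, so the \emph{only} curvature $-1$ plane containing $\dot\g$ is $\mathrm{span}\{e_1,e_3\}$. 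Consequently any orthogonal Jacobi field $J$ along $\g$ with $\sec(\dot\g,J)\equiv -1$ must be pinned to the $e_3$ direction, i.e.\ $J(t)=h(t)\,e_3$ for some scalar function $h$.

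Finally I would insert $J=h\,e_3$ into the Jacobi equation $\tfrac{D^2J}{dt^2}+R(J,\dot\g)\dot\g=0$. Using $\n_{e_1}e_3=\mu e_2$ and $\n_{e_1}e_2=-\mu e_3$ one finds
$$
\frac{D^2 J}{dt^2}=(h''-\mu^2 h)\,e_3+2\mu h'\,e_2,
$$
while $R(e_3,e_1)e_1=-e_3$ gives $R(J,\dot\g)\dot\g=-h\,e_3$. Separating the $e_2$ and $e_3$ components of the Jacobi equation yields $2\mu h'=0$ and $h''-(\mu^2+1)h=0$; as $\mu\geq 1$ the first equation forces $h'\equiv 0$ and then the second forces $h\equiv 0$, so $J\equiv 0$. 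Hence no nowhere-vanishing orthogonal Jacobi field realizing $\sec(\dot\g,J)\equiv -1$ exists along $\g$, and neither $E(1,1)$ nor $\widetilde{\SL(2,\R)}$ has positive hyperbolic rank, leaving hyperbolic $3$-space as the only possibility. The main obstacle is precisely this last computation: the crux is that pinning the Jacobi field to the single curvature $-1$ line $\R e_3$ is incompatible with the Jacobi equation because of the nonzero shear $\n_{e_1}e_3=\mu e_2$, and confirming this incompatibility is where the content of the argument lies.
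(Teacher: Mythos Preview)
Your proof is correct and follows essentially the same route as the paper: reduce to the classification of Theorem \ref{cvc-1-hom}, take the $e_1$-geodesic in the left-invariant frame with $a_{12}=\mu$, $a_{21}=1/\mu$, observe that a rank-realizing Jacobi field must lie along $e_3$, and then derive the incompatible pair $2\mu h'=0$, $h''=(\mu^2+1)h$ from the Jacobi equation. The only cosmetic difference is that you invoke Lemma \ref{quadratic} to pin $J$ to the $e_3$-line, whereas the paper cites Theorem \ref{thm:local-character}(1) directly.
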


\begin{proof}
The assumption that $M$ has positive hyperbolic rank implies that $M$ has $\cvc(-1)$.  From the proof of Theorem \ref{cvc-1-hom}, a connected, simply-connected, complete and homogeneous $\cvc(-1)$ three-manifold with extremal curvature $-1$ is isometric to $\mathbb{H}^3$ or to a three-dimensional Lie group $G$ admitting a left-invariant orthonormal framing $\{e_1,e_2,e_3\}$ satisfying

\begin{eqnarray}
\label{rankchristoffel}
 \nonumber
\n_{e_1} e_3 =  \mu e_2  && \n_{e_2} e_3 = \frac{1}{\mu} e_1 \\[.2cm] \nonumber
\n_{e_3} e_1 = 0 && \n_{e_3} e_2 = 0 \\[.2cm]
\n_{e_2} e_1 =   -\frac{1}{\mu} e_3 && \n_{e_2} e_2 = 0 \\[.2cm] \nonumber 
\n_{e_1} e_2 =   - \mu e_3 && \n_{e_1} e_1 =  0 \\[.2cm] \nonumber
\n_{e_3} e_3 = 0\nonumber
\end{eqnarray} where $\mu\geq1$ is a constant and $e_3$ lies in the intersection of all curvature $-1$ planes.  It suffices to demonstrate that a left-invariant metric with a left-invariant orthonormal framing as above does not have positive hyperbolic rank.  

Consider the unit-speed geodesic $\gamma:\R \rightarrow G$ defined by $\gamma(0)=e$ and $\dot{\gamma}(0)=e_1\in \mathfrak{g}$.  As $\nabla_{e_1} e_1=0$, $\dot{\gamma}(t)=e_1(\gamma(t))$ for all $t \in \R$.  We show that the geodesic $\gamma$ has no orthogonal Jacobi field $J(t)$ satisfying $\sec(\dot{\gamma}(t),J(t))=-1$ for all $t \in \R$ as follows.  If there were such a Jacobi field, then by (1) of Theorem \ref{thm:local-character}, $J(t)=a(t)e_3(t)$ for some smooth $a:\R \rightarrow \R$ with isolated zeroes.  Use (\ref{rankchristoffel}) to calculate $$J''(t)=2\mu a'(t)e_2(t)+(a''(t)-\mu^2a(t))e_3(t)$$ and $$R(J,\dot{\gamma})\dot{\gamma}(t)=-a(t)e_3(t).$$\\  The Jacobi equation $J''(t)+R(J,\dot{\gamma})\dot{\gamma}(t)=0$ implies that  $2\mu a'(t)=0$ and $a''(t)=(\mu^2+1)a(t).$  As $\mu\neq 0$, the equality $2\mu a'(t)=0$ implies that $a(t)$ is constant.  The equality $a''(t)=(\mu^2+1)a(t)$ then implies that $a(t)=0$ for all $t\in \R$, a contradiction.
\end{proof}

\begin{thm}
\label{cvc0-hom}
Assume that $M$ is a connected, simply-connected, complete and homogeneous $\cvc(0)$ three-dimensional manifold with extremal curvature $0$.
Then either 

\begin{enumerate}

\item $M$ is isometric to three-dimensional Euclidean space, or

\item $M$ is isometric to a product of space forms $S^2 \times \R$ or $\H^2 \times \R$

\item $M$ is isometric to a left-invariant metric on a non-unimodular solvable three-dimensional Lie group.  
\end{enumerate}

The manifolds appearing in $(3)$ do not Riemannian cover finite-volume manifolds.

\end{thm}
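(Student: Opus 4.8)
The plan is to follow the template established in the proofs of Theorems \ref{cvc1-hom} and \ref{cvc-1-hom}, now specialized to $\e=0$. First I would dispose of the degenerate possibilities using the classification of Sekigawa \cite{se}. If $M$ has an isotropic point then homogeneity forces all points to be isotropic, so $M$ is a space form of curvature $0$, i.e. Euclidean space, which is alternative (1). The products $S^2\times\R$ and $\H^2\times\R$ are $\cvc(0)$ with extremal curvature $0$ and no isotropic points, giving alternative (2). It therefore remains to treat the case in which $M$ is a connected, simply-connected three-dimensional Lie group $G$ carrying a left-invariant metric of $\cvc(0)$, extremal curvature $0$, and no isotropic points, equipped with the left-invariant adapted framing $\{\bar e_1,\bar e_2,e_3\}$ constructed at the start of Section \ref{section:homogeneous}. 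Exactly as there, left-invariance makes the Christoffel symbols constant, and since $\l-\e=\l$ is a nonzero constant, Theorem \ref{thm:basic-ode} forces $\tr A=0$ and $\det A=\e=0$; after the usual normalizations I may assume
$$A=\begin{pmatrix}\sigma & b\\ -b & -\sigma\end{pmatrix},\qquad b\ge 0.$$

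The decisive relation is the curvature equation (\ref{equ:curvature--1331-hom}), which now reads $b^2-\sigma^2=0$; together with $b\ge 0$ this gives $b=\sigma$. In sharp contrast to the $\e=\pm1$ cases---where $b^2-\sigma^2=\pm1\neq 0$ made the linear system coming from (\ref{equ:curvature--1312-hom}) and (\ref{equ:curvature--2312-hom}) nondegenerate and forced $f=g=0$---here the relevant matrix is singular, so $f$ and $g$ need not vanish. This is precisely the source of the extra flexibility predicted in the introduction. I would then split into the two types $G=\mathcal{P}_1$ (where $\sigma>0$) and $G=\mathcal{P}_2$ (where $\sigma=0$).

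In Type I ($\sigma>0$), equation (\ref{equ:curvature--1323-hom}) gives $2c\sigma=0$, so $c=0$, and substituting $c=0$, $b=\sigma$ into (\ref{equ:curvature--1312-hom})--(\ref{equ:curvature--2312-hom}) yields $\sigma(g-f)=0$, hence $f=g$; equation (\ref{equ:curvature--1221-hom}) then gives $\l=-(f^2+g^2)=-2f^2$, so $f=g\neq 0$ because $M$ has no isotropic points. Computing the Milnor map $L$ in this framing shows it is not self-adjoint (the entries $g$ and $-f$ in its third column have no symmetric partners), so by \cite{mi} $G$ is non-unimodular, hence solvable: this is alternative (3), and Lemma \ref{quadratic} shows $\sec\le 0$. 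In Type II ($\sigma=0$) the relation $b=\sigma$ forces $b=0$, so $A=0$; equations (\ref{equ:curvature--1312-hom})--(\ref{equ:curvature--2312-hom}) reduce to $fc=gc=0$, and if $c\neq 0$ then $f=g=0$ and $\l=0$, a flat isotropic point, which is excluded. Thus $c=0$ and $A=0$, so $\n_v e_3=0$ for all $v$: the vector field $e_3$ is parallel, $M$ splits locally as a product of the flat $e_3$-line with a homogeneous surface of constant curvature $\l=-(f^2+g^2)<0$, and $M$ is $\H^2\times\R$, recovering alternative (2).

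For the final assertion I would invoke Lemma \ref{finvolquot}: the only Lie-group alternative producing genuinely new (non-product) metrics is Type I, and there $f=g\neq 0$, so $G$ does not Riemannian cover a finite volume manifold. I expect the main obstacle to be conceptual rather than computational: one must cleanly separate the product metric $\H^2\times\R$---which is itself a left-invariant metric on the non-unimodular group $(ax+b)\times\R$ and \emph{does} cover finite volume quotients such as $\Sigma_g\times S^1$---from the non-product solvable metrics of alternative (3), which do not. The correct invariant distinguishing them is whether $e_3$ is parallel: the condition $A=0$ (Type II) characterizes the product case (2), while $A\neq 0$ (Type I) characterizes case (3). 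Keeping this dichotomy straight, and verifying that Lemma \ref{finvolquot} is applied precisely to the $A\neq 0$ metrics, is the delicate point of the argument.
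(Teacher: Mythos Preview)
Your Type I analysis has a genuine gap: you invoke only (\ref{equ:curvature--1323-hom}), (\ref{equ:curvature--1312-hom}), and (\ref{equ:curvature--2312-hom}), but you omit (\ref{equ:curvature--1213-hom}) and (\ref{equ:curvature--1223-hom}), which in the homogeneous setting read $2g\sigma=0$ and $2f\sigma=0$. Since $\sigma>0$ in Type I these force $f=g=0$ outright (not merely $f=g$). With $c=0$ and $b=\sigma$ already in hand, (\ref{equ:curvature--1221-hom}) then gives $\l=-(f^2+g^2+2cb)=0$, contradicting the standing assumption of no isotropic points. So Type I is impossible, and the non-unimodular solvable groups you manufacture there do not exist. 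This is exactly how the paper disposes of the case in one line: it cites precisely the three equations (\ref{equ:curvature--1213-hom}), (\ref{equ:curvature--1223-hom}), (\ref{equ:curvature--1323-hom}) that you skip to conclude $f=g=c=0$ and $\l=0$.

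Once Type I is ruled out, your dichotomy collapses: every Lie-group case is Type II, where $\sigma=b=0$, hence $A=0$, $c=0$, and (as you correctly observe) $e_3$ is parallel, so the metric is a Riemannian product with a hyperbolic surface. The paper does not record this product identification; it simply computes the Milnor map $L$ in Type II, observes it is not self-adjoint (since one of $f,g$ is nonzero), and declares the resulting non-unimodular solvable group to be alternative (3), invoking Lemma \ref{finvolquot} for the finite-volume claim. Your attempt to distinguish (2) from (3) by the invariant ``$A=0$ versus $A\neq 0$'' therefore fails, because the $A\neq 0$ branch is empty. The assignment of cases to alternatives in your proposal is inverted relative to the paper: what the paper calls (3) is your Type II, not your (nonexistent) Type I.
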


\begin{proof}
If $M$ is not isometric to a manifold in (1) or (2), then $M$ is isometric to a connected and simply-connected three-dimensional Lie group $G$ endowed with a left-invariant metric with no isotropic points (or equivalently $0\neq \lambda \in \R$).

We assume that $G$ is endowed with a left-invariant framing $\{\bar{e}_1,\bar{e}_2,e_3\}$ as described above. By (\ref{equ:curvature--1331-hom}), we have that $b^2=\sigma^2$.  We first claim that $b=\sigma=0$ or equivalently that $G=\mathcal{P}_2$.  Indeed, if this were not the case then (\ref{equ:curvature--1213-hom}), (\ref{equ:curvature--1223-hom}), and (\ref{equ:curvature--1323-hom}) imply that $f=g=c=0$.  By (\ref{equ:curvature--1221-hom}), $\lambda=0$, a contradiction.  Hence $\sigma=0$ and $b=0$.  The curvature equations (\ref{equ:curvature--1221-hom})-(\ref{equ:curvature--2313-hom}) are then satisfied for any values of $f$ and $g$ with $\lambda=-(f^2+g^2) \neq 0$.  With respect to the framing $\{\bar{e}_1,\bar{e}_2,e_3\}$, 

$$
L=  \begin{pmatrix} 0 & 0 & g \\
 0 & 0 & -f \\
0 & 0 & 0 \end{pmatrix}   \\
$$  Therefore, $G$ is a non-unimodular three dimensional solvable Lie group which does not cover a finite volume manifold by Lemma \ref{finvolquot}.  
\end{proof}

\medskip

As mentioned in the introduction, the eight Thurston geometries have constant vector curvature.  The hyperbolic, Euclidean, and spherical geometries obviously do.  The product geometries have constant vector curvature zero.  The $Nil$ and $\widetilde{\SL(2,\R)}$ geometries are Type II $\cvc(1)$ manifolds with $c=1$ and $c=\frac{3}{2}$, respectively. The $Sol$ geometry is the $\cvc(-1)$ manifold corresponding to $\mu=1$.

We conclude this section with a criterion due to Singer \cite{si} for a Riemannian manifold to be homogeneous that is used in Section \ref{section:cvc(-1)} to prove Theorem \ref{maintheorem-case-1}.  For an integer $n\geq 0$, a Riemannian manifold $M$ satisfies condition $P(n)$ if for each $x,y \in M$, there exists a linear isometry of $T_xM$ onto $T_yM$ which maps $(\nabla^{k}R)_x$ onto $(\nabla^{k}R)_y$ for each $k=0,1,\ldots, n$ where $R$ is the Riemannian curvature tensor and $\nabla$ is the Levi-Civita connection on $M$

\begin{thm}[Singer]
\label{singer}
Assume that $M$ is a connected, simply-connected, and complete Riemannian manifold satisfying condition $P(n)$ for sufficiently large $n$.  Then $M$ is Riemannian homogeneous.
\end{thm}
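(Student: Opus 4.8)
The plan is to follow Singer's original strategy: attach to each point an algebraic invariant built from the curvature jets, use $P(n)$ to force this invariant to be uniform and to stabilize, integrate the resulting infinitesimal data into genuine local Killing fields, and finally globalize using completeness and simple connectivity. First I would set up the \emph{isotropy algebra chain}. For $p \in M$ and $k \geq 0$, let each $A \in \mathfrak{so}(T_pM)$ act as a derivation on tensors over $T_pM$ and set
$$
\mathfrak{g}_k(p) = \{\, A \in \mathfrak{so}(T_pM) : A \cdot (\nabla^j R)_p = 0 \text{ for } j = 0,1,\dots,k \,\}.
$$
These are Lie subalgebras forming a descending chain $\mathfrak{so}(T_pM) \supseteq \mathfrak{g}_0(p) \supseteq \mathfrak{g}_1(p) \supseteq \cdots$, so their dimensions are non-increasing and stabilize at some least index $k(p)$. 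Condition $P(n)$ supplies, for any $p,q \in M$, a linear isometry $F : T_pM \to T_qM$ carrying $(\nabla^j R)_p$ to $(\nabla^j R)_q$ for $j \leq n$; conjugation by $F$ maps $\mathfrak{g}_k(p)$ isomorphically onto $\mathfrak{g}_k(q)$ for $k \leq n$. Hence, once $n$ is large, each $\dim \mathfrak{g}_k(p)$ is independent of $p$ and the stabilization index is a constant $k_M$, the \emph{Singer invariant}; I would then fix $n \geq k_M + 1$.

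The heart of the argument, and the step I expect to be the main obstacle, is the integration of this infinitesimal data into local Killing fields. A Killing field $X$ near $p$ is determined by the pair $(X_p, (\nabla X)_p) \in T_pM \oplus \mathfrak{so}(T_pM)$, and conversely one attempts to build $X$ from a candidate pair $(v,A)$ by propagating along geodesics $\gamma$ from $p$ via the second-order system
$$
\nabla_{\dot\gamma}\nabla_{\dot\gamma} X + R(X,\dot\gamma)\dot\gamma = 0,
$$
keeping $\nabla_{\dot\gamma} X$ skew-symmetric. The field so produced is Killing exactly when it preserves $R$ together with all of its covariant derivatives, and differentiating these preservation constraints generates a tower of obstructions whose successive layers are measured by the quotients $\mathfrak{g}_k(p)/\mathfrak{g}_{k+1}(p)$. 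The stabilization $\mathfrak{g}_{k_M}(p) = \mathfrak{g}_{k_M+1}(p)$, guaranteed by the hypothesis $P(k_M+1)$, is precisely the condition that these obstructions terminate, so the prolonged Killing system becomes involutive and every admissible $(v,A)$ integrates to a local Killing field about $p$. Crucially, $P(n)$ makes each tangent vector $v$ realizable as the value of such a field, so the local Killing algebra acts with full-rank evaluation at $p$. Verifying that the obstruction tower closes at level $k_M$ — rather than merely at each point separately — is the genuine technical crux, and this is exactly where the uniformity of $k_M$ obtained above is used.

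It remains to pass from local Killing fields to homogeneity. The abundance of local Killing fields just produced makes the pseudogroup of local isometries act with open orbits; combined with the pointwise equivalence of curvature jets furnished by $P(n)$, this shows that every point has a neighborhood constituting a single orbit, i.e. $M$ is locally homogeneous. A locally homogeneous Riemannian manifold is real-analytic in normal coordinates, so on the complete and simply-connected $M$ each local isometry extends, by analytic continuation and the monodromy theorem, to a global isometry. Local transitivity then upgrades to genuine transitivity of $\I(M)$, and therefore $M$ is Riemannian homogeneous.
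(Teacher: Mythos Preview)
The paper does not prove this theorem: it is stated with attribution to Singer and used as a black box, with the reference \cite{si} given for the proof. So there is nothing in the paper to compare your argument against.

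That said, your outline is a faithful sketch of Singer's original proof: the descending chain of infinitesimal isotropy algebras $\mathfrak{g}_k(p)$, the use of $P(n)$ to make the stabilization index $k_M$ globally constant, the integration of admissible jets $(v,A)$ to local Killing fields once the obstruction tower closes at level $k_M$, and the passage from local to global homogeneity via analytic continuation on a complete simply-connected manifold. The one place where your sketch is thinnest is the assertion that every $v \in T_pM$ arises as the value of a local Killing field; in Singer's argument this is not obtained from the pair $(v,0)$ directly but rather by showing that any linear isometry $T_pM \to T_qM$ preserving the curvature jets through order $k_M+1$ is the differential of a local isometry, and then invoking $P(k_M+1)$ to produce such linear isometries between arbitrary points. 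Your formulation in terms of surjectivity of the evaluation map is equivalent, but the mechanism deserves a sentence.
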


\section{\bf Three manifolds with $\cvc(-1)$ and extremal curvature $-1$}\label{section:cvc(-1)}

In this section, $M$ denotes a connected and complete three-manifold with $\cvc(-1)$, extremal curvature $-1$, and finite volume. We use the notation and conventions introduced in Section \ref{section:notation}. 

\medskip

Restrict the three scalar functions $\tr A$, $\det A$ and $\lambda = R_{1221}$ on $\mathcal{P}$ to functions along an $e_3$-geodesic $\gamma$. Let $t$ be a parameter such that $e_3 = \frac{d}{dt}$.  The reader may check the following:

\begin{lem}\label{odesolution:-1}
The solution to 
$$
\ell''-4 \ell=2k, \,\,\, k=\det A(0)-1
$$ 
with initial conditions 
$$
\ell(0)=1,\,\,\,\ell'(0)=\tr A(0)
$$

is

$$
\ell(t) = \frac{1}{2} \big( \tr A(0)  \sinh 2t  + (\det A(0) + 1) \cosh 2 t -  (\det A(0) - 1) \big).
$$
\end{lem}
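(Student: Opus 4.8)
The plan is to recognize that, specializing Theorem \ref{odesolution} to $\e = -1$, the governing equation $\ell'' + 4\e \ell = 2k$ becomes the constant-coefficient linear ODE $\ell'' - 4\ell = 2k$ with $k = \det A(0) - 1$. Since this is an inhomogeneous second-order linear equation with constant coefficients, I would solve it directly from scratch rather than merely verify the closed form by differentiation, although either route suffices.

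First I would record the homogeneous solutions: the characteristic polynomial $r^2 - 4$ has roots $r = \pm 2$, so $\cosh 2t$ and $\sinh 2t$ form a fundamental system. A particular solution is the constant $\ell_p = -k/2$, since then $\ell_p'' = 0$ and $-4\ell_p = 2k$. Hence the general solution is $\ell(t) = C_1 \cosh 2t + C_2 \sinh 2t - k/2$.

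Next I would impose the initial conditions $\ell(0) = 1$ and $\ell'(0) = \tr A(0)$. Evaluating at $t = 0$ gives $C_1 - k/2 = 1$, hence $C_1 = 1 + k/2 = (\det A(0) + 1)/2$; differentiating and evaluating at $t=0$ gives $2C_2 = \tr A(0)$, hence $C_2 = \tr A(0)/2$. Substituting these constants together with $-k/2 = -(\det A(0) - 1)/2$, and factoring out $1/2$, reproduces the asserted expression for $\ell(t)$.

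Since the computation is entirely elementary, there is no genuine obstacle here; the only point requiring minor care is the bookkeeping of the constant term, namely rewriting $-k/2$ as $-(\det A(0) - 1)/2$ so that it appears as the final summand in the stated formula. Alternatively, one may simply substitute the claimed $\ell(t)$ into the ODE and check the two initial conditions, which is the routine verification the author leaves to the reader.
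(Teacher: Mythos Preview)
Your proposal is correct. The paper leaves this lemma as a routine check for the reader, and your direct solution of the constant-coefficient ODE via the fundamental system $\{\cosh 2t,\sinh 2t\}$ and the constant particular solution $-k/2$, followed by imposing the initial conditions, is exactly the elementary computation intended; there is nothing further to compare.
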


\begin{thm}\label{tr=0:-1}
The functions $\tr A \equiv 0$ and $\det A \equiv -1$ on $\mathcal{P}$.  In particular, the flow generated by $e_3$ preserves volume.
\end{thm}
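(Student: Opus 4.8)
The plan is to read $\tr A$ as the divergence of the geodesic field $e_3$ and turn the finite-volume hypothesis into an integral identity for the Jacobian of the $e_3$-flow. First I would record, from the Christoffel relations (\ref{equ:Christoffel}) and $\n_{e_3}e_3=0$ (Theorem \ref{thm:e_3-geodesics}), that
\[
\di e_3 = \langle \n_{e_1}e_3,e_1\rangle + \langle \n_{e_2}e_3,e_2\rangle + \langle \n_{e_3}e_3,e_3\rangle = a_{11}+a_{22} = \tr A.
\]
Passing, as usual, to a cover on which the line field is oriented by a global $e_3$ keeps the volume finite and loses no generality, since the conclusions $\tr A=0$ and $\det A=-1$ are independent of the choice of orientation of $e_3$. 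Let $\phi_t\colon \mathcal P \to \mathcal P$ be the flow of $e_3$, which is complete and leaf-preserving by Corollary \ref{foliation}. Its Jacobian $J(t,p)$ satisfies $\partial_t \log J = (\di e_3)\circ\phi_t = \tr A$ along the $e_3$-geodesic through $p$, and since $\tr A = \ell'/\ell$ with $\ell_p(0)=1$ by Theorem \ref{odesolution}, I conclude $J(t,p)=\ell_p(t)$, the explicit solution of Lemma \ref{odesolution:-1}.

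Next I would exploit invariance. As $\mathcal P$ is saturated with complete leaves, $\phi_t(\mathcal P)=\mathcal P$, so the change-of-variables formula gives
\[
\int_{\mathcal P} \ell_p(t)\,dV(p) = \vol(\phi_t(\mathcal P)) = \vol(\mathcal P) < \infty \quad\text{for every } t.
\]
Writing $\ell_p(t) = P(p)e^{2t} + Q(p)e^{-2t} + R(p)$ with $P = \tfrac14(\tr A + \det A + 1)$, $Q = \tfrac14(\det A + 1 - \tr A)$, the a priori positivity $\ell_p(t)>0$ for all $t$ (Theorem \ref{odesolution}) forces $P\ge 0$ and $Q\ge 0$ pointwise, by inspecting the leading terms as $t\to\pm\infty$.

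The key step is then a Fatou argument. Since $e^{-2t}\ell_p(t)\to P(p)$ pointwise with $e^{-2t}\ell_p(t)>0$,
\[
\int_{\mathcal P} P\,dV \le \liminf_{t\to\infty}\int_{\mathcal P} e^{-2t}\ell_p(t)\,dV = \liminf_{t\to\infty} e^{-2t}\vol(\mathcal P) = 0,
\]
so $P\equiv 0$; the symmetric computation with $t\to-\infty$ and $e^{2t}\ell_p(t)\to Q(p)$ gives $Q\equiv 0$. Hence $\tr A + \det A + 1 = 0$ and $\det A + 1 - \tr A = 0$ almost everywhere, and everywhere on $\mathcal P$ by smoothness; adding and subtracting yields $\det A\equiv -1$ and $\tr A\equiv 0$ (alternatively Corollary \ref{tr-det} reduces the two identities to one). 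Finally $\di e_3 = \tr A = 0$ is precisely the assertion that the $e_3$-flow preserves volume.

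The main obstacle I anticipate is the bookkeeping that makes this work cleanly: recognizing that the full flow Jacobian equals the \emph{explicit} solution $\ell_p$, so that the finite-volume bound collapses to the single identity $\int_{\mathcal P}\ell_p(t)\,dV \equiv \vol(\mathcal P)$, and then justifying the interchange of limit and integral. The latter is handled by Fatou, but the crucial point is that the sign constraints $P,Q\ge 0$ (forced by $\ell_p>0$, not by the volume estimate) upgrade the integral conclusions $\int P = \int Q = 0$ to the pointwise vanishing of the exponential coefficients; without this a priori positivity one would only control averages of $\tr A$ and $\det A$, which is insufficient.
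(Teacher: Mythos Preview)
Your argument is correct and takes a genuinely different route from the paper's. Both proofs rest on the same three ingredients---the explicit hyperbolic form of $\ell_p(t)$, the pointwise positivity $\ell_p>0$ forcing $P,Q\ge 0$, and the finite-volume hypothesis coupled with $\di e_3=\tr A$---but they assemble them differently. The paper classifies $e_3$-geodesics into four types according to the asymptotics of $\tr A(t)$ as $t\to\pm\infty$ (determined by which of $P,Q$ vanish), and then rules out each ``bad'' type by an empty-interior argument: on a saturated open set where $\tr A$ is eventually positive, the volume of a forward-saturated tube would be both nonincreasing (by nesting) and strictly increasing (by $\int\di e_3>0$), a contradiction. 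Your approach bypasses this case analysis by recognizing that the full flow Jacobian \emph{equals} $\ell_p(t)$, so change of variables gives the single identity $\int_{\mathcal P}\ell_p(t)\,dV\equiv\vol(\mathcal P)$; Fatou then kills both exponential coefficients in one stroke. Your route is cleaner and more quantitative; the paper's is more hands-on and makes the geometric mechanism (recurrence versus expansion) visible at each step.
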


\begin{proof}
Note that $\di(e_3)=\tr A$.  Hence, the second statement follows from the first.  As for the first statement, it suffices to prove $\tr A$ vanishes identically along each $e_3$-geodesic by Corollary \ref{tr-det} .  For a given $e_3$-geodesic $\g(t)$, let $c_1=\tr A(0)$, $c_2=\det A(0)+1$, and $c_3=-2k=-2(\det A(0)-1)$ so that $$\ell(t)=\frac{1}{4}((c_1 + c_2) e^{2t} - (c_1 - c_2) e^{-2t} + c_3).$$  As $\ell$ is everywhere positive on $\g$, the initial conditions satisfy $c_1+c_2\geq0$ and $c_1-c_2\leq0$.  Theorem \ref{odesolution} implies

$$
\tr A(t)=\frac{\ell'(t)}{\ell(t)} = 2\;\; \frac{ (c_1 + c_2) e^{2t} + (c_1 - c_2) e^{-2t} } { (c_1 + c_2) e^{2t} - (c_1 - c_2) e^{-2t} + c_3}.
$$

First suppose that $(c_1+c_2)=0$ and $(c_1-c_2)=0$.  Then $c_3=4$ and the above formula implies $\tr A \equiv 0$.  Otherwise, if

\begin{eqnarray}
(c_1 + c_2) > 0 \;\; &\mbox{then} \;\; \tr A \to 2 & \; \mbox{as} \; t \to \infty, \\
(c_1 - c_2) < 0 \;\; &\mbox{then} \;\;  \tr A \to -2 &\; \mbox{as} \; t \to -\infty,\\
(c_1 + c_2) = 0,  (c_1 - c_2) < 0\;\; &\mbox{then} \;\;  \tr A \to 0 & \; \mbox{as} \; t \to \infty,\\
 (c_1 - c_2) = 0, (c_1 + c_2) > 0, \;\; &\mbox{then} \; \; \tr A \to 0 &\; \mbox{as} \; t \to- \infty.
\end{eqnarray}
Accordingly, $e_3$-geodesics in $\mathcal{P}$ fall into four disjoint classes. Those on which: (i) $\tr A \to 2$ as $t \to \infty$ and $\tr A \to -2$ as $t \to -\infty$, (ii)  $\tr A \to 2$ as $t \to \infty$ and $\tr A \to 0$ as $t \to -\infty$, (iii) $\tr A \to 0$ as $t \to \infty$ and $\tr A \to -2$ as $t \to -\infty$ and (iv) $\tr A \equiv 0$. We denote the set of points in ${\mathcal P}$ that lie on $e_3$-geodesics of type (i) by ${\mathcal S}_{-2,2}$, of type (ii) by ${\mathcal S}_{0,2}$, of type (iii) by ${\mathcal S}_{-2,0}$, and of type (iv) by ${\mathcal S}_{0,0}$. Our goal is to prove that $\mathcal{P}=\mathcal{S}_{0,0}$.  Note that by continuity of $\tr A$ on $\mathcal{P}$, it suffices to prove that each of ${\mathcal S}_{-2,2}$, ${\mathcal S}_{0,2}$ and ${\mathcal S}_{-2,0}$ has empty interior.

Seeking a contradiction, first suppose that ${\mathcal S}_{0,2}$ has non-empty interior.  As $\mathcal{S}_{0,2}$ is saturated by complete $e_3$-geodesics, we may find an open set $U\subset \mathcal{S}_{0,2}$ also saturated by complete $e_3$-geodesics and in particular invariant under the flow $\phi_t$ generated by the vector field $e_3$. By definition, an $e_3$-geodesic $\g \subset {\mathcal S}_{0,2}$ has a parameterization with initial conditions satisfying $(c_1 + c_2) > 0$ and $(c_1 - c_2) = 0$.  Consequently, the derivative $\ell'$ and $\di(e_3)=\tr A$ are positive functions on $\g$ and hence also on $\mathcal{S}_{0,2}$.

As $\vol(M) < \infty$, the set $U$ is a finite volume open set so that 
 $$\frac{d}{ds} \vol(\phi_s(U))=\int_{\phi_s(U)} \di(e_{3}).$$  This is a contradiction since the left-hand side equals $0$ by flow-invariance of $U$ and the right-hand side is positive since $\di(e_3)>0$ on $U=\phi_s(U)$, a non-empty open set.  An analogous argument proves that $\mathcal{S}_{-2,0}$ has no interior points.  It remains to prove that $\mathcal{S}_{-2,2}$ has empty interior.

Seeking a contradiction, suppose that $\mathcal{S}_{-2,2}$ has non-empty interior. Then we may find a closed two-dimensional disc $D\subset \mathcal{S}_{-2,2}$ transversal to the $e_3$-geodesic foliation.  For $x \in D$, let $\gamma_x$ denote the $e_3$-geodesic with $\gamma_x(0)=x$ and let $c_1(x)$ and $c_2(x)$ denote the corresponding initial conditions for $\ell$ along $\g_x$.  For $t \geq 0$, let $$U(t)=\{ \gamma_x(s) \, \vert \, (x,s) \in D \times [t, \infty)\, \}.$$  

It is easy to check that $\tr A (\g_x(t))>0$ for all $t$ satisfying $$e^{4t}>\frac{c_2(x)-c_1(x)}{c_1(x)+c_2(x)}.$$  As the initial conditions, $c_1(x)$ and $c_2(x)$ depend continuously on $x \in D$, there is a $T>0$ such that $\tr A$ is positive on all of $U(T)$.  

For $s \geq 0$, let $v_s=\vol(U(T+s))$.  The sets $U(T+s)$ have finite and positive volume.  When $0 \leq s_1 <s_2$, $U(T+s_2)\subset U(T+s_1)$ so that $v_s$ is a finite non-increasing function.  On the other hand,
$$\frac{d}{ds}v_s=\int_{U(T+s)} \di(e_{3})$$ is positive since $\di(e_3)=\tr A>0$ on $U(T+s)$, a set with non-empty interior.  This contradiction concludes the proof.
\end{proof}

\bigskip

\begin{cor}
$\mathcal{P}=\mathcal{P}_1$
\end{cor}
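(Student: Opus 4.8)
The plan is to reduce the claim to a single algebraic identity relating $\det A$ to the normal form data $a,b,\sigma,\tau$, and then to feed in the conclusion of Theorem \ref{tr=0:-1}. Recall that $\mathcal{P}=\mathcal{P}_1 \sqcup \mathcal{P}_2$, where $\mathcal{P}_1=\{-\det A_0>0\}$ and $\mathcal{P}_2=\{\det A_0=0\}$, and that $-\det A_0=\sigma^2+\tau^2$. Thus $\mathcal{P}=\mathcal{P}_1$ is equivalent to the assertion that $\det A_0<0$ (equivalently $\sigma^2+\tau^2>0$) at every point of $\mathcal{P}$, i.e. that $\mathcal{P}_2=\emptyset$.

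The key step is to compute $\det A$ from the decomposition $A=A_0+(\tfrac{1}{2}\tr A)\mathrm{Id}+A_{sk}$. Writing $a_{11}=a+\sigma$, $a_{22}=a-\sigma$, $a_{12}=\tau+b$, $a_{21}=\tau-b$, a direct expansion gives
$$
\det A = a_{11}a_{22}-a_{12}a_{21} = (a^2-\sigma^2)-(\tau^2-b^2) = a^2+b^2-(\sigma^2+\tau^2) = a^2+b^2+\det A_0 .
$$
Now I invoke Theorem \ref{tr=0:-1}, which asserts $\tr A \equiv 0$ and $\det A \equiv -1$ on all of $\mathcal{P}$. Since $a=\tfrac{1}{2}\tr A=0$, the displayed identity collapses to $-1=b^2+\det A_0$, whence
$$
\det A_0 = -1-b^2 \leq -1 < 0
$$
at every point of $\mathcal{P}$. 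Equivalently $-\det A_0=\sigma^2+\tau^2=1+b^2>0$, so every point of $\mathcal{P}$ lies in $\mathcal{P}_1$ and $\mathcal{P}_2=\emptyset$.

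There is no genuine obstacle here: the corollary is an immediate bookkeeping consequence of Theorem \ref{tr=0:-1}. The only point requiring care is the bookkeeping in the determinant identity — matching the normal form entries $a,b,\sigma,\tau$ to the matrix entries $a_{ij}$ with the correct signs (in particular the sign convention $b=\tfrac{1}{2}(a_{12}-a_{21})$ that makes $A_{sk}$ contribute $+b^2$ rather than $-b^2$ to $\det A$). Once that identity is in hand, the vanishing of $\tr A$ and the value $\det A=-1$ force $\sigma^2+\tau^2$ to be bounded below by $1$, which is strictly stronger than the nonvanishing needed, and the conclusion follows.
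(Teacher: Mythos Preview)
Your proof is correct and takes essentially the same approach as the paper: both rely on Theorem \ref{tr=0:-1} together with the determinant identity $\det A = a^2 + b^2 - (\sigma^2+\tau^2)$. The paper phrases it as a contradiction (on $\mathcal{P}_2$ the matrix $A$ has the form of Lemma \ref{type2frame}, so $\det A = a^2+b^2 = b^2 = -1$), whereas you argue directly that $\sigma^2+\tau^2 = 1+b^2 > 0$ on all of $\mathcal{P}$; these are the same computation.
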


\begin{proof}
Suppose not and choose a point $p \in \mathcal{P}_2$.  By Lemma \ref{type2frame} we have that 
$$A = \begin{pmatrix} a & b \\
-b & a \end{pmatrix}$$ with respect to any adapted framing at the point $p$.   By Theorem \ref{tr=0:-1} we have that $a=\frac{1}{2}\tr A=0$ and $\det A =b^2=-1$, a contradiction.  
\end{proof}

By Lemma \ref{type1frame}, at each point $p \in \mathcal{P}=\mathcal{P}_1$ there are precisely two adapted framings $\{e_1,e_2,e_3\}$ and $\{-e_1,-e_2,e_3\}$ with respect to which 
\begin{equation}\label{-1Aform1}
A = \begin{pmatrix} \sigma & b \\
-b & -\sigma \end{pmatrix}
\end{equation} with $\sigma>0$.  

Let $L_1$ denote the line field on $\mathcal{P}$ spanned by $e_1$.  For a connected component $\mathcal{C}$ of $\mathcal{P}$,  we let $\bar{\mathcal{C}}$ denote a connected component of the orientation double cover of the line field $L_1$ endowed with the lifted $\cvc(-1)$ metric.  A choice of orientation for $L_1$ on $\bar{\mathcal{C}}$ yields a global adapted framing $\{e_1,e_2,e_3\}$ of $\bar{\mathcal{C}}$ for which the matrix $A$ has the form (\ref{-1Aform1}).

\begin{lem}
For a global adapted framing $\{e_1,e_2,e_3\}$ of $\bar{\mathcal{C}}$ with respect to which $A$ has form (\ref{-1Aform1}), $c\equiv 0$.  Moreover, $b$ and $\sigma$ are constant along $e_3$-geodesics.
\end{lem}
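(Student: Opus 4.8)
The plan is to use the curvature equations from Section~\ref{section:notation}, specialized to the global adapted frame on $\bar{\mathcal{C}}$ in which $A$ has the form (\ref{-1Aform1}). In this frame we have $a_{11}=\sigma$, $a_{22}=-\sigma$, $a_{12}=b$, $a_{21}=-b$, and recall from Theorem~\ref{tr=0:-1} that $\tr A\equiv 0$ and $\det A\equiv -1$, so in particular $\sigma^2-b^2=-\det A=1$ and hence $\sigma>0$ everywhere with $\sigma^2=1+b^2$.

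First I would extract $c\equiv 0$. The cleanest route is equation (\ref{equ:curvature--1323-hom})'s non-homogeneous analogue, namely the original (\ref{equ:curvature--1323}): $e_3(a_{12})+c(a_{11}-a_{22})+a_{12}\tr A=0$. Since $\tr A\equiv 0$, $a_{11}-a_{22}=2\sigma$, and $a_{12}=b$, this reads $e_3(b)+2c\sigma=0$. On the other hand, equation (\ref{equ:curvature--difference}) gives $e_3(b)+(\tr A)b=e_3(b)=0$, so $b$ is constant along $e_3$-geodesics and therefore $2c\sigma=0$; since $\sigma>0$ this forces $c\equiv 0$. Thus the $e_3$-derivative statements for $b$ come for free, and the constancy of $\sigma$ along $e_3$-geodesics follows immediately from $\sigma^2=1+b^2$ together with $\sigma>0$ once $b$ is known to be $e_3$-constant (alternatively, directly from (\ref{equ:curvature--difference2}), which with $\tr A\equiv 0$ and $c\equiv 0$ reduces to $e_3(\sigma)=0$).

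The main subtlety I anticipate is not the computation but making sure the chosen frame is genuinely the global adapted frame of (\ref{-1Aform1}) at which $\tau=0$, so that equation (\ref{equ:curvature--sum2}) is consistent: with $\tau\equiv 0$, $\tr A\equiv 0$, it reads $2c\sigma=0$, giving a second confirmation that $c\equiv 0$ once $\sigma>0$. I would present the argument in the order: (i) record the frame entries and the identities $\tr A\equiv 0$, $\det A\equiv -1$ from Theorem~\ref{tr=0:-1}; (ii) apply (\ref{equ:curvature--difference}) to get $e_3(b)=0$; (iii) apply (\ref{equ:curvature--1323}) (or (\ref{equ:curvature--sum2})) to conclude $2c\sigma=0$, hence $c\equiv 0$ using $\sigma>0$; (iv) apply (\ref{equ:curvature--difference2}) to get $e_3(\sigma)=0$, or deduce it from $\sigma^2=1+b^2$. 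The only thing to be careful about is that $b$ and $\sigma$ are asserted constant \emph{along $e_3$-geodesics} (i.e. only their $e_3$-derivatives vanish), not constant on all of $\bar{\mathcal{C}}$; their transverse derivatives $e_1(\cdot)$, $e_2(\cdot)$ are governed by the remaining equations (\ref{equ:curvature--1213})--(\ref{equ:curvature--1223}) and need not vanish, so I would phrase the conclusion precisely in those terms to match the lemma statement.
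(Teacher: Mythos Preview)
Your argument is correct and essentially the same as the paper's: both use the frame conditions $\tau\equiv 0$, $\sigma>0$ together with $\tr A\equiv 0$ from Theorem~\ref{tr=0:-1} and the curvature identities (\ref{equ:curvature--difference}), (\ref{equ:curvature--sum2}), (\ref{equ:curvature--difference2}). The paper's proof is slightly more streamlined in that it reads $2c\sigma=0$ directly off (\ref{equ:curvature--sum2}) (since $\tau\equiv 0$ already forces $e_3(\tau)=0$, no prior appeal to $e_3(b)=0$ is needed), and then invokes (\ref{equ:curvature--difference}) and (\ref{equ:curvature--difference2}) for the constancy of $b$ and $\sigma$; your detour through (\ref{equ:curvature--1323}) is harmless but unnecessary.
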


\begin{proof}
As $\tau\equiv 0$ and $\sigma>0$ on $\bar{\mathcal{C}}$, (\ref{equ:curvature--sum2}) implies $c \equiv 0$.  By Theorem \ref{tr=0:-1}, $\tr A=0$ so that by (\ref{equ:curvature--difference}) and (\ref{equ:curvature--difference2}), $b$ and $\sigma$ are constant along $e_3$-geodesics.
\end{proof}

By (\ref{equ:transform-A}) and (\ref{equ:transform-f-g-c}), rotating the framing $\{e_1,e_2\}$ of $e_3^{\perp}$ at all points in $\bar{\mathcal{C}}$ by angle $\frac{\pi}{4}$ yields a global adapted framing $\{\bar{e}_1,\bar{e}_2,e_3\}$ of $\bar{\mathcal{C}}$ with respect to which 
\begin{equation}\label{-1Aform2}
A = \begin{pmatrix} 0 & \mu \\
\frac{1}{\mu} & 0 \end{pmatrix}
\end{equation} where $\mu=\sigma-b\neq 0$ is constant along $e_3$-geodesics and $c\equiv 0$.

\begin{thm}\label{frame:-1}
For a global adapted framing $\{\bar{e}_1,\bar{e}_2,e_3\}$ of $\bar{\mathcal{C}}$ with respect to which $A$ has form (\ref{-1Aform2}), $f\equiv g \equiv  0$.
\end{thm}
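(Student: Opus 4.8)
The plan is to extract a pair of coupled first–order ODEs for $f$ and $g$ along the $e_3$-geodesics and then to obstruct the exponential growth of their solutions by a recurrence argument for the $e_3$-flow, which on $\bar{\mathcal{C}}$ is both complete and volume-preserving. First I would specialize the general curvature equations to the framing $\{\bar e_1,\bar e_2,e_3\}$ in which $c\equiv 0$ and $A$ has the form (\ref{-1Aform2}), so that $a_{11}=a_{22}=0$, $a_{12}=\mu$, and $a_{21}=1/\mu$. Substituting these values into (\ref{equ:curvature--1312}) and (\ref{equ:curvature--2312}) collapses them to
$$
e_3(g)=\mu f, \qquad e_3(f)=\frac{g}{\mu}
$$
along each $e_3$-geodesic. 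Because $\mu$ is constant along $e_3$-geodesics, I would diagonalize this system by introducing $P=f+g/\mu$ and $Q=f-g/\mu$, which then satisfy $e_3(P)=P$ and $e_3(Q)=-Q$. Writing $\phi_t$ for the flow of $e_3$, this is exactly the statement that $P\circ\phi_t=e^{t}\,P$ and $Q\circ\phi_t=e^{-t}\,Q$ as functions on $\bar{\mathcal{C}}$.

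Next I would assemble the hypotheses for a recurrence argument. By Corollary \ref{foliation} the flow $\phi_t$ is complete, and by Theorem \ref{tr=0:-1} it is volume-preserving, since $\di(e_3)=\tr A\equiv 0$. Moreover $\bar{\mathcal{C}}$, being an at most double cover of a connected component of $\mathcal{P}\subset M$, has finite volume. Thus $\phi_t$ is a complete, measure-preserving flow on a finite measure space, and the Poincar\'e recurrence theorem applies: almost every $x\in\bar{\mathcal{C}}$ is forward-recurrent, i.e.\ there is a sequence $t_n\to\infty$ with $\phi_{t_n}(x)\to x$. For such an $x$, continuity of $P$ and $Q$ yields $P(x)=\lim_n e^{t_n}P(x)$ and $Q(x)=\lim_n e^{-t_n}Q(x)$. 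Since $e^{t_n}\to\infty$, the first equality forces $P(x)=0$, and since $e^{-t_n}\to 0$, the second forces $Q(x)=0$. Hence $P$ and $Q$ vanish on a full-measure set, so $P\equiv Q\equiv 0$ by continuity, and therefore $f\equiv g\equiv 0$, as claimed.

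The reduction of the curvature equations in the first step is a routine substitution, and the diagonalization into $P$ and $Q$ is immediate once one records that $\mu$ is $e_3$-constant. I expect the only point requiring genuine care to be the recurrence step: one must confirm that $\bar{\mathcal{C}}$ really does have finite volume and, more delicately, that the $e_3$-flow, which a priori is only known to be tangent to complete geodesics sitting inside the \emph{open} set $\bar{\mathcal{C}}$, is complete as a flow \emph{on} $\bar{\mathcal{C}}$, so that Poincar\'e recurrence is legitimately available. Once these are verified, the exponential dichotomy of $P$ and $Q$ is incompatible with the boundedness forced by recurrence and the argument closes. As an alternative to recurrence one could instead try to adapt the volume-monotonicity contradiction from the proof of Theorem \ref{tr=0:-1}, using the divergence identities $\di(\bar e_1)=f$ and $\di(\bar e_2)=g$ together with the $\phi_t$-invariance of the sets $\{P>0\}$ and $\{P<0\}$; but the recurrence formulation seems cleaner here, since $f$ and $g$ themselves need not be sign-definite on flow-saturated sets.
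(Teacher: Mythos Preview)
Your proposal is correct and follows essentially the same approach as the paper: both reduce (\ref{equ:curvature--1312}) and (\ref{equ:curvature--2312}) to the system $f'=g/\mu$, $g'=\mu f$ along $e_3$-geodesics, diagonalize it into modes growing like $e^{\pm t}$, and then use that the $e_3$-flow is volume-preserving (Theorem \ref{tr=0:-1}) together with Poincar\'e recurrence on the finite-volume $\bar{\mathcal{C}}$ to rule out any nontrivial solution. The paper packages the recurrence step via a transversal disc on which $f$ is bounded, whereas you argue directly that a.e.\ point is recurrent and hence must have $P=Q=0$; these are cosmetic variants of the same argument, and your flagged concerns about completeness and finite volume of $\bar{\mathcal{C}}$ are handled by Corollary \ref{foliation} and the fact that $\bar{\mathcal{C}}$ is an at most double cover of a subset of $M$.
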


\begin{proof}
Let $\g(t)$ be an $e_3$-geodesic.  Using $a_{11} = a_{22} = c = 0$ in (\ref{equ:curvature--1312}) and (\ref{equ:curvature--2312}) we have:
$$
 \begin{pmatrix} f' \\
g' \end{pmatrix}  \\  =  \begin{pmatrix} 0 & \frac{1}{\mu} \\
\mu & 0 \end{pmatrix}   \\   \begin{pmatrix} f \\
g \end{pmatrix}  \\
$$ along $\g$.
Hence 
$$
 \begin{pmatrix} f \\
g \end{pmatrix}  \\  =  c_1 e^t \begin{pmatrix} 1 \\
\mu  \end{pmatrix} + c_2 e^{-t}  \\   \begin{pmatrix} 1 \\
-\mu \end{pmatrix}  \\.
$$ where $c_1=\frac{1}{2}(f(0)+f'(0))$ and $c_2=\frac{1}{2}(f(0)-f'(0))$.  Since $\mu$ is a non-zero constant along $\g$, $f \equiv 0$ along $\g$ if and only $g \equiv 0$ along $\g$.

Seeking a contradiction, we assume that $f$ is non-zero at a point $p \in \bar{\mathcal{C}}$.  Let $D$ denote a closed two-dimensional disc transverse to the vector field $e_3$ and passing through $p$.  By continuity of $f$, we may assume that $f$ is non-zero and bounded on $D$.  For each $x \in D$, let $\g_x$ denote the $e_3$-geodesic with $\g_x(0)=x$.  

The initial conditions $c_1(x)=\frac{1}{2}(f(x)+f'(x))$ and $c_2(x)=\frac{1}{2}(f(x)-f'(x))$ for $f$ along $\g_x$ depend continuously on $x \in D$.  As $f(p)\neq 0$, one of $c_1(p)$ or $c_2(p)$ is non-zero.  If $c_1(p)\neq 0$, then after possibly shrinking $D$, we have that $c_1(x)\neq 0$ for all $x \in D$.  It follows that for each $x \in D$, $|f| \rightarrow \infty$ exponentially along $\g_x$ as $t \rightarrow \infty$.  This contradicts the fact that $f$ is bounded on $D$ since by Theorem \ref{tr=0:-1} and Poincare recurrence, $\g_x$ returns to $D$ along a sequence of times tending to infinity for almost every $x \in D.$ 

If $c_2(p)\neq 0$, an analogous argument yields a contradiction, completing the proof.

\end{proof}

\begin{prop}\label{mu:constant}
The function $\mu$ is a  constant $\neq 0$ and $\l \equiv 1$ on $\bar{\mathcal{C}}$ .
\end{prop}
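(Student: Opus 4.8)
The plan is to read off both assertions directly from the structure equations (\ref{equ:curvature--1221})--(\ref{equ:curvature--2313}) after invoking the normalizations already in force. At this stage I may assume the global adapted framing $\{\bar e_1,\bar e_2,e_3\}$ of $\bar{\mathcal C}$ puts $A$ in the form (\ref{-1Aform2}), so that $a_{11}=a_{22}=0$, $a_{12}=\mu$, $a_{21}=1/\mu$, and $c\equiv 0$. In addition, Theorem \ref{frame:-1} supplies $f\equiv g\equiv 0$ \emph{identically} on $\bar{\mathcal C}$, and Theorem \ref{tr=0:-1} gives $\tr A\equiv 0$ and $\det A\equiv -1$. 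The nonvanishing of $\mu$ is built into its definition $\mu=\sigma-b\neq 0$, so the only content left is the global constancy of $\mu$ and the identity $\lambda\equiv 1$.

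First I would establish $\lambda\equiv 1$ from (\ref{equ:curvature--1221}). Substituting $f\equiv g\equiv 0$, $c\equiv 0$, and $\det A\equiv -1$, the terms $e_1(f)$ and $e_2(g)$ vanish because $f$ and $g$ vanish identically on $\bar{\mathcal C}$ (not merely at a point), the quadratic terms $f^2+g^2$ and the cross term $c(a_{12}-a_{21})$ drop out, and the equation collapses to $-1=-\lambda$, i.e. $\lambda\equiv 1$.

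Next I would show $\mu$ is globally constant. Since $\mu$ is already constant along $e_3$-geodesics, it suffices to prove $e_1(\mu)=e_2(\mu)=0$. Feeding $a_{11}=0$ and $f=g=0$ into (\ref{equ:curvature--1213}) annihilates every term except $-e_1(a_{21})$, giving $e_1(a_{21})=0$; since $a_{21}=1/\mu$, this forces $e_1(\mu)=0$. Symmetrically, $a_{22}=0$ together with $f=g=0$ reduces (\ref{equ:curvature--1223}) to $e_2(a_{12})=0$, that is $e_2(\mu)=0$. Combined with $e_3(\mu)=0$ this yields $d\mu=0$, and connectedness of $\bar{\mathcal C}$ then gives that $\mu$ is a single (nonzero) constant.

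I do not expect a genuine obstacle here, since the whole argument is substitution into the structure equations; the analytic difficulty was already absorbed into the Poincar\'e-recurrence argument of Theorem \ref{frame:-1}. The one point demanding care is that I must use the \emph{identical} vanishing of $f$, $g$, and $c$ over all of $\bar{\mathcal C}$ (furnished by Theorem \ref{frame:-1} and the preceding lemma), rather than merely pointwise information, so that the tangential derivatives $e_1(f)$ and $e_2(g)$ appearing in (\ref{equ:curvature--1221}) are legitimately zero. With that in hand the present statement reduces to bookkeeping.
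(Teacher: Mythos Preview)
Your proposal is correct and follows essentially the same route as the paper: substitute the normalizations $a_{11}=a_{22}=c=f=g=0$, $\det A=-1$ into (\ref{equ:curvature--1213}), (\ref{equ:curvature--1223}), and (\ref{equ:curvature--1221}) to obtain $\bar e_1(1/\mu)=\bar e_2(\mu)=0$ and $\lambda\equiv 1$, then combine with $e_3(\mu)=0$ and connectedness. The only cosmetic difference is that the paper records the constancy of $\mu$ first and the value of $\lambda$ second.
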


\begin{proof}
Let $\{\bar{e}_1,\bar{e}_2,e_3\}$ be a global adapted framing of $\bar{\mathcal{C}}$ with respect to which $A$ has form (\ref{-1Aform2}).  From (\ref{equ:curvature--1213}) and (\ref{equ:curvature--1223}) it follows that $\bar{e}_1(\frac{1}{\mu})=0$ and $\bar{e}_2(\mu) = 0$. Since $e_3(\mu)=0$ it follows that
$\mu$ is a global constant. By (\ref{equ:curvature--1221}) we have that $\l\equiv 1$ on $\bar{\mathcal{C}}$.
\end{proof}

\begin{cor}\label{empty-all}
Either ${\mathcal P} = \emptyset$ or ${\mathcal P}= M$.
\end{cor}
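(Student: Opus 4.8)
The plan is to combine Proposition \ref{mu:constant} with a connectedness argument. First I would observe that although Proposition \ref{mu:constant} is stated on the orientation double cover $\bar{\mathcal{C}}$, the function $\l$ is intrinsically defined on $M$ (the scalar curvature equals $2\l+4\e$) and the relevant covering map $\bar{\mathcal{C}}\to\mathcal{C}$ is a local isometry, so $\l$ on $\bar{\mathcal{C}}$ is the pullback of $\l$ on $\mathcal{C}$. Since $\l\equiv 1$ on $\bar{\mathcal{C}}$ and the covering map is surjective, $\l\equiv 1$ on the component $\mathcal{C}$. As $\mathcal{C}$ was an arbitrary connected component of $\mathcal{P}$, this yields $\l\equiv 1$ on all of $\mathcal{P}$.

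Next I would exploit that $\l\colon M\to\R$ is continuous (indeed smooth), while taking the value $1$ on $\mathcal{P}$ and the value $-1$ on $\mathcal{I}$. Since $\mathcal{P}=\{\l\neq -1\}$ is open, its topological boundary satisfies $\partial\mathcal{P}\subset\mathcal{I}$, so $\l=-1$ at every boundary point. On the other hand, every boundary point is a limit of points of $\mathcal{P}$, where $\l\equiv 1$, so continuity of $\l$ forces $\l=1$ there as well. These two conclusions are incompatible unless $\partial\mathcal{P}=\emptyset$.

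Finally, $\partial\mathcal{P}=\emptyset$ means that $\mathcal{P}$ is closed as well as open in $M$. Because $M$ is connected, a subset that is simultaneously open and closed is either empty or all of $M$; hence either $\mathcal{P}=\emptyset$ or $\mathcal{P}=M$, as claimed.

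I would expect no serious obstacle here, since the substantive analytic work is already contained in Proposition \ref{mu:constant} (which in turn rests on Theorem \ref{tr=0:-1} and the finite-volume hypothesis). The only point requiring a little care is the descent of the conclusion $\l\equiv 1$ from the cover $\bar{\mathcal{C}}$ down to $M$, which hinges on $\l$ being a genuine function on $M$ rather than an artifact of the chosen adapted frame or of the passage to covers.
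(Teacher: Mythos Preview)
Your proposal is correct and follows essentially the same approach as the paper: use Proposition~\ref{mu:constant} to get $\l\equiv 1$ on $\mathcal{P}$, then observe that $\mathcal{P}$ is both open and closed in the connected manifold $M$ since $\l$ takes the distinct constant values $1$ on $\mathcal{P}$ and $-1$ on $\mathcal{I}$. Your added care in descending $\l\equiv 1$ from $\bar{\mathcal{C}}$ to $\mathcal{C}$ is a point the paper leaves implicit, and your boundary argument is an equivalent way of phrasing the closedness of $\mathcal{P}$.
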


\begin{proof}
Note that Proposition \ref{mu:constant} implies $\l=1$ on all of $\mathcal{P}$.  This implies that $\mathcal{P}$ is closed since $\l=-1$ on the isotropic set $\mathcal{I}=M \setminus \mathcal{P}$.  The result follows since $\mathcal{P}$ is also open in $M$, a connected manifold.  
\end{proof}

We conclude this section with the proofs of Theorems \ref{maintheorem-case-1} and \ref{maintheorem-rr} from the introduction.

\begin{proof}[Proof of Theorem~\ref{maintheorem-case-1}]
By Theorem \ref{cvc-1-hom}, it suffices to prove that if $M$ has finite-volume, $\cvc(-1)$, extremal curvature $-1$ and is not real-hyperbolic, then the universal covering of $M$ is homogeneous.  

If $M$ is not real-hyperbolic, then $\mathcal{P} \neq \emptyset$.  By Corollary \ref{empty-all}, $M=\mathcal{P}$. Recall that on a connected component of a double cover $\bar{M}$ of $M$ there is an adapted framing $\{\bar{e}_1,\bar{e}_2,e_3\}$ of $\bar{M}$ for which the matrix $A$ has form (\ref{-1Aform2}).  With respect to this framing, the Christoffel symbols $a_{11}=a_{22}=c=0$.  By Theorem \ref{frame:-1}, the Christoffel symbols $f=g=0$ with respect to this framing.  By Proposition \ref{mu:constant} the remaining Christoffel symbols $a_{12}$ and $a_{21}$ are constant with respect to this framing.  An application of Theorem \ref{singer} proves that the universal covering of $M$ is homogeneous.
\end{proof}

\begin{proof}[Proof of Theorem \ref{maintheorem-rr}]
As $M$ has positive hyperbolic rank, $M$ has $\cvc(-1)$.  The conclusion holds by Theorems \ref{maintheorem-case-1} and \ref{useinrigidity}.
\end{proof}

\section{\bf Three manifolds with $\cvc(0)$ and extremal curvature $0$}\label{section:cvc(0)}

In this section, $M$ denotes a connected and complete three-manifold with $\cvc(0)$, extremal curvature $0$, and finite volume. We use the notation and conventions introduced in Section \ref{section:notation}. 

\medskip

Restrict the three scalar functions $\tr A$, $\det A$ and $\lambda = R_{1221}$ on $\mathcal{P}$ to functions along an $e_3$-geodesic $\gamma$. Let $t$ be a parameter such that $e_3 = \frac{d}{dt}$.  The reader may check the following:

\begin{lem}\label{odesolution:0}
The solution to 
$$
\ell''=2k, \,\,\, k=\det A(0)
$$ 
with initial conditions 
$$
\ell(0)=1,\,\,\,\ell'(0)=\tr A(0)
$$

is

$$
\ell(t) =\det A(0) t^2 + \tr A(0) t+ 1
$$
\end{lem}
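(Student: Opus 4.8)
The plan is to verify directly that the stated formula solves the given initial value problem, since Lemma \ref{odesolution:0} is the $\e=0$ specialization of the general solution in Theorem \ref{odesolution}. When $\e=0$, the second-order ODE $\ell'' + 4\e\ell = 2k$ degenerates to $\ell'' = 2k$, whose general solution is obtained by integrating twice: $\ell(t) = kt^2 + c_1 t + c_0$ for constants $c_0, c_1$. I would then impose the initial conditions to pin down the constants.

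First I would record that $k = \det A(0)$ in this case, since $k = \det A(0) + \e = \det A(0)$. Integrating $\ell'' = 2k$ once gives $\ell'(t) = 2kt + c_1$, and integrating again gives $\ell(t) = kt^2 + c_1 t + c_0$. Next I would apply $\ell(0) = 1$, which forces $c_0 = 1$, and $\ell'(0) = \tr A(0)$, which gives $c_1 = \tr A(0)$. Substituting $k = \det A(0)$ then yields exactly
$$
\ell(t) = \det A(0)\, t^2 + \tr A(0)\, t + 1,
$$
as claimed.

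Since this is entirely routine, there is no genuine obstacle: the only point requiring any care is the observation that the characteristic operator $\frac{d^2}{dt^2} + 4\e$ collapses to plain double integration when $\e = 0$, so the polynomial ansatz is exact rather than involving hyperbolic or trigonometric functions (contrast Lemma \ref{odesolution:-1}, where $\e = -1$ produces $\sinh$ and $\cosh$). For completeness I would note that Theorem \ref{odesolution} already guarantees $\ell(t) > 0$ for all $t$, and that the resulting $\tr A = \ell'/\ell$ and $\det A = (\det A(0))/\ell$ follow from the general formulas there; but the content of this particular lemma is purely the explicit integration just performed.
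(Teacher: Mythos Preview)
Your proposal is correct and matches the paper's treatment: the paper gives no proof beyond ``the reader may check the following,'' and your direct double integration with the initial conditions is exactly the verification intended.
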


\begin{thm}\label{tr=0:0}
The functions $\tr A \equiv 0$ and $\det A \equiv 0$ on $\mathcal{P}$.  In particular, the flow generated by $e_3$ preserves volume.
\end{thm}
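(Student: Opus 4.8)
The plan is to mirror the proof of Theorem \ref{tr=0:-1}, exploiting that in the $\e=0$ setting the classification of $e_3$-geodesics by the endpoint behaviour of $\tr A$ degenerates: the analogue of Lemma \ref{odesolution:-1} is Lemma \ref{odesolution:0}, whose solution $\ell$ is a polynomial rather than an exponential combination, and along every geodesic $\tr A=\ell'/\ell\to 0$ at both ends. Thus the four classes of the $\e=-1$ argument collapse to a single offending one, and the natural quantity to control is $\det A$ rather than the limits of $\tr A$.

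First I would record the consequences of Lemma \ref{odesolution:0}. Writing $k=\det A(0)$ and $m=\tr A(0)$ along an $e_3$-geodesic, we have $\ell(t)=kt^2+mt+1$, and $\ell(t)>0$ for all $t$ by Theorem \ref{odesolution}. Positivity of a real quadratic forces $k\ge 0$; moreover if $k=0$ then $\ell(t)=mt+1$ is positive for all $t$ only when $m=0$, whence $\ell\equiv 1$ and $\tr A=\ell'/\ell\equiv 0$. Since $\det A=k/\ell$ by Theorem \ref{odesolution}, we conclude that $\det A\ge 0$ on all of $\mathcal{P}$, that whether $\det A$ vanishes identically or is everywhere positive is a property of each geodesic, and that $W=\{\det A>0\}$ is an open set saturated by complete $e_3$-geodesics (using Corollary \ref{foliation}). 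Because $\di(e_3)=\tr A$, the second assertion of the theorem follows from the first, and by Corollary \ref{tr-det} it suffices to prove $W=\emptyset$: on the complement $\det A\equiv 0$, which forces $k=m=0$ and hence $\tr A\equiv 0$.

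Next I would rule out $W$ by the volume-monotonicity argument used for $\mathcal{S}_{-2,2}$ in Theorem \ref{tr=0:-1}. Seeking a contradiction, suppose $W\neq\emptyset$ and choose a compact two-dimensional disc $D\subset W$ transverse to the foliation. For $x\in D$ let $\gamma_x$ be the $e_3$-geodesic with $\gamma_x(0)=x$, and let $k(x)=\det A(x)>0$ and $m(x)=\tr A(x)$ be the corresponding initial data, which depend continuously on $x$. Along $\gamma_x$ one has $\tr A(\gamma_x(r))=(2k(x)r+m(x))/\ell_x(r)$, which is strictly positive precisely when $r>t_0(x):=-m(x)/(2k(x))$. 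As $k$ is bounded below by a positive constant and $m$ is bounded on the compact set $D$, the function $t_0$ is bounded above, so there is a $T$ with $\tr A>0$ on $U(T)=\{\gamma_x(r)\,:\,x\in D,\ r\ge T\}$. For $s\ge 0$ the sets $U(T+s)=\phi_s(U(T))$ are nested and decreasing, so $v_s=\vol(U(T+s))$ is finite and non-increasing; on the other hand the first variation of volume gives $\tfrac{d}{ds}v_s=\int_{U(T+s)}\di(e_3)=\int_{U(T+s)}\tr A>0$, since $\tr A>0$ on the open set $U(T+s)\subset U(T)$. This contradiction shows $W=\emptyset$.

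The step demanding the most care is the same one that is delicate in Theorem \ref{tr=0:-1}: justifying the first-variation formula on the region $U(T)$ and the monotonicity of $v_s$, together with the claim that the initial data $k(x),m(x)$ vary continuously so that $T$ can be chosen uniformly over $D$. I expect this bookkeeping, rather than any new geometric input, to be the principal technical burden, since conceptually the $\e=0$ case only requires isolating and eliminating the single class $W$ on which $\det A>0$.
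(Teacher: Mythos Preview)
Your proposal is correct and follows essentially the same approach as the paper's proof: both classify $e_3$-geodesics by whether $\det A$ vanishes, observe that positivity of the quadratic $\ell$ forces $\det A(0)\ge 0$ (and $\tr A(0)=0$ when $\det A(0)=0$), and then eliminate the set $W=\{\det A>0\}$ by the same nested-volume contradiction on the forward tubes $U(T+s)$. The only minor slip is calling $U(T+s)$ ``open''---since $D$ is a closed disc the set merely has nonempty interior---but this does not affect the positivity of the divergence integral or the argument.
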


\begin{proof}
Note that $\di(e_3)=\tr A$.  Hence, the second statement follows from the first.   As for the first statement, it suffices to prove $\tr A$ vanishes identically along each $e_3$-geodesic  $\g$ by Corollary \ref{tr-det} .  Theorem \ref{odesolution} implies

$$
\tr A(t)=\frac{\ell'(t)}{\ell(t)} =  \frac{2 \det A(0) t +  \tr A(0) } {  \det A(0)  t^2 + \tr A(0) t  + 1}.
$$

Corollaries \ref{foliation} and \ref{lambda-det-b} imply that if $\det A(0) = 0$ then $\det A(t) \equiv 0$.  In this case, the fact that $\ell$ is positive on $\g$ implies that $\tr A(0)=0$, and consequently that $\tr A(t) \equiv 0$.  Otherwise, we have that $\det A(0) \neq 0$ and $\det A$ never vanishes on $\g$.

Accordingly we define two classes of $e_3$-geodesics. Those on which: (i) $\det A$ never vanishes, and those on which (ii) $\det A$ and $\tr A$ identically vanish. These classes are disjoint. We denote the set of points in ${\mathcal P}$ that lie on $e_3$-geodesics of type (i) by ${\mathcal S}$ and of type (ii) by ${\mathcal T}$.  We will conclude the proof by showing $\mathcal{P}=\mathcal{T}$ or equivalently that $\mathcal{S}=\emptyset.$

Suppose $\mathcal{S} \neq \emptyset$. Then $\mathcal{S}$ is a non-empty open set saturated by $e_3$-geodesics on which $\det A$ never vanishes.  Let $D$ denote a closed disc transversal to the foliation of ${\mathcal S}$ by $e_3$-geodesics.  For $x \in D$, let $\g_x$ denote the $e_3$-geodesic  with $\g_x(0)=x$.  For $(x,t)\in D \times \R$, let $$\det A(x,t) = \det A (\g_x(t))\,\,\,\,\, \tr A(x,t)=\tr A (\g_x(t)).$$  For $t>0$, let $$U(t)=\{\gamma_x(s) \, \vert \, (x,s) \in D \times [t,\infty)\, \}.$$

We first claim that $\det A$ is a positive function on $\mathcal{S}$.  To see this, let $\g$ be an $e_3$-geodesic in $\mathcal{S}$.  As $\ell>0$ on $\g$, we have that $\det A(0)>0$.  The claim follows since $\det A$ is continuous and does not vanish on $\g$. It follows that for $x \in D$,  $\tr A(x,t) >0$ for all $t > -\frac{\tr A(x,0)} {2\det A(x,0)}$.  As the initial conditions $\det A(x,0)$ and $\tr A(x,0)$ vary continuously with $x \in D$, there exists a $T>0$ such that $\tr A$ is a positive function on $U(T)$.

For $s\geq0$, let $v_s=\vol(U(T+s)).$  As $U(T+s)$ contains an open set, $v_s>0$ and as $\vol(M) < \infty$, $v_s < \infty$.  When $0\leq s_1<s_2$, $U(T+s_2) \subset U(T+s_1)$ whence $v_s$ is a finite non-increasing function.  On the other hand, $$\frac{d}{ds} v_{s} = \int_{U(T+s)} \di(e_3)$$ and since $\di(e_3)=\tr A>0$ on $U(T+s)$, a set with interior, $v_s$ is strictly increasing.  This contradiction completes the proof.

\end{proof}

\bigskip

We first consider the subset $\mathcal{P}_1\subset \mathcal{P}$.  

By Lemma \ref{type1frame}, at each point $p \in \mathcal{P}_1$ there are precisely two adapted framings $\{e_1,e_2,e_3\}$ and $\{-e_1,-e_2,e_3\}$ with respect to which 
\begin{equation}\label{0Aform1}
A = \begin{pmatrix} \sigma & b \\
-b & -\sigma \end{pmatrix}
\end{equation} with $\sigma>0$.

Let $L_1$ denote the line field on $\mathcal{P}_1$ spanned by $e_1$.  For a connected component $\mathcal{C}$ of $\mathcal{P}_1$,  we let $\bar{\mathcal{C}}$ denote a connected component of the orientation double cover of the line field $L_1$ endowed with the lifted $\cvc(0)$ metric.  A choice of orientation for $L_1$ on $\bar{\mathcal{C}}$ yields a global adapted framing $\{e_1,e_2,e_3\}$ of $\bar{\mathcal{C}}$ for which the matrix $A$ has the form (\ref{0Aform1}).

\begin{lem}
For a global adapted framing $\{e_1,e_2,e_3\}$ of $\bar{\mathcal{C}}$ with respect to which $A$ has form (\ref{0Aform1}), $c\equiv0$.  Moreover, $b$ and $\sigma$ are constants along $e_3$-geodesics.
\end{lem}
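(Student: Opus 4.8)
The plan is to mirror the argument already used for the corresponding lemma in the $\cvc(-1)$ setting: once we feed the normal-form description of $A$ and the global vanishing of $\tr A$ into the structure equations (\ref{equ:curvature--sum2}), (\ref{equ:curvature--difference}), and (\ref{equ:curvature--difference2}) derived in Section \ref{section:notation}, all three conclusions fall out by immediate substitution. The essential inputs are that the chosen framing puts $A$ in the form (\ref{0Aform1}) on all of $\bar{\mathcal{C}}$ — so that $\tau \equiv 0$ and $\sigma > 0$ — and that $\tr A \equiv 0$ on $\mathcal{P}$ by Theorem \ref{tr=0:0}.

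First I would record the consequences of $A$ having the form (\ref{0Aform1}): reading off the normal-form decomposition gives $\sigma = \tfrac12(a_{11}-a_{22})$, $\tau = \tfrac12(a_{12}+a_{21}) = 0$, and $a = \tfrac12\tr A = 0$ at every point of $\bar{\mathcal{C}}$, with $\sigma > 0$ throughout by construction of the framing. Next I would substitute $\tau \equiv 0$ into the evolution equation (\ref{equ:curvature--sum2}), namely $e_3(\tau) + 2c\sigma + (\tr A)\tau = 0$. Since both $\tau$ and $e_3(\tau)$ vanish identically, this collapses to $2c\sigma = 0$, and because $\sigma > 0$ on $\bar{\mathcal{C}}$ we conclude $c \equiv 0$.

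For the second claim I would invoke Theorem \ref{tr=0:0}, which gives $\tr A \equiv 0$ on $\mathcal{P}$ and hence on $\bar{\mathcal{C}}$. Substituting this into (\ref{equ:curvature--difference}), $e_3(b) + (\tr A)b = 0$, yields $e_3(b) = 0$, so $b$ is constant along every $e_3$-geodesic. Finally, substituting $\tau = 0$ and $\tr A = 0$ into (\ref{equ:curvature--difference2}), $e_3(\sigma) - 2c\tau + (\tr A)\sigma = 0$, gives $e_3(\sigma) = 0$, so $\sigma$ is likewise constant along $e_3$-geodesics.

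There is no genuine obstacle in this argument: each step is a direct substitution into a structure equation already established in Section \ref{section:notation}, and the proof is the verbatim $\cvc(0)$ analogue of the $\cvc(-1)$ lemma, the only change being that the trace-vanishing input is supplied by Theorem \ref{tr=0:0} in place of its $\cvc(-1)$ counterpart. The one point deserving a moment of care is that the normal-form identities $\tau \equiv 0$, $\tr A \equiv 0$ and the positivity $\sigma > 0$ are used \emph{globally} on $\bar{\mathcal{C}}$ rather than merely pointwise, but this is exactly what the construction of the global adapted framing on $\bar{\mathcal{C}}$ guarantees, so no further justification is required.
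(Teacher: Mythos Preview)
Your proposal is correct and follows the same argument as the paper's proof: use $\tau\equiv 0$ and $\sigma>0$ in (\ref{equ:curvature--sum2}) to force $c\equiv 0$, then use $\tr A\equiv 0$ from Theorem~\ref{tr=0:0} in (\ref{equ:curvature--difference}) and (\ref{equ:curvature--difference2}) to conclude that $b$ and $\sigma$ are constant along $e_3$-geodesics. Your write-up simply spells out the substitutions in more detail than the paper's two-sentence version.
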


\begin{proof}
As $\tau=0$ and $\sigma >0$ on $\bar{\mathcal{C}}$, (\ref{equ:curvature--sum2}) implies $c \equiv 0$.  By Theorem \ref{tr=0:0}, $\tr A=0$ so that by (\ref{equ:curvature--difference}) and (\ref{equ:curvature--difference2}), $b$ and $\sigma$ are constant along $e_3$-geodesics.
\end{proof}

 By Theorem \ref{tr=0:0}, $\det A =0$ so that $\sigma^2=b^2$ on $\bar{\mathcal{C}}$.  As $\sigma$ does not vanish on the connected set $\bar{\mathcal{C}}$, it follows that either $\sigma=b$ on all of $\bar{\mathcal{C}}$ or $\sigma=-b$ on all of $\bar{\mathcal{C}}$.  By (\ref{equ:transform-f-g-c}) and (\ref{equ:transformA_0}), rotating the framing $\{e_1,e_2\}$ of $e_3^{\perp}$ at all points in $\bar{\mathcal{C}}$ by angle $\pm \frac{\pi}{4}$ (depending on whether $\sigma=\pm b$, yields a new global adapted framing $\{\bar{e}_1,\bar{e}_2,e_3\}$ of $\bar{\mathcal{C}}$ with respect to which 
\begin{equation}\label{0Aform2}
A = \begin{pmatrix} 0 & 0 \\
\mu & 0 \end{pmatrix}
\end{equation}  where $\mu=-(b+\sigma)$ if $\sigma=b$, $\mu=\sigma-b$ if $\sigma=-b$, and $c \equiv 0$ on $\bar{\mathcal{C}}$.  In particular, $\mu$ is non-zero and constant along $e_3$-geodesics in $\bar{\mathcal{C}}$.

\begin{lem}\label{rho-constant}
For an adapted framing $\{\bar{e}_1, \bar{e}_2, e_3\}$ of $\bar{\mathcal{C}}$ with respect to which the matrix $A$ has form \ref{0Aform2}, the function $\mu$ is a  constant $\neq 0$ on $\bar{\mathcal{C}}$.
\end{lem}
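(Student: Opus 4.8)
The plan is to prove $\mu$ is constant by killing each of its three directional derivatives along the frame $\{\bar e_1,\bar e_2,e_3\}$; since $e_3(\mu)=0$ is already in hand, the lemma reduces to the two transverse derivatives. First I would read off the remaining curvature equations in this frame, where $a_{11}=a_{12}=a_{22}=0$, $a_{21}=\mu$, and $c\equiv 0$. Equation (\ref{equ:curvature--1223}) collapses to $g\mu=0$, so $g\equiv 0$; equation (\ref{equ:curvature--2312}) then gives $e_3(f)=0$; and equation (\ref{equ:curvature--1213}) gives the key identity $\bar e_1(\mu)=-f\mu$. Thus $\bar e_1(\mu)=0$ is equivalent to $f\equiv 0$, and the whole lemma comes down to proving $f\equiv 0$ together with $\bar e_2(\mu)\equiv 0$.

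To prove $f\equiv 0$ I would mimic the recurrence argument of Theorem \ref{frame:-1}. Reading the connection off (\ref{equ:Christoffel}) gives the bracket $[\bar e_2,e_3]=\mu\,\bar e_1$; applying this vector field to $\mu$ and using $\bar e_1(\mu)=-f\mu$ yields $e_3(\bar e_2(\mu))=f\mu^2$. Since $e_3(f)=e_3(\mu)=0$, the right-hand side is constant along each $e_3$-geodesic, so $\bar e_2(\mu)$ is an affine function of the $e_3$-parameter with slope $f\mu^2$. The $e_3$-flow is complete (Corollary \ref{foliation}) and volume-preserving (Theorem \ref{tr=0:0}) on the finite-volume set $\bar{\mathcal C}$, so Poincar\'e recurrence forces the continuous function $\bar e_2(\mu)$ to return to its values along almost every $e_3$-geodesic; an affine function recurs only with zero slope, so $f\mu^2\equiv 0$ and hence $f\equiv 0$. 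In particular $\bar e_1(\mu)=0$, and the brackets $[\bar e_1,\bar e_2]=\mu\,e_3$ and $[\bar e_2,e_3]=\mu\,\bar e_1$ now give $\bar e_1(\bar e_2(\mu))=e_3(\bar e_2(\mu))=0$, so $\nabla(\bar e_2\mu)$ is parallel to $\bar e_2$.

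The hard part will be $\bar e_2(\mu)\equiv 0$, which I expect to be the main obstacle. Writing $h=\bar e_2(\mu)$, I first note that none of the curvature equations (\ref{equ:curvature--1213})--(\ref{equ:curvature--2313}) contains $\bar e_2(a_{21})$, so $h$ is pointwise unconstrained and can only be eliminated globally. With $f\equiv 0$ one computes $\nabla_{\bar e_2}\bar e_2=0$, so the $\bar e_2$-integral curves are unit-speed geodesics, and $\di(\bar e_2)=0$, so the $\bar e_2$-flow preserves volume. Moreover $\nabla\mu=h\,\bar e_2$ and $\nabla h=(\bar e_2 h)\,\bar e_2$ are both parallel to $\bar e_2$, whence $dh\wedge d\mu=0$; thus on the open set $\{h\neq 0\}$ the function $h$ is locally a smooth function $H(\mu)$ of $\mu$. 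Along any $\bar e_2$-geodesic $\eta$ this yields the autonomous equation $(\mu\circ\eta)'=H(\mu\circ\eta)$, so $\mu\circ\eta$ is strictly monotone wherever $h\neq 0$, hence injective, and $\eta$ cannot return to a neighborhood of its starting point.

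I would then play this against Poincar\'e recurrence: if the $\bar e_2$-flow is a complete volume-preserving flow on the finite-volume set $\bar{\mathcal C}$, then almost every point is recurrent, contradicting the strict monotonicity along orbits issuing from $\{h\neq 0\}$; hence $\{h\neq 0\}=\emptyset$, $\nabla\mu=0$, and $\mu$ is a nonzero constant on connected $\bar{\mathcal C}$. The genuine obstacle is precisely the word "complete" here: unlike the $e_3$-flow, whose orbits are trapped in $\mathcal P_1$ by the saturation Lemma \ref{saturation}, an $\bar e_2$-geodesic could a priori run out of $\mathcal P_1$ across $\partial\mathcal P_1=\{\det A_0=0\}$ in finite time, and recurrence fails for an incomplete flow. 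Establishing completeness of the $\bar e_2$-flow on $\bar{\mathcal C}$ (equivalently, that $\bar e_2$-geodesics remain in $\mathcal P_1$), or replacing it with a flux/divergence argument for the divergence-free field $\bar e_2$ on the finite-volume $\bar{\mathcal C}$, is the technical heart of the proof and the step that distinguishes the $\cvc(0)$ case from the invertible $\cvc(-1)$ case of Proposition \ref{mu:constant}, where $\bar e_2(\mu)=0$ fell out of the curvature equation (\ref{equ:curvature--1223}) directly.
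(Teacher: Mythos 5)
Your proposal stalls exactly where you say it does, and the obstacle is genuine: Poincar\'e recurrence for the $\bar e_2$-flow needs a complete, volume-preserving flow on a finite-measure \emph{invariant} set, and nothing confines $\bar e_2$-geodesics to $\bar{\mathcal{C}}$. Lemma \ref{saturation} saturates $\mathcal{P}_1$ only by $e_3$-geodesics; a $\bar e_2$-geodesic may cross $\{\det A_0=0\}$ or reach the isotropic set in finite time, where the adapted frame and $\mu$ cease to exist, so neither your recurrence step nor the monotonicity-vs-recurrence contradiction can be run as stated. The paper never flows transversally: assuming $\mu$ non-constant, it works on regular level sets $\Sigma_r$ of $\mu$ (these are $e_3$-invariant since $e_3(\mu)=0$), rotates to the frame $\{\tilde e_1,\tilde e_2,e_3\}$ normal and tangent to $\Sigma_r$, and finds that the rotation angle obeys $\theta'=-\mu\cos^2\theta$, i.e.\ $\tan\theta(t)=\tan\theta(0)-\mu t$ along $e_3$-geodesics. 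Recurrence of the $e_3$-flow (already complete and volume-preserving by Corollary \ref{foliation} and Theorem \ref{tr=0:0}) to a compact transversal $D$ on which $k=f/\mu\neq 0$ then forces $\theta(\g_x(t_i))\to\pm\pi/2$ along return times, contradicting that $\theta$ is bounded away from $\pm\pi/2$ on $D$. The whole contradiction lives on the one flow you already control; that is the idea your plan is missing.

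The irony is that your first half, which is correct (with $c\equiv 0$ one has $[\bar e_2,e_3]=\mu\bar e_1$, hence $e_3(\bar e_2\mu)=-\mu\,\bar e_1(\mu)=f\mu^2$, constant along $e_3$-geodesics since $e_3(f)=e_3(\mu)=0$, and an affine function can recur only with zero slope), already finishes the lemma if you test its conclusion $f\equiv 0$ against the one curvature equation you never revisit. With $f\equiv g\equiv c\equiv 0$ and $\det A\equiv 0$ (Theorem \ref{tr=0:0}), equation (\ref{equ:curvature--1221}) gives $\lambda\equiv 0$ on $\bar{\mathcal{C}}$, contradicting that $\bar{\mathcal{C}}$ consists of non-isotropic points; hence $\bar{\mathcal{C}}=\emptyset$, the lemma holds vacuously, and you have in fact proved Corollary \ref{A=0} ($\mathcal{P}_1=\emptyset$) directly, bypassing Lemma \ref{cvc(0):f=g=0} as well. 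This is consistent with the paper, whose own proof observes (via the same use of (\ref{equ:curvature--1221})) that $\{f=0\}$ has empty interior: your global $f\equiv 0$ is compatible with that only because the set is empty. So as written the proposal has a real gap at its declared main step; but appending the (\ref{equ:curvature--1221}) observation to your first recurrence argument turns it into a complete and arguably shorter route to the paper's conclusions, with no level-set or angle analysis needed.
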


\begin{proof}
 With respect to the given adapted framing $\{\bar{e}_1, \bar{e}_2, e_3\}$, we have that $a_{11}=a_{22}=a_{12}=c=0$. Moreover, $a_{21}=\mu$ is a non-zero smooth function on $\bar{\mathcal{C}}$ satisfying $e_3(\mu)=0$.  

From (\ref{equ:curvature--1223}) it follows that $g=0$ on $\bar{\mathcal{C}}$.  Consider the function $k=\frac{f}{\mu}$ on $\bar{\mathcal{C}}$.  From (\ref{equ:curvature--2312}) and the fact that $g=0$ it follows that $e_3(f)=0$.  Since additionally $e_3(\mu)=0$, the function $k$ is constant along $e_3$-geodesics.


By way of contradiction we suppose that $\mu$ is not constant on $\bar{\mathcal{C}}$.  We may then find an interval $I\subset \R$ consisting of regular values of $\mu$.  For a regular value $r \in I$, we let $\Sigma_r = \mu^{-1}(r)$ denote the smooth level surface.  Since $e_3(\mu) =0$  the vector field $e_3$ is everywhere tangent to $\Sigma_r$. Therefore, each $\Sigma_r$ is foliated by complete $e_3$-geodesics.   

Consider the subset $$X=\{x \in \bar{C}\, \vert\, k(x)=0\}=\{x \in \bar{C}\, \vert\, f(x)=0\}$$ of $\bar{C}$.  We claim that $X$ has no interior point in $\bar{C}$.  Indeed, otherwise there exists an open subset $O$ of $\bar{C}$ with $O \subset X$.  As $f=0$ on $O$,  (\ref{equ:curvature--1221}) implies that $\l=0$ on $O$, contradicting the fact that $\bar{C}$ consists of non-isotropic points.

As $X$ has no interior points, we may find a closed two dimensional disc $D \subset \mu^{-1}(I)$ transverse to the vector field $e_3$ on which $k$ does not vanish.  For each $x \in D$, let $\g_x$ denote the $e_3$-geodesic with $\g_x(0)=x$ and let $$U=\{\g_x(t)\,\vert\, (x,t) \in D\times \R\}.$$  As $k$ is constant on $e_3$-geodesics, $k$ does not vanish on $U$.  By (\ref{equ:curvature--1213}), we have that $$\bar{e}_1(\mu)=-f \mu=-k\mu^2.$$  Therefore, the vector field $\bar{e}_1$ is not tangent to $\Sigma_r\cap U$ for each $r \in \mu(U)$. 

We construct a new adapted framing $\{ \tilde{e}_1, \tilde{e}_2, e_3 \}$ on $U$ as follows. The vector field $ \tilde{e}_1$ is the unit-normal vector field to the level sets $\Sig_r$ lying on the same side of these level sets as $\bar{e}_1$.  The orientation of $e_3^{\perp}$ then determines the vector field $\tilde{e}_2$ which is necessarily tangent to the level sets $\Sig_r$.  Using that each $\Sig_r$ is a surface,

\begin{equation}
\label{equ:bracket1-zero}
\langle [\tilde{e}_2, e_3] , \tilde{e}_1 \rangle = 0.
\end{equation}

For each $p \in U$ there is a unique angle $\theta(p) \in (-\frac{\pi}{2},\frac{\pi}{2})$, depending smoothly on $p \in U$,  such that $T_{\theta(p)}\in SO(2)$ rotates the subframing $\{\bar{e}_1, \bar{e}_2\}$ of  $e_3^{\perp}$ to the subframing $\{ \tilde{e}_1,  \tilde{e}_2 \}$.  The matrix $\tilde{A}$ is given by (\ref{equ:transform-A}):
$$
\tilde{A}= T A T^{-1} =  \begin{pmatrix} \mu \cos \theta \sin \theta &  -  \mu \sin^2 \theta \\
 \mu \cos^2 \theta & - \mu \cos \theta \sin \theta \end{pmatrix}   \\
$$
In particular $\tilde{a}_{21} =  \mu \cos^2 \theta$ and by (\ref{equ:transform-f-g-c}) $\tilde{c} = e_3(\theta)$.
Then using (\ref{equ:bracket1-zero})
\begin{equation}
 \mu \cos^2 \theta =\tilde{a}_{21}=  \langle \n_{\tilde{e}_2} {e_3}, \tilde{e}_1 \rangle = \langle \n_{{e}_3} {\tilde{e}_2},  \tilde{e}_1 \rangle =-\tilde{c}= -e_3(\theta).
\end{equation}

Let $\g$ be an $e_3$-geodesic  lying in $U$.  We have along $\g$:
\begin{equation}
\label{equ:relating-rho-theta}
-\mu  = \frac{e_3(\theta)}{ \cos^2 \theta} = \frac{\theta'}{ \cos^2 \theta},
\end{equation}
Integrating along $\g$ from $0$ to $t$ and recalling that $\mu$ is a non-zero constant along $\g$ we have:
$$
-\mu t = \int_0^t \frac{\theta'}{ \cos^2 \theta} = \tan \theta(t)-\tan \theta(0).
$$
Hence,
$$
\theta(t) = \tan^{-1}( \tan \theta(0)-\mu t).
$$  

Theorem \ref{tr=0:0} and Poincare recurrence imply that for almost every $x\in D$, there exists a sequence of times $t_i \rightarrow \infty$ such that $\g_x(t_i) \in D.$  By the last equation, $\theta(\g_x(t_i)) \rightarrow \pm \frac{\pi}{2}$.  However, by compactness of $D$, $\theta$ is bounded away from $\pm \frac{\pi}{2}$ on $D$, a contradiction.

\end{proof}

\begin{lem}\label{cvc(0):f=g=0}
For a global adapted framing $\{\bar{e}_1,\bar{e}_2,e_3\}$ of $\bar{\mathcal{C}}$ with respect to which $A$ has form (\ref{0Aform2}), $f \equiv g \equiv 0.$
\end{lem}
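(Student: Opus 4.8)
The plan is to exploit the curvature equations that become available once $A$ is in the normal form (\ref{0Aform2}), where $a_{11}=a_{22}=a_{12}=c=0$, $a_{21}=\mu$ is a nonzero global constant (by Lemma \ref{rho-constant}), and $g\equiv 0$ (which was already observed in the proof of Lemma \ref{rho-constant} from (\ref{equ:curvature--1223})). With these substitutions the remaining content is to kill $f$. First I would record that $g\equiv 0$, so only $f$ needs to be handled, and note from (\ref{equ:curvature--2312}) with $a_{22}=0$, $g=0$ that $e_3(f)=0$; hence $f$ is constant along each $e_3$-geodesic, exactly as in the $\cvc(-1)$ case.

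The key step is to rule out $f$ being nonzero using a dynamical/volume argument parallel to Theorem \ref{frame:-1}. Since $e_3(f)=0$, the function $f$ is a well-defined continuous function that is invariant under the $e_3$-flow $\phi_t$. By Theorem \ref{tr=0:0} this flow preserves volume, so Poincar\'e recurrence applies: for almost every starting point the $e_3$-geodesic returns to any fixed transversal disc $D$ infinitely often. The strategy is therefore to derive a differential relation forcing $f$ to grow (or some associated geometric quantity to drift monotonically) along $e_3$-geodesics whenever $f\ne 0$, contradicting recurrence. Unlike the $\cvc(-1)$ setting, here $f$ itself is \emph{constant} along $\g$, so the growth must come from a different quantity; I expect to use (\ref{equ:curvature--1213}), which with the normal form reads $\bar{e}_1(\mu)=-f\mu$. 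Since $\mu$ is a global constant, $\bar{e}_1(\mu)=0$, and because $\mu\neq 0$ this immediately forces $f\equiv 0$ on $\bar{\mathcal{C}}$, whence $g\equiv 0$ as well.

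Concretely, I would argue as follows. From (\ref{equ:curvature--1223}), using $a_{11}=a_{22}=a_{12}=0$ and $a_{21}=\mu$, one obtains $g\,\tr A=0$ together with the relation giving $g\equiv 0$ directly (this was already established). Then (\ref{equ:curvature--1213}), with $a_{11}=a_{22}=0$, $a_{21}=\mu$, $a_{12}=0$, and $g=0$, simplifies to $\bar{e}_2(0)-\bar{e}_1(\mu)=f\mu$, i.e. $\bar{e}_1(\mu)=-f\mu$. By Lemma \ref{rho-constant}, $\mu$ is a nonzero constant on $\bar{\mathcal{C}}$, so $\bar{e}_1(\mu)=0$, and dividing by $\mu\neq 0$ yields $f\equiv 0$. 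Finally, since $f\equiv 0$, equation (\ref{equ:curvature--2312}) (or the earlier observation) gives $g\equiv 0$ as well, completing the proof.

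The main obstacle I anticipate is the bookkeeping of which curvature equations survive after imposing the normal form: several of the equations (\ref{equ:curvature--1213})--(\ref{equ:curvature--2313}) become trivial or redundant, and the crux is recognizing that (\ref{equ:curvature--1213}) directly relates $\bar{e}_1(\mu)$ to $f$, so that the already-proven constancy of $\mu$ in Lemma \ref{rho-constant} does all the work. Once that relation is isolated, the conclusion is immediate and no delicate dynamical argument is needed for $f$ itself; the hard analysis was front-loaded into establishing that $\mu$ is globally constant.
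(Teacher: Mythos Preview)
Your proposal is correct and takes essentially the same approach as the paper: with $a_{11}=a_{22}=a_{12}=c=0$ and $\mu=a_{21}$ a nonzero constant (from Lemma~\ref{rho-constant}), equations (\ref{equ:curvature--1213}) and (\ref{equ:curvature--1223}) reduce to $\mu f=0$ and $\mu g=0$, giving $f\equiv g\equiv 0$. The only difference is expository---you spend two paragraphs contemplating a dynamical/recurrence argument before arriving at the direct algebraic one, whereas the paper dispatches the lemma in three lines; you should trim the discussion of Poincar\'e recurrence and monotone drift, since (as you yourself observe in the final paragraph) none of it is needed once $\mu$ is known to be constant.
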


\begin{proof}
In this framing, $a_{11}=a_{22}=a_{12}=0$ and $\mu=a_{21}$ is a non-zero constant.  Therefore, (\ref{equ:curvature--1213}) and (\ref{equ:curvature--1223}) reduce to $\mu f=0$ and $\mu g= 0$.  The result follows. 
\end{proof}

\begin{cor}\label{A=0}
${\mathcal P}_1 = \emptyset$ and hence ${\mathcal P} = {\mathcal P}_{2}$
\end{cor}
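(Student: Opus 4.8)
The plan is to argue by contradiction, exploiting the fact that on the relevant cover every Christoffel symbol has already been determined, so that a single curvature equation forces $\lambda$ to vanish on a set where it cannot. Concretely, suppose $\mathcal{P}_1 \neq \emptyset$, choose a connected component $\mathcal{C}$ of $\mathcal{P}_1$, and pass to the double cover $\bar{\mathcal{C}}$ carrying the global adapted framing $\{\bar{e}_1, \bar{e}_2, e_3\}$ in which the matrix $A$ has the normal form (\ref{0Aform2}). In this framing one has $a_{11} = a_{22} = a_{12} = 0$ and $c \equiv 0$; the entry $a_{21} = \mu$ is a nonzero constant by Lemma \ref{rho-constant}; and $f \equiv g \equiv 0$ by Lemma \ref{cvc(0):f=g=0}. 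In particular $\det A = a_{11}a_{22} - a_{12}a_{21} = 0$, consistent with Theorem \ref{tr=0:0}.

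The key step is then to substitute this data into the curvature equation (\ref{equ:curvature--1221}). Every term on its left-hand side vanishes: the derivatives $\bar{e}_1(f)$ and $\bar{e}_2(g)$ vanish because $f$ and $g$ are identically zero, the quadratic terms $f^2 + g^2$ vanish for the same reason, the term $c(a_{12} - a_{21})$ vanishes since $c \equiv 0$, and $\det A = 0$. Hence (\ref{equ:curvature--1221}) reduces to $0 = -\lambda$, so that $\lambda \equiv 0$ on $\bar{\mathcal{C}}$.

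This is the desired contradiction. The cover $\bar{\mathcal{C}}$ lies over a subset of $\mathcal{P}_1 \subset \mathcal{P}$, and $\lambda$ pulls back to $\bar{\mathcal{C}}$ under the covering map; since the isotropic set $\mathcal{I}$ is precisely $\{\lambda = 0\}$, we have $\lambda \neq 0$ throughout $\mathcal{P}$, and therefore throughout $\bar{\mathcal{C}}$. Thus $\bar{\mathcal{C}}$ — and with it the component $\mathcal{C}$ and hence all of $\mathcal{P}_1$ — must be empty. As $\mathcal{P}$ is the disjoint union of $\mathcal{P}_1$ and $\mathcal{P}_2$, this gives $\mathcal{P} = \mathcal{P}_2$. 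I expect no genuine obstacle in this corollary: all the analytic content — namely $\tr A \equiv 0$ and $\det A \equiv 0$, the normalization of the frame to form (\ref{0Aform2}), and the constancy of $\mu$ together with $f \equiv g \equiv 0$ — has already been established in the preceding lemmas, so the argument here is the short bookkeeping step that reads off the contradiction from one curvature identity.
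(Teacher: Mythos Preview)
Your proof is correct and follows the same approach as the paper's own argument: assume $\mathcal{P}_1\neq\emptyset$, pass to the cover $\bar{\mathcal{C}}$ with the adapted framing in form~(\ref{0Aform2}), observe $f=g=c=0$ and $\det A=0$, and read off $\lambda=0$ from equation~(\ref{equ:curvature--1221}) to obtain the contradiction. Your write-up is simply more explicit about the individual terms.
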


\begin{proof}
If $\mathcal{P}_1$ is non-empty, then we may choose a component $\mathcal{C}\subset \mathcal{P}_1$ as above.  In an adapted framing of $\bar{\mathcal{C}}$ which puts $A$ in the form (\ref{0Aform2}) we have $f=g=c=0$ so that by (\ref{equ:curvature--1221}) $\l =\det A = 0$  on $\bar{\mathcal{C}}$.  Therefore, $\l=0$ on $\mathcal{C}$, a contradiction. 
\end{proof}

We conclude with the proof Theorem \ref{maintheorem-case0} from the introduction.

\begin{proof}[Proof of Theorem \ref{maintheorem-case0}]
Let $\mathcal{P}$ denote the set of non-isotropic points.  By standard proofs of de Rham's decomposition theorem, it suffices to prove that the line field on $\mathcal{P}$  spanned by $e_3$ is holonomy invariant.

By Corollary \ref{A=0}, $\mathcal{P}=\mathcal{P}_2$.  By Lemma \ref{type2frame} and Theorem \ref{tr=0:0}, the matrix $A$ is the zero matrix.  Consequently,  $\nabla_{(\cdot)} e_3$ vanishes on $e_3^{\perp}$.  As $e_3$ is also geodesic, $\nabla_{(\cdot)} e_3$ vanishes on all of $T \mathcal{P}$ as required.
\end{proof}

In Theorem \ref{maintheorem-case0}, the local product structure in the subset of non-isotropic points need not arise from a global product structure on the universal covering $\widetilde{M}$.  This is illustrated by examples of non-positively curved graph three-manifolds which have $\cvc(0)$ and irreducible universal covering.  Such examples play a similar role in \cite{guzh} where non-positively curved $n$-manifolds with bounded sectional curvatures and Ricci rank $r<n$ are shown to have a local product decomposition on the open subset of Ricci rank $r$ points.  In contrast, we do not know examples of $\cvc(0)$ three-manifolds of non-negative curvature with an irreducible universal covering.  By the Cheeger-Gromoll splitting theorem, this reduces to the question of whether the three-sphere admits a $\cvc(0)$ metric of non-negative curvature, a problem that we leave open.

\end{document}